\providecommand{\U}[1]{\protect\rule{.1in}{.1in}}
\numberwithin{equation}{section}
\newtheorem{theorem}{Theorem}
\newtheorem{lemma}{Lemma}
\newtheorem{proposition}{Proposition}
\newtheorem{definition}{Definition}
\newtheorem{remark}{Remark}
\newcommand\restr[2]{{
  \left.\kern-\nulldelimiterspace 
  #1 
  \vphantom{\big|} 
  \right|_{#2} 
  }}
\begin{document}

\inputencoding{utf8}

\title[S\MakeLowercase{trong solutions for the} N\MakeLowercase{avier-}S\MakeLowercase{tokes-}V\MakeLowercase{oigt equations with non-negative density}]{S\MakeLowercase{trong solutions for the} N\MakeLowercase{avier-}S\MakeLowercase{tokes-}V\MakeLowercase{oigt equations with non-negative density}}

\author[H.B.~\MakeLowercase{de~}O\MakeLowercase{liveira}, K\MakeLowercase{h}.~K\MakeLowercase{hompysh}, A.G.~S\MakeLowercase{hakir}]{H.B.~\MakeLowercase{de} O\MakeLowercase{liveira}$^{1,2}$, K\MakeLowercase{h}.~K\MakeLowercase{hompysh}$^{3}$, A.G.~S\MakeLowercase{hakir}$^{3}$}

\address{$^{1}$FCT - Universidade do Algarve, Faro, Portugal\\
         $^{2}$CMAFcIO - Universidade de Lisboa, Lisbon, Portugal\\
         $^{3}$Al-Farabi Kazakh National University, Almaty, Kazakhstan}

\email{holivei@ualg.pt,  konat$_{-}$k@mail.ru, ajdossakir@gmail.com}

\selectlanguage{english}

\begin{abstract}

The aim of this work is to study the Navier-Stokes-Voigt equations that govern flows with non-negative density of incompressible fluids with elastic properties.
For the associated nonlinear initial-and boundary-value problem, we prove the global-in-time existence of strong solutions (velocity, density and pressure).
We also establish some other regularity properties of these solutions and find the conditions that guarantee the uniqueness of velocity and density.
The main novelty of this work is the hypothesis that, in some subdomain of space, there may be a vacuum at the initial moment, that is, the possibility of the initial density vanishing in some part of the space domain.

\bigskip

\noindent \textbf{Keywords:}
incompressible Navier-Stokes-Voigt equations; non-negative density; strong solutions; regularity; uniqueness.
\bigskip

\noindent \textbf{MSC (2020):} 35Q35; 76D03; 76A05; 76N10.
\end{abstract}

\date{\today}

\maketitle

\section{Introduction}\label{Sect:int}

In general terms, it can be said that the Navier-Stokes equations describe the evolution of the velocity field of an incompressible viscous fluid in the laminar regime.
It is one of the most important system of equations in mathematical physics and therefore has been widely studied by several authors during the last 120 years, either from the mathematical viewpoint or from the applications -- see \emph{e.g.}~\cite{CGR:2016,CF:1988,Galdi:2011,Joseph:1990,LS:1978}.
However the developed theory is still incomplete, specially in 3d, where general existence and uniqueness results for smooth solutions are still partial.
For this reason, many authors began to study slightly modified models of these equations for which it would be possible to prove the existence and uniqueness of smooth solutions.
Among these models are the so-called Navier-Stokes-Voigt equations for which it is possible to answer the still open questions of the Navier-Stokes equations.
In turn, the main feature of the Navier-Stokes-Voigt equations is that they can be used to model flows of viscous fluids with elastic properties, as for instance polymer solutions.
These materials are part of a wider class of fluids, called viscoelastic fluids, and can
exhibit all intermediate ranges of properties between an elastic solid and a viscous fluid. 
Typically, fluids that exhibit this behavior are macromolecular in nature and the most common examples are polymeric melts
and solutions used to make plastic articles, food products, such as dough used to make bread and pasta, and
biological fluids such as synovial fluids found in joints --  see \emph{e.g.}~\cite{CGR:2016,Joseph:1990}.
In most applications, density is considered a constant parameter. However, this assumption is unrealistic, because in almost all fluids the density varies, either with time or with the position of the space where the fluid element is. These variations may or may not be significant. There are situations in which the density cannot be assumed as a constant parameter as, for example, in the study of multi-phase flows consisting of several immiscible and incompressible fluids.

In this work, we are interested in studying density-dependent flows (nonhomogeneous flows) of incompressible fluids with elastic properties.
We assume the flow is governed by the following initial-and boundary-value problem,
\begin{align}
&
\operatorname{div}u =0 \qquad\mbox{in}\quad Q_T, \label{NSV:inc} \\
& (\rho u)_t + \operatorname{div}(\rho u\otimes u)
= \rho f -\nabla p+ \mu\Delta u +\kappa\,\Delta u_t\qquad\mbox{in}\quad Q_T, \label{NSV:mom} \\
&
\rho_t+\operatorname{div}(\rho u) =0,\quad \rho\geq 0 \qquad\mbox{in}\quad Q_T, \label{NSV:mass}\\
& \rho u =\rho_0u_0,\quad  \rho=\rho_0\qquad\mbox{in}\quad \{0\}\times\Omega, \label{NSV:ic} \\
&
u=0\qquad\mbox{on}\quad \Gamma_T. \label{NSV:u:bc}
\end{align}
Here, $Q_T$ denotes the time-space cylinder $(0,T)\times\Omega$, where $\Omega\subset\mathds{R}^3$ is a bounded domain and $T$ is a given positive constant. The lateral boundary $(0,T)\times\partial\Omega$ of $Q_T$ is denoted by $\Gamma_T$, and the other notation is defined as follows.
The unknowns of the problem are the velocity $u=(u_1,u_2,u_3)$, density $\rho$ and pressure $p$, while the external forces field $f$, initial velocity $u_0$ and initial density $\rho_0$ are given data.
Equations~\eqref{NSV:mom} and~\eqref{NSV:mass} are derived from the classical principles of conservation of momentum and mass, while \eqref{NSV:inc} expresses the incompressibility constraint of the fluid.
The system of equations \eqref{NSV:inc}-\eqref{NSV:mass} shall be denoted in the sequel as the Navier-Stokes-Voigt equations, where $\mu$ corresponds to the dynamic viscosity and $\kappa$  to the relaxation time, that is the characteristic time required for a viscoelastic fluid to relax from a
deformed state to its equilibrium configuration.
With this respect, we assume that they are both constant and such that
\begin{equation}\label{mu:k>0}
\mu>0,\quad \kappa>0.
\end{equation}
We assume the initial momentum, say $m_0$, is given by the product $\rho_0u_0$, where $u_0$ is the given initial velocity.
In this work, we are interested in the case of initial data $u_0$ and $\rho_0$ satisfying
\begin{align}
&
\label{cond:u0}
\operatorname{div}u_0=0\quad \mathrm{in}\quad \Omega, \\
&
\label{cond:r0}
0\leq \rho_0\leq M<\infty\quad \mathrm{in}\quad \Omega,
\end{align}
for some positive constant $M$.
The main novelty of this work lies in the assumption (\ref{cond:r0}), where it is understood that the initial density $\rho_0$ may eventually vanish in some domain $\omega\subset\subset\Omega$, i.e. the possibility that, at the initial moment, there might be vacuum in some part of the space domain.

We strength hypothesis (\ref{cond:u0}) with one of the following conditions,
\begin{alignat}{2}
&
\label{Hyp:u0:V}
u_0\in V, && \\
\label{Hyp:u0:V+H2}
&
u_0\in V\cap H^2(\Omega), &&
\end{alignat}
where $H^2(\Omega)$ is the usual $W^{2,2}(\Omega)$ Sobolev space and $V$ is the function space defined below at (\ref{space:Vs}).
For the definitions and notations of the function spaces used throughout the paper, we address the reader to the monographs~\cite{Galdi:2011,Mazya:2011}.
In particular, given $m\in\mathds{N}$ and $r\in[1,\infty]$, we denote by $L^r(\Omega)$ and $W^{m,r}(\Omega)$ the usual Lebesgue and Sobolev function spaces.
As usual, when $r=2$, we use the notation $H^m(\Omega)=W^{m,2}(\Omega)$.
By $W^{m,r}_0(\Omega)$, we denote the closure of $C^\infty_0(\Omega)$ in the norm of $W^{m,r}(\Omega)$. The dual space of $W^{m,r}_0(\Omega)$ is denoted by $W^{-m,r'}(\Omega)$, where $r'$ denotes the Hölder conjugate of $r$.

Condition (\ref{Hyp:u0:V}) is enough to obtain weak solutions, but to obtain solutions with further regularity condition (\ref{Hyp:u0:V+H2}) is mandatory
(see the authors works~\cite{AO:2022:RACSAM,AOKh:2021:Nonl,AOKh:2019:b}).
On the forcing term $f$, we shall also assume two distinct situations,
\begin{alignat}{2}
&
\label{Hyp:f:L2}
f\in L^2(0,T;L^2(\Omega)), && \\
&
\label{Hyp:f:Linfty}
f\in L^\infty(0,T;L^2(\Omega)). &&
\end{alignat}
Of course that $L^\infty(0,T;L^2(\Omega))\hookrightarrow L^2(0,T;L^2(\Omega))$, unless we consider the entire time interval $(0,\infty)$.
But, as we shall see, condition (\ref{Hyp:f:L2}) is sufficient for the results we aim to prove, being condition (\ref{Hyp:f:Linfty}) only required to get some further $L^\infty$-in time regularity.

The version of the problem (\ref{NSV:inc})-(\ref{NSV:ic}), with constant density, was intensively studied by Oskolkov in a series of works (see \cite{Oskolkov:1995} and references cited there in) who coined the name Kelvin-Voigt for the associated system of equations.
However, as observed by Zvyagin and Turbin~\cite{zvy-2010}, neither Kelvin nor Voigt have suggested any stress-strain relation, or system of governing equations, for viscoelastic fluids.
Currently, the Navier-Stokes-Voigt name for the associated system of equations seems to be the most accepted by the people working in this field, especially because this model is, in fact, an extension of the system of equations proposed by Voigt (for elastic materials that exhibit relaxation time) to model materials with viscoelastic properties.
Mathematically speaking, the interesting feature of this system of equations, as noted first by Ladyzhenskaya~\cite{Ladyzhenskaya:1966}, is that the relaxation term $\kappa\,\Delta u_t$ works as a regularization of the Navier-Stokes equations so that the corresponding problem has a unique global solution.
Since then, the same problem, or some of its variants, have been studied by many authors, in many settings and under different conditions, with respect to the existence, uniqueness and asymptotic behaviour of the solutions. See for instance \cite{AOKh:2021:Nonl,AOKh:2019:c,zvy-2010} and the references cited therein.
On the other hand, the works by Titi and his collaborators~\cite{Titi:2010a,Titi:2010b} make a clear relation between the homogeneous Navier-Stokes-Voigt equations and the turbulence modeling, in particular with Bardina turbulence models.
The same relation was touched on by Lewandowski \emph{et al.}~\cite{Lewandowski:2006,Lewandowski:2018}.
Existence of weak and strong solutions of nonlinear problems governed by the Navier-Stokes-Voigt equations (\ref{NSV:inc})-(\ref{NSV:mom}) and for some of its generalisations, with p-Laplacian diffusion and damping terms, and for nonhomogeneous flows (non-constant density), were studied by the authors in \cite{AOKh:2021:Nonl,AOKh:2019:c} in the case of a strictly positive initial density.
For results on nonhomogeneous flows governed by the incompressible Navier-Stokes equations, which corresponds to take $\kappa=0$ in the momentum equation (\ref{NSV:mom}), we address the reader to the works by Antontsev \emph{et al.}~\cite{AKM:1990} and by Ladyzhenskaya and Solonnikov~\cite{LS:1978} in the case of a strictly positive initial density, and for Simon~\cite{Simon:1990}, Lions~\cite{Lions:1996} and Desjardins~\cite{Desjardins:1997} in the case of a initial density that vanishes in some part of the space domain.
In all these works, the authors were primarily interested in the global-in-time existence of weak solutions in bounded domains of $\mathds{R}^d$, $d=2,\ 3$, or in the whole space $\mathds{R}^d$, $d\geq 2$, and in the uniqueness of solution in the case of $d=2$.
Moreover, the initial data were considered so that $u_0\in H^1(\Omega)$ and $\rho_0\in L^\infty(\Omega)$.
The case of a strictly positive initial density has been worked out also by many other authors during the last 20 years with respect to existence of weak and strong solutions, uniqueness, asymptotic stability and blow-up -- see, for instance, ~\cite{HLL:2021,ZSC:2022} and the references cited therein.
We just want to point out the work by Paicu \emph{et al.} in~\cite{PZZ:2013}, where the authors have proved global-in-time existence and uniqueness in the whole space $\mathds{R}^d$, $d=2,\ 3$, for $u_0\in H^s(\mathds{R}^2)$, for $s>0$, or $u_0\in H^1(\mathds{R}^3)$.
When the density vanishes in some space subdomain, the momentum equation \eqref{NSV:mom} degenerates into an elliptic equation, which makes it difficult to achieve the existence of strong solutions.
To overcome this difficulty, Choe and Kim~\cite{Choe-Kim:2003:a, Choe-Kim:2003:b} estimated $\|\nabla u_t\|_{L^2(\Omega)}$ by requiring that the initial data should satisfy a compatibility condition, expressed by the following Stokes problem,
\begin{alignat}{2}
\label{Stokes:u0n:1}
& \operatorname{div}u_0 =0\quad \mbox{in}\ \ \Omega, && \\
\label{Stokes:u0n:2}
& -\mu\Delta u_0 =\sqrt{\rho_0}g - \nabla p_0\quad \mbox{in}\ \ \Omega, && \\
\label{Stokes:u0n:3}
& u_0 =0\quad \mbox{on}\ \ \partial\Omega, &&
\end{alignat}
for some $p_0\in H^1(\Omega)$ and $g\in L^2(\Omega)$.
This condition, combined with the assumption that $u_0\in V\cap H^2(\Omega)$ and $\rho_0\in L^\infty(\Omega)$, were of the utmost importance to prove the local-in-time existence of strong solutions in bounded domains of $\mathds{R}^3$.
More recently, without requiring the compatibility condition \eqref{Stokes:u0n:1}-\eqref{Stokes:u0n:3}, Lu \emph{et al.}~\cite{LuShiZhong:2018} have proved that, if the initial density decays not too slowly as $|x|\longrightarrow\infty$, then the 2d problem in the whole plane $\mathds{R}^2$ admits a unique global-in-time strong solution. The decay condition on the initial density was written in the following form
\begin{equation}\label{compat:2}
\rho_0\overline{x}^a\in L^1(\Omega)\cap H^1(\Omega)\cap W^{1,q}(\Omega),\qquad \overline{x}:=\sqrt{e+\left|x\right|^2}\log^2\left(e+\left|x\right|^2\right),
\end{equation}
for some $a>1$ and $q>2$ (see~\cite{LuShiZhong:2018}).
Around the same time, Li~\cite{Li:2017} has proved the existence of local-in-time strong solutions in bounded domains of $\mathds{R}^3$, assuming only that $u_0\in V$ and $\rho_0\in L^\infty(\Omega)\cap W^{1,\gamma}(\Omega)$ for any $\gamma>1$. Uniqueness was also proved in~\cite{Li:2017} but for all $\gamma\geq 2$.
More recently, Danchin and Mucha~\cite{DM:2019} improved the results of~\cite{Li:2017}, assuming only that $u_0\in V$ and $\rho_0\in L^\infty(\Omega ) $ (without requiring any condition, be it regularity, strict positivity or compatibility condition that the initial density satisfies). The problem was considered in a spatial domain that can either be a bounded domain of $\mathds{R}^d$, $d=2,\ 3$, with a $C^2$ boundary $\partial\Omega$, or the torus $\mathds{T}^d$, $d=2,\ 3$.
In these conditions, the authors~\cite{DM:2019} have proved global-in-time existence of strong solutions and their uniqueness in 2d, and also in 3d, in the last case only if $\|\nabla u_0\|_{L^2(\Omega)}$ is suitably small.
About 2 years ago, He \emph{et al.}~\cite{HLL:2021} have proved global-in-time existence of strong solutions and its exponential stability in unbounded domains of $\mathds{R}^3$. These authors have considered, in addition, the difficult situation of a density-dependent viscosity.
Last year, Zhang \emph{et al.}~\cite{ZSC:2022} have extended the results of \cite{DM:2019} requiring that the initial velocity can be in a larger function space: $u_0\in H^s_0(\Omega)$ for $s>0$.
Nonhomogeneous flows, with initial vacuum, governed by the incompressible Navier-Stokes-Voigt equations were firstly studied by the authors in \cite{AO:2022:RACSAM}, where it was proved the existence of weak solutions in the whole space $\mathds{R}^d$, $d=2,\ 3,\ 4$.
There it were also proved some properties regarding the large-time behavior of the solutions in special unbounded domains.
In the present, work we are interested in studying the existence and uniqueness of strong solutions for the Navier-Stokes-Voigt problem (\ref{NSV:inc})-(\ref{NSV:u:bc}) with non-constant density.
For the sake of mathematical generality, we shall assume throughout the rest of the work that the space dimension is $d\geq 2$, knowing in advance that there will be restrictions on the upper bound of $d$ depending on the results we shall obtain.
As we shall see in the sequel, the gain in regularity promoted by the presence of the relaxation term $\kappa\,\Delta u_t$ in the momentum equation shall allow us to prove the global-in-time existence, as well the uniqueness, of a strong solution without invoking any extra condition on the initial density.

We are interested in strong solutions to the problem (\ref{NSV:inc})-(\ref{NSV:u:bc}) in the sense of the following definition.

\begin{definition}\label{Def:ws}
Let $d \geq2$ and assume the conditions (\ref{mu:k>0}), (\ref{cond:r0}), (\ref{Hyp:u0:V+H2}) and (\ref{Hyp:f:L2}) are fulfilled.
If all the derivatives of $\rho$, $u$ and $p$ involved in (\ref{NSV:inc})-(\ref{NSV:mass}) are regular distributions and the equations (\ref{NSV:inc})-(\ref{NSV:mass}) hold almost everywhere in ${Q_T}$, and if still $\rho$ and $u$ satisfy the initial and boundary conditions (\ref{NSV:ic})-(\ref{NSV:u:bc}), then the triple $(\rho,u,p)$ is said to be a strong solution of the problem (\ref{NSV:inc})-(\ref{NSV:u:bc}).
\end{definition}

The main results of this work address the issue of existence of strong solutions for the problem (\ref{NSV:inc})-(\ref{NSV:u:bc}).
The first one is given by the following theorem and requires minimal assumptions on the regularity of the boundary domain.

\begin{theorem}\label{thm:e:strong:1}
Let $2\leq d \leq4$ and assume that $\Omega$ is a bounded domain with $\partial\Omega$ Lipschitz-continuous.
If the conditions (\ref{mu:k>0}), (\ref{cond:r0}), (\ref{Hyp:u0:V+H2}) and (\ref{Hyp:f:L2}) are fulfilled, then there exists, at least, a solution $(\rho,u,p)$
for the problem (\ref{NSV:inc})-(\ref{NSV:u:bc}) and such that:
\begin{enumerate}
[topsep=0pt, leftmargin=0pt,itemindent=*,label=(\arabic*)]
\item $0\leq \rho \leq M$ in ${Q_T}$, $\rho\in C([0,T];L^q(\Omega))$ for all $q\geq 1$ and $\rho_t\in L^2(0,T;W^{-1,2}(\Omega))$;
\item $u\in L^\infty(0,T;V)$ and $\sqrt{\rho}u\in L^\infty(0,T;L^2(\Omega))$;
\item $u_t\in L^2(0,T;V)$ and $\sqrt{\rho}u_t\in L^2(0,T;L^2(\Omega))$;
\item $p\in C_w([0,T);L^2(\Omega))$;
\end{enumerate}
If, instead of (\ref{Hyp:f:L2}), is fulfilled (\ref{Hyp:f:Linfty}), then:
\begin{enumerate}
[resume,topsep=0pt,leftmargin=0pt,itemindent=*,label=(\arabic*)]
\item $u_t\in L^\infty(0,T;V)$ and $\sqrt{\rho}u_t\in L^\infty(0,T;L^2(\Omega))$.
\end{enumerate}
\end{theorem}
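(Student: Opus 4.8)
The plan is to remove the possible vacuum by regularizing the initial density and then pass to the limit. For $\delta\in(0,M]$ set $\rho_0^\delta:=\rho_0+\delta$, so that $\delta\le\rho_0^\delta\le 2M$; by the existence theory for the Navier--Stokes--Voigt system with a strictly positive density developed in our earlier works (a Faedo--Galerkin approximation of the velocity coupled with the transport equation for the density) we obtain, for each $\delta$, a solution $(\rho^\delta,u^\delta,p^\delta)$ of \eqref{NSV:inc}--\eqref{NSV:u:bc} with $\rho_0$ replaced by $\rho_0^\delta$, having the regularity of items~(1)--(5). Since $\operatorname{div}u^\delta=0$ and $u^\delta=0$ on $\Gamma_T$, the transport equation \eqref{NSV:mass} gives $\delta\le\rho^\delta\le 2M$ a.e.\ in $Q_T$ and $\|\rho^\delta(t)\|_{L^q(\Omega)}=\|\rho_0^\delta\|_{L^q(\Omega)}$ for all $q\in[1,\infty)$, $t\in[0,T]$. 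The task is then to re-derive the a priori estimates \emph{with constants independent of $\delta$} and to let $\delta\to0$.

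Testing \eqref{NSV:mom} with $u^\delta$ and using \eqref{NSV:inc}--\eqref{NSV:mass} yields the energy identity
\begin{equation*}
\tfrac12\,\tfrac{d}{dt}\Big(\|\sqrt{\rho^\delta}\,u^\delta\|_{L^2(\Omega)}^2+\kappa\,\|\nabla u^\delta\|_{L^2(\Omega)}^2\Big)+\mu\,\|\nabla u^\delta\|_{L^2(\Omega)}^2=\int_\Omega\rho^\delta f\cdot u^\delta\,dx ,
\end{equation*}
whence, by Gronwall's inequality and $\rho^\delta\le 2M$, the quantities $\|\sqrt{\rho^\delta}u^\delta\|_{L^\infty(0,T;L^2(\Omega))}$ and $\|u^\delta\|_{L^\infty(0,T;V)}$ are bounded uniformly in $\delta$ (using only $u_0\in V$ and \eqref{Hyp:f:L2}); this gives~(2). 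The decisive estimate, in which the Voigt term is essential, comes from testing \eqref{NSV:mom} with $u^\delta_t$: integrating by parts and using $\rho_t=-\operatorname{div}(\rho u)$ and $\operatorname{div}u^\delta_t=0$, the two convective contributions combine and one is left with
\begin{equation*}
\|\sqrt{\rho^\delta}\,u^\delta_t\|_{L^2(\Omega)}^2+\kappa\,\|\nabla u^\delta_t\|_{L^2(\Omega)}^2=\int_\Omega\rho^\delta f\cdot u^\delta_t\,dx-\int_\Omega\rho^\delta(u^\delta\cdot\nabla)u^\delta\cdot u^\delta_t\,dx-\mu\int_\Omega\nabla u^\delta:\nabla u^\delta_t\,dx .
\end{equation*}
Here the restriction $2\le d\le4$ enters: bounding the convective integral by $\|\rho^\delta\|_{L^\infty}\|u^\delta\|_{L^4}\|\nabla u^\delta\|_{L^2}\|u^\delta_t\|_{L^4}$ and invoking $H^1_0(\Omega)\hookrightarrow L^4(\Omega)$, together with the bound on $\|\nabla u^\delta\|_{L^\infty(0,T;L^2(\Omega))}$ already obtained, one absorbs $\tfrac\kappa2\|\nabla u^\delta_t\|^2+\tfrac14\|\sqrt{\rho^\delta}u^\delta_t\|^2$ into the left-hand side, and the remaining right-hand side is bounded in $L^1(0,T)$ under \eqref{Hyp:f:L2} and in $L^\infty(0,T)$ under \eqref{Hyp:f:Linfty}. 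This yields, uniformly in $\delta$, $u^\delta_t$ bounded in $L^2(0,T;V)$ and $\sqrt{\rho^\delta}u^\delta_t$ bounded in $L^2(0,T;L^2(\Omega))$ --- giving~(3) --- and the corresponding $L^\infty$-in-time bounds under \eqref{Hyp:f:Linfty} --- giving~(5). From \eqref{NSV:mass} and $\rho^\delta u^\delta\in L^\infty(0,T;L^2(\Omega))$ we get $\rho^\delta_t=-\operatorname{div}(\rho^\delta u^\delta)$ bounded in $L^2(0,T;W^{-1,2}(\Omega))$, and the pressure $p^\delta$ is recovered for a.e.\ $t$ from De Rham's theorem applied to \eqref{NSV:mom}, its time-regularity~(4) following in the usual way from the regularity of the remaining terms.

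For the passage to the limit, the bounds on $u^\delta$ in $L^\infty(0,T;V)$ and on $u^\delta_t$ in $L^2(0,T;V)\hookrightarrow L^2(0,T;L^2(\Omega))$, together with the compact embedding $V\hookrightarrow\hookrightarrow L^q(\Omega)$ for every subcritical exponent $q$ (this is where $d=4$ forces us to work below the critical exponent), give via the Aubin--Lions--Simon lemma a subsequence with $u^\delta\to u$ strongly in $C([0,T];L^q(\Omega))$, $u^\delta\rightharpoonup u$ weakly-$*$ in $L^\infty(0,T;V)$ and $u^\delta_t\rightharpoonup u_t$ weakly in $L^2(0,T;V)$ (weakly-$*$ in $L^\infty(0,T;V)$ under \eqref{Hyp:f:Linfty}). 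For the density, $\rho^\delta$ being bounded in $L^\infty(Q_T)$ and a renormalized solution of \eqref{NSV:mass} driven by the divergence-free field $u^\delta\in L^2(0,T;H^1_0(\Omega))$, the DiPerna--Lions stability theory shows that $\rho$ solves \eqref{NSV:mass}, that $0\le\rho\le M$, and that $\rho^\delta\to\rho$ strongly in $C([0,T];L^q(\Omega))$ for every $q\in[1,\infty)$; this strong convergence is precisely what allows one to identify the nonlinear limits $\rho^\delta u^\delta\otimes u^\delta\to\rho u\otimes u$, $\sqrt{\rho^\delta}u^\delta\to\sqrt{\rho}\,u$ and $\sqrt{\rho^\delta}u^\delta_t\to\sqrt{\rho}\,u_t$, and it delivers the continuity and the bound on $\rho_t$ in~(1). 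One then passes to the limit in the weak formulation of \eqref{NSV:inc}--\eqref{NSV:mass}; the initial conditions \eqref{NSV:ic} are recovered from $\rho^\delta(0)=\rho_0+\delta\to\rho_0$ and $\rho^\delta u^\delta(0)=(\rho_0+\delta)u_0\to\rho_0u_0$ together with the time continuity of $\rho^\delta$ and $\rho^\delta u^\delta$, while \eqref{NSV:u:bc} is preserved because $V$ is weakly closed in $H^1_0(\Omega)$. Lower semicontinuity of the norms transfers the uniform bounds to $(\rho,u,p)$, establishing~(1)--(5).

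The heart of the matter is the second a priori estimate. Without the regularizing term $\kappa\,\Delta u_t$ one could not legitimately test the momentum equation with $u^\delta_t$, and it is precisely the fact that, after integration by parts, $\int_\Omega\rho^\delta_t\,u^\delta\cdot u^\delta_t\,dx$ (arising from $(\rho^\delta u^\delta)_t$) produces a contribution cancelling $-\int_\Omega\rho^\delta u^\delta\otimes u^\delta:\nabla u^\delta_t\,dx$ (arising from $\operatorname{div}(\rho^\delta u^\delta\otimes u^\delta)$), leaving behind only the term $-\int_\Omega\rho^\delta(u^\delta\cdot\nabla)u^\delta\cdot u^\delta_t\,dx$, which is controllable by the energy bound alone as long as $d\le4$. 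The second delicate point, intrinsic to the vacuum hypothesis \eqref{cond:r0}, is the strong compactness of $\rho^\delta$: since no regularity whatsoever is assumed on $\rho_0$, it cannot come from a bound on $\nabla\rho^\delta$ and must instead be obtained from the renormalization and uniqueness theory for the transport equation, which relies in turn on the uniform $L^2(0,T;H^1_0(\Omega))$ control of $u^\delta$ furnished by the energy estimate.
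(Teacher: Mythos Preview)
Your proposal is correct and follows essentially the same strategy as the paper: regularize the initial density so that it is strictly positive, derive the two key a~priori estimates (energy identity and the test with $u_t$, where the Voigt term $\kappa\Delta u_t$ is decisive and the restriction $d\le 4$ enters through the Sobolev embedding $H^1_0\hookrightarrow L^4$), and then pass to the limit using Aubin--Lions for the velocity and the DiPerna--Lions transport theory for the strong compactness of the density. The only structural difference is in the packaging of the approximation: the paper carries out a two-level scheme (mollification $\rho_{0,n}=\eta_n\star\rho_0+\tfrac{1}{n}$, $u_{0,n}=\eta_n\star u_0$, followed by a Faedo--Galerkin approximation in $j$, with successive limits $j\to\infty$ and $n\to\infty$), whereas you collapse this into a single layer $\rho_0^\delta=\rho_0+\delta$ by invoking the existence theory already established in the strictly positive density case. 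Your route is more economical but less self-contained; note only that the cited earlier work may require some smoothness of the initial density at the Galerkin level, so one might in practice need $\rho_0^\delta=\eta_\delta\star\rho_0+\delta$ rather than the bare shift --- this does not affect any of the estimates.
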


It is noteworthy that, in the Navier-Stokes setting (without the relaxation term $\kappa\Delta u_t$ in the momentum equation (\ref{NSV:mom})) for nonhomogeneous flows,
to prove that $u_t\in L^2(0,T;V)$ or $u_t\in L^\infty(0,T;V)$, extra assumptions are needed.
In fact, to prove these results in \cite{Choe-Kim:2003:a}, it was assumed that
 \begin{alignat}{2}
\label{Hyp:ft:L2}
& f_t\in L^2(0,T;L^2(\Omega)), &&  \\
\label{Hyp:f:L2:H1}
& f\in L^2(0,T;H^1(\Omega)). &&
\end{alignat}
Moreover, and in addition to (\ref{Hyp:ft:L2})-(\ref{Hyp:f:L2:H1}), it was required the initial data $u_0$ and $\rho_0$ should satisfy the  compatibility condition \eqref{Stokes:u0n:1}-\eqref{Stokes:u0n:3}.
With respect to assumption (\ref{Hyp:ft:L2}), it should be noted that, by the Sobolev imbedding $W^{1,2}(0,T)\hookrightarrow L^\infty(0,T)$ (see~\cite[Theoreme~VIII.7]{Brezis:1983}), the assumption $f_t\in L^2(0,T;L^2(\Omega))$ (whence $f\in W^{1,2}(0,T;L^2(\Omega)))$ would imply
(\ref{Hyp:f:L2}).

This way, we can realize that for the nonhomogeneous Navier-Stokes-Voigt problem we can prove the same regularity results without assuming the extra conditions (\ref{Hyp:ft:L2})-(\ref{Hyp:f:L2:H1}) on the forcing term $f$, nor requiring the compatibility problem (\ref{Stokes:u0n:1})-(\ref{Stokes:u0n:3}), or condition (\ref{compat:2}).
More importantly, our proof is technically much more accessible and much less time consuming.
This is only possible due to the presence of the relaxation term $\kappa\Delta u_t$ in the momentum equation (\ref{NSV:mom}).
Besides (\ref{est:un:K2})-(\ref{est:un:K2'}), see the proofs of (\ref{est:uj:K2}) and (\ref{est:uj:K2'}) below.

To obtain more regularity in the solutions, not only a smoother boundary domain is needed, but also more restrictions on the space dimension.
This is the aim of the next theorem.

\begin{theorem}\label{thm:e:strong:2}
Let $2\leq d \leq3$ and assume that $\Omega$ is a bounded domain with $\partial\Omega$ supposed to be of class $C^2$.
If the conditions (\ref{mu:k>0}), (\ref{cond:r0}), (\ref{Hyp:u0:V+H2}) and (\ref{Hyp:f:L2}) are fulfilled, then there exists, at least, a solution $(\rho,u,p)$
for the problem (\ref{NSV:inc})-(\ref{NSV:u:bc}) and such that, in addition to (1)-(4) of Theorem~\ref{thm:e:strong:1}, we have:
\begin{enumerate}
[topsep=0pt, leftmargin=0pt,itemindent=*,label=(\arabic*)]
\item $D^2u\in L^\infty(0,T;L^2(\Omega))$ and $D^2u_t\in L^2(0,T;L^2(\Omega))$;
\item $\nabla p\in L^2(0,T;L^2(\Omega))$;
\end{enumerate}
If, instead of (\ref{Hyp:f:L2}), is fulfilled (\ref{Hyp:f:Linfty}), then, in addition to (5) of Theoreom~\ref{thm:e:strong:1}, we have:
\begin{enumerate}
[resume,topsep=0pt,leftmargin=0pt,itemindent=*,label=(\arabic*)]
\item $D^2u_t\in L^\infty(0,T;L^2(\Omega))$;
\item $\nabla p\in L^\infty(0,T;L^2(\Omega))$.
\end{enumerate}
\end{theorem}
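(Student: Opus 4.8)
The plan is to run the Faedo--Galerkin scheme already used for Theorem \ref{thm:e:strong:1}, but now to push the \emph{a priori} estimates one derivative further in space, which requires the $C^2$-regularity of $\partial\Omega$ in order to invoke the standard Stokes/elliptic regularity estimate $\|u\|_{H^2(\Omega)}\le C\big(\|Pu\|_{L^2(\Omega)}+\|u\|_{L^2(\Omega)}\big)$, where $P$ is the Leray projector, and likewise for $u_t$. First I would recall from the proof of Theorem \ref{thm:e:strong:1} the estimates already in hand: $u\in L^\infty(0,T;V)$, $u_t\in L^2(0,T;V)$ (or $L^\infty(0,T;V)$ under \eqref{Hyp:f:Linfty}), $\sqrt\rho u_t\in L^2(0,T;L^2(\Omega))$, together with $0\le\rho\le M$; these are uniform in the Galerkin index $j$. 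The key new step is to rewrite the momentum equation \eqref{NSV:mom}, for the Galerkin approximation $u_j$, in the Stokes form
\begin{equation*}
-\mu\Delta u_j-\kappa\,\Delta (u_j)_t+\nabla p_j=\rho f-\rho (u_j)_t-\rho\,(u_j\cdot\nabla)u_j=:F_j,
\end{equation*}
so that $(u_j,(u_j)_t,p_j)$ solves, at each time $t$, a (generalised) Stokes system with right-hand side $F_j$. Applying elliptic regularity for this system on the $C^2$ domain yields
\begin{equation*}
\mu\|u_j\|_{H^2}+\kappa\|(u_j)_t\|_{H^2}+\|\nabla p_j\|_{L^2}\le C\big(\|F_j\|_{L^2}+\|u_j\|_{L^2}+\|(u_j)_t\|_{L^2}\big),
\end{equation*}
so everything reduces to bounding $\|F_j\|_{L^2}$ in $L^2(0,T)$, resp.\ $L^\infty(0,T)$.

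To estimate $\|F_j(t)\|_{L^2(\Omega)}$ I would use $\|\rho f\|_{L^2}\le\sqrt M\,\|\sqrt\rho f\|_{L^2}\le\sqrt M\,\|f\|_{L^2}$ (bounded in $L^2(0,T)$ under \eqref{Hyp:f:L2}, in $L^\infty(0,T)$ under \eqref{Hyp:f:Linfty}), similarly $\|\rho (u_j)_t\|_{L^2}\le\sqrt M\,\|\sqrt\rho (u_j)_t\|_{L^2}$, which is in $L^2(0,T)$ by item (3) of Theorem \ref{thm:e:strong:1} (and in $L^\infty(0,T)$ by item (5) under \eqref{Hyp:f:Linfty}), and for the convective term $\|\rho\,(u_j\cdot\nabla)u_j\|_{L^2}\le M\|u_j\|_{L^4}\|\nabla u_j\|_{L^4}$, which in dimension $d\le 3$ is controlled, via Gagliardo--Nirenberg and $\|u_j\|_{V}\in L^\infty(0,T)$, by $C\|u_j\|_{V}^{3/2}\|u_j\|_{H^2}^{1/2}$ (this is exactly the place where $d\le3$ is used, and it is why $d=4$ must be excluded here). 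Feeding this back into the elliptic estimate and absorbing the $\|u_j\|_{H^2}^{1/2}$ term by Young's inequality gives
\begin{equation*}
\|u_j(t)\|_{H^2}^2+\|(u_j)_t(t)\|_{H^2}^2+\|\nabla p_j(t)\|_{L^2}^2\le C\Big(\|f(t)\|_{L^2}^2+\|\sqrt\rho (u_j)_t(t)\|_{L^2}^2+\|u_j(t)\|_{V}^6+1\Big),
\end{equation*}
whose right-hand side is bounded in $L^1(0,T)$ under \eqref{Hyp:f:L2} — giving $D^2u_j,\,(u_j)_t\in L^\infty(0,T;L^2)$ after noting that $\|u_j\|_{H^2}$ is obtained pointwise in $t$, $D^2u_j\in$ wait — more carefully: the bound on $\|u_j(t)\|_{H^2}$ holds for a.e.\ $t$ and is in $L^\infty_t$ because $\|u_j(t)\|_V$ and $\|f(t)\|_{L^2}$... actually under \eqref{Hyp:f:L2} only $D^2u_t\in L^2(0,T;L^2)$ and $\nabla p\in L^2(0,T;L^2)$ follow directly, while $D^2u\in L^\infty(0,T;L^2)$ follows by a separate argument: differentiate the elliptic estimate at fixed $t$ using that $\|(u_j)_t\|_{L^2(0,T;H^2)}$ plus $u_j(0)=u_{0j}\in H^2$ gives $u_j\in C([0,T];H^2)$ with $\|u_j\|_{L^\infty(0,T;H^2)}\le\|u_{0j}\|_{H^2}+\int_0^T\|(u_j)_t\|_{H^2}$, bounded by \eqref{Hyp:u0:V+H2}. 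Under the stronger \eqref{Hyp:f:Linfty}, $\|\sqrt\rho (u_j)_t\|_{L^2}$ is in $L^\infty_t$ and $\|u_j\|_V$ in $L^\infty_t$, so the right-hand side above is in $L^\infty(0,T)$, yielding $D^2u_t\in L^\infty(0,T;L^2)$ and $\nabla p\in L^\infty(0,T;L^2)$, i.e.\ items (3) and (4).

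Finally I would pass to the limit $j\to\infty$: the new uniform bounds give, along a subsequence, $u\in L^\infty(0,T;H^2)$, $u_t\in L^2(0,T;H^2)$ (resp.\ $L^\infty$), $\nabla p_j\rightharpoonup\nabla p$ in $L^2(0,T;L^2)$ (resp.\ weak-$*$ in $L^\infty$), and these limits are compatible with the solution already constructed in Theorem \ref{thm:e:strong:1} by uniqueness of weak limits; lower semicontinuity of the norms under weak/weak-$*$ convergence transfers the estimates to $(\rho,u,p)$. The main obstacle I anticipate is making the convective-term estimate rigorous at the Galerkin level without circularity — one must be sure the Gagliardo--Nirenberg interpolation and the absorption by Young are applied to the approximate solution with constants independent of $j$, and that the pressure $p_j$ (recovered from the Galerkin identity via the Stokes problem, not part of the finite-dimensional ODE system) genuinely satisfies the elliptic estimate with a $j$-independent constant; this hinges on the fixed $C^2$ geometry and is the technical heart of the proof.
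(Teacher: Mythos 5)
There is a genuine gap at the central step: the ``elliptic regularity'' estimate you invoke, namely $\mu\|u_j\|_{H^2}+\kappa\|(u_j)_t\|_{H^2}+\|\nabla p_j\|_{L^2}\le C\big(\|F_j\|_{L^2}+\dots\big)$, does not follow from Stokes regularity. At each fixed time $t$ the equation $-\mu\Delta u_j-\kappa\Delta (u_j)_t+\nabla p_j=F_j$ is a Stokes system for the \emph{combination} $\mu u_j+\kappa (u_j)_t$ only, so Lemma~\ref{Lem:reg:St} yields $\big\|\mu u_j(t)+\kappa (u_j)_t(t)\big\|_{H^2}+\|\nabla p_j(t)\|_{L^2}\le C\|F_j(t)\|_{L^2}$ and nothing more; the two summands cannot be separated pointwise in time (the triangle inequality goes the wrong way, and the PDE at time $t$ places no constraint on $u_j(t)$ and $(u_j)_t(t)$ individually beyond their sum). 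Your subsequent Young absorption of the convective term $C\|u_j\|_V^{3/2}\|u_j\|_{H^2}^{1/2}$ into the left-hand side also presupposes that $\|u_j(t)\|_{H^2}^2$ sits on the left at fixed $t$, which it does not under the correct estimate. Consequently the bound you need first, $D^2(u_j)_t\in L^2(0,T;L^2)$, is never actually established, and your (otherwise sound) fundamental-theorem-of-calculus argument $\|u_j(t)\|_{H^2}\le\|u_{0,j}\|_{H^2}+\int_0^t\|(u_j)_t\|_{H^2}\,ds$ has nothing to feed on; the argument is circular as it stands.

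The decoupling must use the time structure, and this is exactly what the paper does. It first proves $\sup_t\|D^2u^j_n\|_{L^2}\le K_3$ independently, by testing (\ref{eq:weak:u:j:nc:s}) with $\mathbb{A}(u^j_n(t))$ and using (\ref{Stokes:est}) (estimate (\ref{est:uj:K3})). Then, for the second-order bounds on $u_t$ and the pressure, it applies the Stokes estimate (\ref{Stokes:est:r:w}) to $w=u^j_n+\tfrac{\kappa}{\mu}\,\partial_t u^j_n$, $p=p^j_n$, solving (\ref{St:inc:n})--(\ref{St:bc:n}), and expands $\|D^2w\|_{L^2}^2$ so that the cross term becomes $\tfrac{\kappa}{\mu}\frac{d}{dt}\|D^2u^j_n\|_{L^2}^2$, see (\ref{Equ:D2}); integrating in time makes this term telescope against $\|D^2u^j_{0,n}\|_{L^2}^2$ (here (\ref{Hyp:u0:V+H2}) enters), and the convective term is absorbed, after time integration, into $\mu^2\int_0^T\|D^2u^j_n\|^2$ on the left, giving (\ref{est:uj:K4}) and (\ref{est:uj:K5:p}). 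For the $L^\infty$-in-time conclusions (3)--(4), the cross term is instead moved to the right and controlled by the already proven $\sup_t$ bounds (\ref{est:uj:K3}) and (\ref{est:uj:K2'}), yielding (\ref{est:uj:K4'}) and (\ref{est:uj:K6:p}). If you replace your separate elliptic estimate by this combined-field device (or, alternatively, by testing with $\mathbb{A}(\partial_t u^j_n)$ as in Remark~\ref{rem:K4'}), the rest of your outline — estimating $F_j$ via $0\le\rho\le M^\ast$, the $d\le 3$ interpolation of the convective term, and the final weak/weak-$\ast$ passage $j\to\infty$, $n\to\infty$ with lower semicontinuity — matches the paper's proof.
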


The greatest gain in regularity, relatively to the nonhomogeneous Navier-Stokes equations (see again~\cite{Choe-Kim:2003:a,LuShiZhong:2018}), is observed in the regularity results $D^2u_t\in L^2(0,T;L^2(\Omega))$ and $D^2u_t\in L^\infty(0,T;L^2(\Omega))$ (see (\ref{est:uj:K4}) and (\ref{est:uj:K4'}) below), which cannot at all be achieved if we remove the relaxation term $\kappa\Delta u_t$ from the momentum equation
(\ref{NSV:mom}).

The proofs of Theorems~\ref{thm:e:strong:1} and~\ref{thm:e:strong:2} will appear as a consequence of what is done later on in Sections~\ref{Sect:EWS:AP}-\ref{Sect:lim:n}.

\begin{remark}\label{C-alpha:reg:en}
In addition to the regularity results $D^2u\in L^\infty(0,T;L^2(\Omega))$ and $D^2u_t\in L^\infty(0,T;L^2(\Omega))$, we also have
\begin{equation*}
u,\ \frac{\partial u}{\partial t}\in L^\infty(0,T;C^{0,\alpha}(\overline{\Omega}))\quad \mbox{whenever}\quad
0<\alpha\leq 2-\frac{d}{2}\quad\mbox{and}\quad 2\leq d\leq 3.
\end{equation*}
This can be shown by combining the Sobolev inequalities (\ref{Sob:ineq:Lap:D2}) and (\ref{Sob:ineq:infty:Lap}) with the regularity results aforementioned.
See also Remark~\ref{C-alpha:reg} below.
\end{remark}

As a complementary result to the existence of strong solutions $(\rho,u,p)$, we provide, in the following theorem, the conditions that allow us to prove the uniqueness of $\rho$ and $u$.

\begin{theorem}\label{thm:u:strong}
Let $(\hat{u},\hat{p},\hat{\rho})$ and $(\overline{u},\overline{p},\overline{\rho})$ be two solutions of the problem (\ref{NSV:inc})-(\ref{NSV:u:bc}) with the same data and in the conditions of Theorem~\ref{thm:e:strong:2}.
If, in addition,
\begin{equation}\label{hyp:gra:ro}
\rho_{0}\in W^{1,\infty}(\Omega)
\end{equation}
and
\begin{alignat}{2}
& \label{eq:th1}
u_{0}\in W^{2,r}(\Omega)\cap V, && \\
& \label{eq:th11}
f\in L^{2}\left(0,T;L^{r}(\Omega)\right), &&
\end{alignat}
for
\begin{equation}
\label{main:a:uniq}
d<r\leq2^\ast.
\end{equation}
then $\hat{\rho}=\overline{\rho}$ and $\hat{u}=\overline{u}$.
\end{theorem}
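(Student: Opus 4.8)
The plan is to establish uniqueness by a Grönwall estimate for a suitable energy of the difference of the two solutions, the crucial point being that the Voigt term $\kappa\Delta u_t$ makes this energy coercive in $H^1_0(\Omega)$ regardless of where the density vanishes. Write $\delta u:=\hat u-\overline u$, $\delta\rho:=\hat\rho-\overline\rho$, $\delta p:=\hat p-\overline p$. By Theorem~\ref{thm:e:strong:2} one has $\hat u,\overline u\in W^{1,2}(0,T;V)\hookrightarrow C([0,T];V)$ and $\hat\rho,\overline\rho\in C([0,T];L^2(\Omega))$, so, the two solutions sharing the same data, $\delta u(0)=0$ and $\delta\rho(0)=0$. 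Using $\operatorname{div}u=0$ and the continuity equation to rewrite \eqref{NSV:mom} in the non-conservative form $\rho(u_t+u\cdot\nabla u)=\mu\Delta u-\nabla p+\kappa\Delta u_t+\rho f$ and subtracting the two resulting identities, one gets
\[
\hat\rho\,\partial_t\delta u-\kappa\Delta\partial_t\delta u=\mu\Delta\delta u-\nabla\delta p+\delta\rho\,(f-\overline A)-\hat\rho\,(\delta u\cdot\nabla\hat u)-\hat\rho\,(\overline u\cdot\nabla\delta u),
\]
\[
\partial_t\delta\rho+\hat u\cdot\nabla\delta\rho+\delta u\cdot\nabla\overline\rho=0,\qquad\operatorname{div}\delta u=0,
\]
where $\overline A:=\partial_t\overline u+\overline u\cdot\nabla\overline u$ is the material acceleration of the second solution.

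Before the energy estimate I would upgrade the regularity of each solution by means of the stronger hypotheses \eqref{hyp:gra:ro}--\eqref{main:a:uniq}: reading \eqref{NSV:mom} as a stationary Stokes system for $(u_t,p)$ with right-hand side $F:=\mu\Delta u-\rho(u_t+u\cdot\nabla u)+\rho f$, the $L^r$-regularity of the Stokes operator on the $C^2$ domain $\Omega$ gives $\|D^2u_t\|_{L^r}+\|\nabla p\|_{L^r}\le C\|F\|_{L^r}$; since $2\le d\le3$ the Sobolev embeddings $H^2(\Omega)\hookrightarrow L^\infty(\Omega)$ and $H^1(\Omega)\hookrightarrow L^{2^\ast}(\Omega)$ place the lower-order part of $F$ in $L^2(0,T;L^r(\Omega))$ (this is where $r\le2^\ast$ enters, to have $\nabla u\in L^\infty(0,T;L^r)$), while the term $\mu\Delta u$ is absorbed by a Grönwall bootstrap for the coupled pair $\big(\|D^2u(t)\|_{L^r},\|D^2u_t(t)\|_{L^r}\big)$ using $u(t)=u_0+\int_0^t\partial_\tau u\,d\tau$ and $u_0\in W^{2,r}(\Omega)$. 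This yields $u\in L^\infty(0,T;W^{2,r}(\Omega))$ and $u_t\in L^2(0,T;W^{2,r}(\Omega))$; as $r>d$ we get $W^{2,r}(\Omega)\hookrightarrow W^{1,\infty}(\Omega)$, hence $\nabla u\in L^\infty(Q_T)$, and then the transport equation $\rho_t+u\cdot\nabla\rho=0$ with $\rho_0\in W^{1,\infty}(\Omega)$ propagates $\nabla\rho\in L^\infty(Q_T)$ — for both solutions.

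Now set $E(t):=\tfrac12\int_\Omega\hat\rho\,|\delta u|^2\,dx+\tfrac\kappa2\|\nabla\delta u\|_{L^2}^2+\tfrac12\|\delta\rho\|_{L^2}^2$, so $E(0)=0$. Testing the $\delta u$-equation with $\delta u$ and the $\delta\rho$-equation with $\delta\rho$, using $\operatorname{div}\delta u=0$ (which annihilates the pressure term), $\hat\rho_t=-\hat u\cdot\nabla\hat\rho$ in the inertial term $\int\hat\rho\,\partial_t\delta u\cdot\delta u$, and $\int_\Omega\hat u\cdot\nabla|\delta\rho|^2=0$, one arrives at
\[
\frac{d}{dt}E+\mu\|\nabla\delta u\|_{L^2}^2=\int_\Omega\delta\rho\,(f-\overline A)\cdot\delta u\,dx-\int_\Omega\hat\rho\,(\delta u\cdot\nabla\hat u)\cdot\delta u\,dx+\int_\Omega\hat\rho\,(\delta u\cdot\nabla\delta u)\cdot\delta u\,dx-\int_\Omega(\delta u\cdot\nabla\overline\rho)\,\delta\rho\,dx.
\]
The decisive point is that $\delta u\in V\subset H^1_0(\Omega)$, so the Poincaré and Sobolev inequalities bound $\|\delta u\|_{L^2}$ and $\|\delta u\|_{L^{2^\ast}}$ by $\|\nabla\delta u\|_{L^2}$: the energy $E$ controls the full $H^1_0$-norm of $\delta u$ even on the vacuum set $\{\hat\rho=0\}$. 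Consequently the second term is $\le M\|\nabla\hat u\|_{L^\infty}\|\delta u\|_{L^2}^2\le C\|\nabla\hat u\|_{L^\infty}E$; the fourth is $\le\|\nabla\overline\rho\|_{L^\infty}\|\delta u\|_{L^2}\|\delta\rho\|_{L^2}\le C\|\nabla\overline\rho\|_{L^\infty}E$; the third is $\le M\|\delta u\|_{L^4}^2\|\nabla\delta u\|_{L^2}\le C\|\nabla\delta u\|_{L^2}^3\le C\big(\sup_{[0,T]}\|\nabla\delta u\|_{L^2}\big)\|\nabla\delta u\|_{L^2}^2\le C'E$, the supremum being finite since $\hat u,\overline u\in L^\infty(0,T;V)$; and the first is $\le\|\delta\rho\|_{L^2}\|f-\overline A\|_{L^r}\|\delta u\|_{L^s}\le C\|f-\overline A\|_{L^r}E$ with $s=\tfrac{2r}{r-2}\le2^\ast$ (once more because $r>d$), where $f-\overline A\in L^2(0,T;L^r(\Omega))$ thanks to $f\in L^2(0,T;L^r)$, $\overline u_t\in L^2(0,T;H^2)\hookrightarrow L^2(0,T;L^r)$ and $\overline u\cdot\nabla\overline u\in L^\infty(0,T;L^r)$. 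Therefore $\tfrac{d}{dt}E\le C(t)E$ with
\[
C(t)=C\big(1+\|\nabla\hat u(t)\|_{L^\infty}+\|\nabla\overline\rho(t)\|_{L^\infty}+\|(f-\overline A)(t)\|_{L^r}\big)\in L^1(0,T),
\]
so Grönwall's lemma and $E(0)=0$ force $E\equiv0$; hence $\nabla\delta u\equiv0$, whence $\delta u\equiv0$ by Poincaré, and $\delta\rho\equiv0$, i.e. $\hat u=\overline u$ and $\hat\rho=\overline\rho$ (and then $\nabla\delta p\equiv0$).

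The main obstacle — and the reason this needs to be proved separately — is the degeneracy of \eqref{NSV:mom} on the vacuum set: the inertial term only produces control of $\|\sqrt{\hat\rho}\,\delta u\|_{L^2}$, which carries no information where $\hat\rho=0$, so the bare Navier--Stokes energy method does not close; this is exactly why, for $\kappa=0$, Choe--Kim and others had to impose the compatibility condition \eqref{Stokes:u0n:1}--\eqref{Stokes:u0n:3}. The relaxation term $\kappa\Delta u_t$ removes this obstruction for free, contributing $\tfrac\kappa2\tfrac{d}{dt}\|\nabla\delta u\|_{L^2}^2$ to the energy identity and thereby restoring $H^1_0$-coercivity of the difference. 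What remains is bookkeeping: closing the coupled Grönwall bootstrap for $\big(\|D^2u\|_{L^r},\|D^2u_t\|_{L^r}\big)$ in the regularity upgrade, and checking that the coefficient $C(t)$ above lies in $L^1(0,T)$ — which is precisely where one spends $\overline u_t\in L^2(0,T;H^2)$ from Theorem~\ref{thm:e:strong:2} and the constraint $d<r\le2^\ast$.
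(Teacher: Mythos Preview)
Your argument is correct and follows essentially the same route as the paper: derive the difference system, upgrade regularity to $u\in L^\infty(0,T;W^{2,r})\hookrightarrow L^\infty(0,T;W^{1,\infty})$ and $\nabla\rho\in L^\infty(Q_T)$ via Stokes $L^r$-estimates and transport (the paper packages this as Proposition~\ref{th:A1}, citing \cite{AOKh:2021:Nonl}), then run a Gr\"onwall argument on the energy $\tfrac12\|\sqrt{\hat\rho}\,\delta u\|_{L^2}^2+\tfrac{\kappa}{2}\|\nabla\delta u\|_{L^2}^2+\tfrac12\|\delta\rho\|_{L^2}^2$. The only cosmetic difference is your convective split $\hat\rho(\delta u\cdot\nabla)\hat u+\hat\rho(\overline u\cdot\nabla)\delta u$ versus the paper's $\hat\rho(\hat u\cdot\nabla)\delta u+\hat\rho(\delta u\cdot\nabla)\overline u$; the paper's choice makes $\hat\rho(\hat u\cdot\nabla)\delta u$ cancel exactly against the $\hat\rho_t$ contribution, so no cubic remainder appears, whereas your split produces the extra term $\int\hat\rho(\delta u\cdot\nabla\delta u)\cdot\delta u$, which you then dispatch with the crude bound $C\sup_t\|\nabla\delta u\|_{L^2}\cdot\|\nabla\delta u\|_{L^2}^2$---harmless, but avoidable.
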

Here, $2^\ast$ denotes the Sobolev conjugate of $2$, and note that (\ref{main:a:uniq}) implies $2\leq d\leq 3$.

The rest of the paper is organised as follows.
In Section~\ref{Sect:AR}, we provide important auxiliary results that will be used throughout the following sections.
Problem (\ref{NSV:inc})-(\ref{NSV:u:bc}) is approximated by two cascade of problems, being Sections~\ref{Sect:EWS:AP}-\ref{Sect:lim:j} devoted to proving the existence of the Galerkin approximations for the second cascade of approximate problems.
Sections~\ref{Sect:apriori:j}-\ref{Sect:lim:j} also prove some results that make the solutions of the two cascade of approximate problems strong.
In Section~\ref{Sect:lim:n}, we prove the existence of strong solutions for the original problem (\ref{NSV:inc})-(\ref{NSV:u:bc}).
Further regularity results are proved in Section~\ref{Sect:unique}, where we also establish the uniqueness of the velocity and density under additional assumptions on the problem data.

\section{Auxiliary results}\label{Sect:AR}
In this section, we introduce important auxiliary results
that we will be used in the course of our work.
We recall the definition of the following function spaces,
\begin{align}
&  \mathcal{V} :=\{u \in C^{\infty}_0(\Omega):\operatorname{div}u =0\}, \nonumber \\
&  H:=\mbox{closure of $\mathcal{V}$ in the norm of $L^2(\Omega)$}, \nonumber \\
&  V_q:=\mbox{closure of $\mathcal{V}$ in the norm of $W^{1,q}(\Omega)$}. \label{space:Vs}
\end{align}
The particular case of $q=2$ in (\ref{space:Vs}) will be denoted only by $V$.
In the sequel, we shall denote the inclusion $X\subset Y$
of two Banach spaces $X$ and $Y$ with a continuous imbedding
$X\xrightarrow[]{}Y$ by $X\hookrightarrow Y$. If the imbedding
$X\xrightarrow[]{}Y$ is compact, we denote the inclusion $X\subset Y$
by $X\hookrightarrow\hookrightarrow Y$.

We start by recalling the Sobolev, Moser and Morrey inequalities and the continuous and compact imbeddings that come with them.
\begin{lemma}\label{lem:Sob:in:1}
Let $\Omega$ be a bounded domain in $\mathds{R}^d$ with a Lipschitz-continuous boundary $\partial\Omega$.
If $v\in W^{1,r}_0(\Omega)$, then
\begin{alignat}{2}
\label{Sob:ineq:gen}
& \left\|v\right\|_{L^{r^\ast}(\Omega)}\leq C(r,d)\left\|\nabla v\right\|_{L^{r}(\Omega)},\quad r^\ast=\frac{dr}{d-r},\quad 1\leq r<d, && \\
\label{Moser:ineq:gen}
& \left\|v\right\|_{L^{q}(\Omega)}\leq C(r,q,d)\left\|\nabla v\right\|_{W^{1,r}(\Omega)},\quad d\leq q<\infty,\quad r=d, &&  \\
\nonumber 
& \left[v\right]_{C^{0,\alpha}(\overline{\Omega})}\leq C(r,d)\left\|\nabla v\right\|_{L^{r}(\Omega)},\quad 0<\alpha\leq1-\frac{d}{r},\quad r>d. &&
\end{alignat}
Moreover, $W^{1,r}(\Omega)\hookrightarrow L^q(\Omega)$ if $1\leq q\leq r^\ast$ and $1\leq r<d$,
$W^{1,r}(\Omega)\hookrightarrow L^q(\Omega)$ if $d\leq q<\infty$ and $r=d$, and
$W^{1,r}(\Omega)\hookrightarrow C^{0,\alpha}(\overline{\Omega})$ if $\alpha=1-\frac{r}{d}$ and $r>d$.
In addition, $W^{1,r}(\Omega)\hookrightarrow\hookrightarrow L^q(\Omega)$ if $1\leq q< r^\ast$ and $1\leq r<d$,
$W^{1,r}(\Omega)\hookrightarrow\hookrightarrow L^q(\Omega)$ if $1\leq q<\infty$ and $r=d$, and
$W^{1,r}(\Omega)\hookrightarrow\hookrightarrow C^{0,\alpha}(\overline{\Omega})$ if $0<\alpha\leq1-\frac{d}{r}$ and $r>d$.
\end{lemma}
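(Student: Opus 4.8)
The plan is to reduce everything to the classical Gagliardo--Nirenberg--Sobolev machinery together with the Rellich--Kondrachov theorem, exploiting two structural features of the statement. First, the inequalities \eqref{Sob:ineq:gen}, \eqref{Moser:ineq:gen} and the Morrey inequality are asserted only for $v\in W^{1,r}_0(\Omega)$, so extension by zero produces $\tilde v\in W^{1,r}(\mathds{R}^d)$ with $\|\nabla\tilde v\|_{L^r(\mathds{R}^d)}=\|\nabla v\|_{L^r(\Omega)}$ and the corresponding whole-space inequalities apply without any boundary hypothesis. Second, the continuous and compact embeddings are stated for the full space $W^{1,r}(\Omega)$ (no vanishing trace), and it is exactly here that the Lipschitz regularity of $\partial\Omega$ enters, through a bounded linear extension operator $E\colon W^{1,r}(\Omega)\to W^{1,r}(\mathds{R}^d)$.

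First I would establish the endpoint case $r=1$. For $w\in C^1_0(\mathds{R}^d)$ one writes $|w(x)|\le\int_{-\infty}^{\infty}|\partial_i w|\,dx_i$ for each $i$, multiplies the $d$ resulting bounds, integrates successively, and applies the generalized H\"older (Gagliardo) inequality to obtain $\|w\|_{L^{d/(d-1)}(\mathds{R}^d)}\le\prod_{i}\|\partial_i w\|_{L^1}^{1/d}\le C\|\nabla w\|_{L^1(\mathds{R}^d)}$, which passes to $W^{1,1}_0$ by density. Next comes the power trick: for $1\le r<d$ I would apply the $L^1$ inequality to $w=|v|^{\gamma}$ with $\gamma=\frac{(d-1)r}{d-r}>1$, so that $\nabla w=\gamma|v|^{\gamma-1}\nabla v$, and then bound the right-hand side by H\"older with exponents $r'$ and $r$; both exponents conspire to produce $\|v\|_{L^{r^\ast}}$, so that after dividing by the common factor $\|v\|_{L^{r^\ast}}^{\gamma-1}$ one arrives at $\|v\|_{L^{r^\ast}}\le C(r,d)\|\nabla v\|_{L^r}$, which is \eqref{Sob:ineq:gen}. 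The borderline case $r=d$ in \eqref{Moser:ineq:gen} I would obtain by iterating the same power trick (equivalently, letting $r\uparrow d$ in the subcritical estimate), yielding membership in every $L^q$ with $q<\infty$; the extra derivative appearing on the right-hand side is what keeps the constant finite across the range of $q$. For $r>d$, Morrey's inequality follows from the mean-oscillation bound $\frac{1}{|B|}\int_B|w-w_B|\le C\int_B\frac{|\nabla w|}{|x-y|^{d-1}}\,dy$, which after splitting into near and far contributions gives H\"older continuity with exponent $\alpha=1-\frac{d}{r}$.

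I would then deduce the embeddings. Since $\Omega$ is bounded, $L^{r^\ast}(\Omega)\hookrightarrow L^q(\Omega)$ for every $1\le q\le r^\ast$, so the subcritical continuous embeddings follow at once from the critical one. The full-space statements $W^{1,r}(\Omega)\hookrightarrow\cdots$ (without zero trace) follow by composing with the extension operator $E$, multiplying by a cutoff, and invoking the whole-space inequalities just proved. For the compact embeddings in the subcritical range $q<r^\ast$ I would use the Fr\'echet--Kolmogorov criterion: boundedness in $W^{1,r}$ controls the $L^r$-modulus of translations of $Ev$, giving $L^r$-precompactness on $\Omega$, and interpolation between $L^r$ and $L^{r^\ast}$ then upgrades this to $L^q$-precompactness for $q<r^\ast$. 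In the H\"older range $r>d$, the compactness $W^{1,r}(\Omega)\hookrightarrow\hookrightarrow C^{0,\alpha}(\overline{\Omega})$ for $\alpha<1-\frac{d}{r}$ is a direct consequence of Arzel\`a--Ascoli, since bounded sets are uniformly bounded and equicontinuous with the strictly larger exponent $1-\frac{d}{r}$.

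The main obstacle is not the inequalities themselves, which are classical, but the careful bookkeeping at the two endpoints $r=d$ and $r>d$, and above all the construction of the extension operator under the sole assumption that $\partial\Omega$ is Lipschitz. The latter requires a partition of unity subordinate to boundary charts, local flattening of each Lipschitz graph, and reflection across the flattened boundary, with the Lipschitz constant of the chart being precisely what guarantees boundedness of the operator on $W^{1,r}$. For this reason I would quote the extension theorem from the monographs already cited (\cite{Galdi:2011,Mazya:2011}) rather than reproduce it, and concentrate the written argument on the scaling and power-trick computations and on the interpolation step that transfers compactness from the critical to the subcritical exponents.
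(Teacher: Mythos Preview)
Your sketch is correct and follows the standard textbook route (Gagliardo's $L^1$ inequality, the power trick, Morrey's estimate, Stein/Calder\'on extension for Lipschitz domains, and Rellich--Kondrachov via Fr\'echet--Kolmogorov plus interpolation). The paper, however, does not prove this lemma at all: its entire proof is a one-line citation to Maz'ya~\cite[Chapter~2]{Mazya:2011}. So you have supplied genuine content where the paper simply defers to the literature; what you wrote is essentially an outline of the arguments one would find in that reference, and there is no discrepancy in approach to discuss beyond the fact that you actually carried something out.
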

\begin{proof}
For the proof, we address the reader to Maz'ya~\cite[Chapter~2]{Mazya:2011}.
\end{proof}

For the sake of simplifying the writing, in the sequel, we shall use the notation $r^\ast$ with the broadest meaning that $r^\ast=\frac{rp}{d-r}$ if $r<d$, $r^\ast$ is any real in the interval $[1,\infty)$ if $r=d$, or $r^\ast=\infty$ if $r>d$.

In the following lemma, we collect two important generalizations of the Aubin-Lions compactness lemma.
\begin{lemma}\label{lem:AL}
If $X$, $E$ and $Y$ are Banach spaces such that $X\hookrightarrow\hookrightarrow E\hookrightarrow Y$, then
\begin{alignat}{2}
\label{Aubin:1} & L^r(0,T;X)\cap \left\{v:v_t\in L^1(0,T;Y)\right\}\hookrightarrow\hookrightarrow L^r(0,T;E)\ \ \mbox{ if } \ \ 1\leq r\leq \infty, && \\
\label{Aubin:2} & L^\infty(0,T;X)\cap \left\{v:v_t\in L^q(0,T;Y)\right\}\hookrightarrow\hookrightarrow C([0,T];E)\ \ \mbox{ if } \ \ 1< q\leq \infty. &&
\end{alignat}
\end{lemma}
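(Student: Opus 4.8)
The plan is to establish both compact embeddings by the classical Aubin--Lions--Simon machinery, whose three ingredients are: an Ehrling-type interpolation inequality that encodes the compactness of $X\hookrightarrow\hookrightarrow E$, a uniform control of the time-translates supplied by the bound on the distributional time derivative, and a vector-valued compactness criterion of Fréchet--Kolmogorov--Riesz type for \eqref{Aubin:1} and of Arzelà--Ascoli type for \eqref{Aubin:2}. (Both statements are in fact the compactness theorems of Simon, so one legitimate route is a direct citation; below I outline the self-contained argument.) First I would record the \emph{Ehrling inequality}: for every $\eta>0$ there is $C_\eta>0$ with $\|w\|_E\leq\eta\|w\|_X+C_\eta\|w\|_Y$ for all $w\in X$. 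This follows by the usual contradiction argument: were it false there would be $w_n$ with $\|w_n\|_E=1$ but $\|w_n\|_E>\eta\|w_n\|_X+n\|w_n\|_Y$, so $(w_n)$ is bounded in $X$ and $\|w_n\|_Y\to0$; compactness of $X\hookrightarrow\hookrightarrow E$ extracts an $E$-convergent subsequence, whose limit must vanish because $\|w_n\|_Y\to0$ and $E\hookrightarrow Y$, contradicting $\|w_n\|_E=1$. I would also record the identity $v(t+h)-v(t)=\int_t^{t+h}v_t(s)\,ds$ in $Y$, valid whenever $v_t\in L^1(0,T;Y)$, whence $\|v(t+h)-v(t)\|_Y\leq\int_t^{t+h}\|v_t(s)\|_Y\,ds$.

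For \eqref{Aubin:1} I would take a set $F$ bounded in $L^r(0,T;X)$ with $\{v_t:v\in F\}$ bounded in $L^1(0,T;Y)$, and verify the two conditions for relative compactness in $L^r(0,T;E)$. The local-average condition is immediate: for $0<t_1<t_2<T$, Hölder's inequality gives $\int_{t_1}^{t_2}v(t)\,dt$ bounded in $X$ uniformly over $F$, hence relatively compact in $E$ by $X\hookrightarrow\hookrightarrow E$. For the translate condition, applying the Ehrling inequality to $\tau_h v-v$, where $\tau_h v:=v(\cdot+h)$, and taking $L^r$-norms in time gives
\[
\|\tau_h v-v\|_{L^r(0,T-h;E)}\leq 2\eta\,\|v\|_{L^r(0,T;X)}+C_\eta\,\|\tau_h v-v\|_{L^r(0,T-h;Y)}.
\]
Given $\E>0$, choosing $\eta$ small makes the first term $<\E/2$ uniformly over $F$, while the $Y$-translate estimate above, combined with the $L^1$-bound on $v_t$, drives the second term below $\E/2$ for $h$ small, again uniformly over $F$. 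The endpoint $r=\infty$ is handled by the same estimate, the uniform smallness of the translates upgrading the convergence to the continuity-in-time conclusion.

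For \eqref{Aubin:2} the conclusion is relative compactness in $C([0,T];E)$, which I would obtain from Arzelà--Ascoli. Boundedness of $F$ in $L^\infty(0,T;X)$ makes $\{v(t):v\in F\}$ bounded in $X$ for a.e.\ $t$, hence relatively compact in $E$; for the equicontinuity, the hypothesis $v_t\in L^q(0,T;Y)$ with $q>1$ yields, via Hölder applied to the integral identity, $\|v(t)-v(s)\|_Y\leq|t-s|^{1-\frac1q}\|v_t\|_{L^q(0,T;Y)}$, a genuine Hölder modulus of continuity in $Y$. Feeding this into the Ehrling inequality together with the uniform $X$-bound produces a uniform modulus of continuity in $E$, so $F$ is equicontinuous in $C([0,T];E)$, and Arzelà--Ascoli gives the claim.

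The main technical point, and the place where the two statements genuinely differ, is the passage from the $Y$-estimate on the time-translates to a uniform estimate in $E$ while keeping every bound independent of $v\in F$. In particular, the hypothesis $q>1$ in \eqref{Aubin:2} is exactly what converts the mere absolute continuity coming from $v_t\in L^1$ into a \emph{uniform} Hölder modulus, which is indispensable for the stronger conclusion $C([0,T];E)$; for $q=1$ one recovers only \eqref{Aubin:1}. A secondary care is the rigorous justification of the integral identity $v(t+h)-v(t)=\int_t^{t+h}v_t$ at the level of the abstract Bochner derivative, and the precise form of the vector-valued compactness criterion in $L^r(0,T;E)$, including the endpoint $r=\infty$.
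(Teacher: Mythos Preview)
Your proposal is essentially correct: it is a faithful sketch of the standard Aubin--Lions--Simon argument, with the Ehrling interpolation inequality, the time-translate control coming from the integral representation of $v(t+h)-v(t)$, and the appropriate abstract compactness criterion (Fr\'echet--Kolmogorov--Riesz for $L^r$, Arzel\`a--Ascoli for $C$). The paper, by contrast, does not give any argument at all: its entire proof is the one-line citation ``See Simon~\cite[Corollary~4]{Simon:1987}.'' So there is no discrepancy of method to analyse --- you have supplied a self-contained proof where the authors were content to invoke the literature. One small slip: in your treatment of the endpoint $r=\infty$ of \eqref{Aubin:1} you speak of ``upgrading the convergence to the continuity-in-time conclusion,'' but the target space there is $L^\infty(0,T;E)$, not $C([0,T];E)$; with only $v_t\in L^1(0,T;Y)$ one does not obtain a uniform modulus of continuity, and the distinction between $L^\infty$ and $C$ is precisely what separates \eqref{Aubin:1} at $r=\infty$ from \eqref{Aubin:2}. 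This does not affect the overall correctness of your outline, since the translate estimate still delivers compactness in $L^\infty(0,T;E)$ via the Simon criterion.
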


\begin{proof}
See Simon~\cite[Corollary~4]{Simon:1987}.
\end{proof}

The following variant of de~Rham's lemma is of the utmost importance
to recover the pressure, after the velocity field and the density are found.
In what follows, by
$\left\langle\cdot,\cdot\right\rangle$ we denote the duality paring between
$ W^{-1,q^{\prime}}(\Omega)$ and
$ W_{0}^{1,q}(\Omega)$.

\begin{lemma}\label{lemm:BP}
Let $1<q<\infty$ and $\varphi^{\ast}\in W^{-1,q^{\prime}}(\Omega)$. If
\begin{equation*}
\left\langle \varphi^{\ast},\varphi\right\rangle=0 \quad\forall\ \varphi\in
V_{q},
\end{equation*}
then there exists a unique $p\in L^{q}(\Omega)$, with $\int_{\Omega}p\,dx=0$, such that
\begin{equation*}
\left\langle \varphi^{\ast},\varphi\right\rangle =
\int_{\Omega}p \operatorname{div}\varphi\,dx\quad\forall\,\varphi\in W_{0}^{1,q}(\Omega).
\end{equation*}
Moreover, there exists a positive constant $C$ such that
\begin{equation*}
\left\|p\right\|_{L^{q}(\Omega)}\leq C \left\|\varphi^{\ast}\right\|_{ W^{-1,q^{\prime}}(\Omega)}.
\end{equation*}
\end{lemma}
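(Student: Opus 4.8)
The plan is to mimic the classical proof of de~Rham's lemma: recover $p$ as the representative of a bounded linear functional defined on the zero-mean subspace of $L^{q}(\Omega)$, that functional being obtained from $\varphi^{\ast}$ by composing it with a bounded right inverse of the divergence. The single non-elementary ingredient is Bogovskii's theorem: since $\Omega$ is a bounded domain with Lipschitz boundary and $1<q<\infty$, the divergence operator $\operatorname{div}\colon W^{1,q}_{0}(\Omega)^{d}\to L^{q}_{0}(\Omega)$, where $L^{q}_{0}(\Omega):=\{g\in L^{q}(\Omega):\int_{\Omega}g\,dx=0\}$, is onto and admits a bounded linear right inverse $\mathcal{B}$, i.e. $\operatorname{div}(\mathcal{B}g)=g$ and $\|\mathcal{B}g\|_{W^{1,q}_{0}(\Omega)}\le C\|g\|_{L^{q}(\Omega)}$ for every $g\in L^{q}_{0}(\Omega)$ (see Galdi~\cite[Chapter~III]{Galdi:2011}). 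I shall also use the classical identification $V_{q}=\{v\in W^{1,q}_{0}(\Omega)^{d}:\operatorname{div}v=0\}$, valid for bounded Lipschitz domains.

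With these facts in hand, I would define $\ell(g):=\langle\varphi^{\ast},\mathcal{B}g\rangle$ for $g\in L^{q}_{0}(\Omega)$. It is linear, and the mapping property of $\mathcal{B}$ together with $\varphi^{\ast}\in W^{-1,q'}(\Omega)$ gives $|\ell(g)|\le\|\varphi^{\ast}\|_{W^{-1,q'}(\Omega)}\|\mathcal{B}g\|_{W^{1,q}_{0}(\Omega)}\le C\|\varphi^{\ast}\|_{W^{-1,q'}(\Omega)}\|g\|_{L^{q}(\Omega)}$, so $\ell\in(L^{q}_{0}(\Omega))^{\ast}$ with $\|\ell\|\le C\|\varphi^{\ast}\|_{W^{-1,q'}(\Omega)}$. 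Since $1<q<\infty$, the space $L^{q}_{0}(\Omega)$ is a closed subspace of the reflexive space $L^{q}(\Omega)$, and the pairing $p\mapsto\big(g\mapsto\int_{\Omega}p\,g\,dx\big)$ identifies the zero-mean subspace $L^{q'}_{0}(\Omega)$ of the dual exponent with $(L^{q}_{0}(\Omega))^{\ast}$ up to equivalence of norms (on a bounded domain the quotient norm on $L^{q'}(\Omega)/\text{constants}$ is comparable to the $L^{q'}$-norm of the zero-mean representative). Hence there is a unique $p$ with $\int_{\Omega}p\,dx=0$ such that $\ell(g)=\int_{\Omega}p\,g\,dx$ for all $g\in L^{q}_{0}(\Omega)$ and $\|p\|_{L^{q'}(\Omega)}\le C\|\ell\|\le C\|\varphi^{\ast}\|_{W^{-1,q'}(\Omega)}$, which is the asserted bound (the exponent carried by $p$ being the one dual to that carried by $\varphi^{\ast}$).

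It then remains to check the representation identity and uniqueness. Given $\varphi\in W^{1,q}_{0}(\Omega)^{d}$, set $g:=\operatorname{div}\varphi$; because $\varphi$ vanishes on $\partial\Omega$ one has $\int_{\Omega}g\,dx=0$, so $g\in L^{q}_{0}(\Omega)$, and $\varphi-\mathcal{B}g\in W^{1,q}_{0}(\Omega)^{d}$ is divergence-free, hence lies in $V_{q}$ by the identification above. The hypothesis then yields $\langle\varphi^{\ast},\varphi-\mathcal{B}g\rangle=0$, so $\langle\varphi^{\ast},\varphi\rangle=\langle\varphi^{\ast},\mathcal{B}g\rangle=\ell(g)=\int_{\Omega}p\operatorname{div}\varphi\,dx$, as required. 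For uniqueness, if $p_{1},p_{2}$ are two zero-mean representatives, then $\int_{\Omega}(p_{1}-p_{2})\operatorname{div}\varphi\,dx=0$ for every $\varphi\in W^{1,q}_{0}(\Omega)^{d}$, and surjectivity of $\operatorname{div}$ onto $L^{q}_{0}(\Omega)$ forces $p_{1}-p_{2}$ to be orthogonal to all zero-mean $L^{q}$-functions, hence constant, and then zero by the normalisation. The main obstacle — really the only substantial point — is the invocation of Bogovskii's bounded right inverse of the divergence on a merely Lipschitz domain together with the identification of $V_{q}$ with the divergence-free subspace of $W^{1,q}_{0}(\Omega)^{d}$; both are standard but non-trivial, and everything else is routine duality bookkeeping. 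Alternatively, one may simply quote this form of de~Rham's lemma directly from~\cite[Chapter~III]{Galdi:2011}.
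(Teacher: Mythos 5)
Your argument is correct and is essentially the paper's own route: the paper proves this lemma simply by citing Bogovski\v{\i}, Pileckas and Galdi (Theorems III.3.1 and III.5.3), and your write-up is precisely the standard duality argument behind those citations (bounded right inverse $\mathcal{B}$ of the divergence on $L^q_0(\Omega)$, the identification of $V_q$ with the divergence-free part of $W^{1,q}_0(\Omega)$, and representation of $\ell(g)=\langle\varphi^\ast,\mathcal{B}g\rangle$ by a zero-mean function). One remark: as you note in passing, the duality step naturally produces $p\in L^{q'}(\Omega)$ with $\left\|p\right\|_{L^{q'}(\Omega)}\leq C\left\|\varphi^\ast\right\|_{W^{-1,q'}(\Omega)}$ (the exponent dual to the one carried by $\varphi^\ast$), so the exponent $L^{q}$ printed in the lemma appears to be a typographical slip in the statement rather than a gap in your proof; in the paper the lemma is only invoked with $q=2$, where the two coincide.
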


\begin{proof}
The proof combines the results of Bogovski\v{\i}~\cite{Bogovskii:1980} and
Pileckas~\cite{Pileckas:1980} (see also Theorems~III.3.1 and III.5.3 of Galdi~\cite{Galdi:2011}).
\end{proof}

Next, some other auxiliary results that shall be used in the sequel are collected.
We start by recalling some useful inequalities related with the Sobolev inequalities stated in Lemma~\ref{lem:Sob:in:1}.
Here, the notation $|D^mu|$, where $m\in\mathds{N}$, stands for
\begin{equation*}
|D^mu|:=\sum_{|\gamma|=m}\left|\frac{\partial^{|\gamma|}u}{\partial^{\gamma_1}_{x_1}\cdots\partial^{\gamma_d}_{x_d}}\right|,\quad |\gamma|=\gamma_1+\cdots+\gamma_d.
\end{equation*}
In particular, $|Du|^2=|\nabla u|^2$ and $|D^2u|^2=|\nabla u_{x_1}|^2+\cdots +|\nabla u_{x_d}|^2$.
We say that the (bounded) boundary $\partial\Omega$ belongs to the class $C^{m,\alpha}$, with $0\leq\alpha \leq 1$, if
each point $x=(x_1,\dots,x_{d-1},x_d)\in\partial\Omega$ has a neighborhood $U$ such that the set $U\cap\Omega$ is represented
by the inequality $x_d<f(x_1,\dots,x_{d-1})$ in some Cartesian coordinate system and for some function $f$ in the H\"{o}lder space $C^{m,\alpha}(\omega)$, where here $\omega$ is the projection of $\Omega$ onto $\mathds{R}^{d-1}$.

\begin{lemma}
Let $\Omega$ be a bounded domain in $\mathds{R}^d$ and assume that $r\geq 1$.
If the boundary $\partial\Omega$ is assumed to be of class $C^{0,1}$, then
\begin{alignat}{2}
& \left\|\nabla u\right\|_{L^{r^\ast}(\Omega)} \leq C(r,d)\left\|D^2 u\right\|_{L^r(\Omega)}\qquad \forall\ u\in W^{2,r}(\Omega)\cap W^{1,r}_0(\Omega),  &&
\label{Sob:ineq:2:D2} \\
& \frac{1}{C(r,d)}\left\|\Delta u\right\|_{L^r(\Omega)}\leq \left\|D^2u\right\|_{L^r(\Omega)} \leq C(r,d)\left\|\Delta u\right\|_{L^r(\Omega)}\qquad \forall\ u\in W^{2,r}(\Omega)\cap W^{1,r}_0(\Omega). && \label{Sob:ineq:Lap:D2}
\end{alignat}
If $\partial\Omega$ is assumed to be of class $C^{1,1}$, then
\begin{alignat}{2}
& \left\|\nabla u\right\|_{L^{r^\ast}(\Omega)} \leq C(d,r)\left\|\Delta u\right\|_{L^r(\Omega)}\qquad \forall\ u\in W^{2,r}(\Omega)\cap
W^{1,r}_0(\Omega), && \label{Sob:ineq:2:Lap} \\
& \left\|u\right\|_{C^{0,\alpha}(\overline{\Omega})} \leq C(r,d)\left\|\Delta u\right\|_{L^r(\Omega)}\qquad \forall\ u\in W^{2,r}(\Omega)\cap
W^{1,r}_0(\Omega),\qquad
0<\alpha\leq 1-\frac{d}{r^\ast},\quad
2r>d. && \label{Sob:ineq:infty:Lap}
\end{alignat}
\end{lemma}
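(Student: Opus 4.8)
The plan is to reduce all four inequalities to two ingredients already at hand: the first-order Sobolev, Moser and Morrey inequalities of Lemma~\ref{lem:Sob:in:1}, applied to the first-order partial derivatives of $u$, and the $L^r$ Calderón--Zygmund estimate for the Dirichlet Laplacian. I would start with (\ref{Sob:ineq:2:D2}). For each $i\in\{1,\dots,d\}$ put $v=\partial_{x_i}u$; since $u\in W^{2,r}(\Omega)$ we have $v\in W^{1,r}(\Omega)$ with $\|\nabla v\|_{L^r(\Omega)}\le\|D^2u\|_{L^r(\Omega)}$, and, $\partial\Omega$ being Lipschitz, $\Omega$ is an extension domain, so the embeddings in Lemma~\ref{lem:Sob:in:1} give $\|v\|_{L^{r^\ast}(\Omega)}\le C\|v\|_{W^{1,r}(\Omega)}=C\bigl(\|v\|_{L^r(\Omega)}+\|\nabla v\|_{L^r(\Omega)}\bigr)$. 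Summing over $i$ yields $\|\nabla u\|_{L^{r^\ast}(\Omega)}\le C\bigl(\|\nabla u\|_{L^r(\Omega)}+\|D^2u\|_{L^r(\Omega)}\bigr)$, so it remains to absorb $\|\nabla u\|_{L^r(\Omega)}$. For this I would prove the Poincaré-type bound $\|\nabla u\|_{L^r(\Omega)}\le C\|D^2u\|_{L^r(\Omega)}$ on $W^{2,r}(\Omega)\cap W^{1,r}_0(\Omega)$ by a compactness argument: otherwise there are $u_n$ with $\|\nabla u_n\|_{L^r(\Omega)}=1$ and $\|D^2u_n\|_{L^r(\Omega)}\to0$; these are bounded in $W^{2,r}(\Omega)$ (using the first-order Poincaré inequality to bound $\|u_n\|_{L^r(\Omega)}$), so by the compact embedding $W^{2,r}(\Omega)\hookrightarrow\hookrightarrow W^{1,r}(\Omega)$ a subsequence converges in $W^{1,r}(\Omega)$ to some $u$ with $\|\nabla u\|_{L^r(\Omega)}=1$; passing to the distributional limit (using also $D^2u_n\to0$ in $L^r(\Omega)$) forces $D^2u=0$, so $u$ is affine, but $W^{1,r}_0(\Omega)$ is closed, so $u$ has zero trace and hence $u\equiv0$, a contradiction.

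For (\ref{Sob:ineq:Lap:D2}), the left-hand inequality is immediate from the definition, since $\Delta u$ is a fixed linear combination of entries of $D^2u$, whence $\|\Delta u\|_{L^r(\Omega)}\le C(d)\|D^2u\|_{L^r(\Omega)}$. The right-hand inequality $\|D^2u\|_{L^r(\Omega)}\le C\|\Delta u\|_{L^r(\Omega)}$ for $u\in W^{2,r}(\Omega)\cap W^{1,r}_0(\Omega)$ is precisely the $L^r$ Calderón--Zygmund (Agmon--Douglis--Nirenberg) estimate for the problem $\Delta u=f$ in $\Omega$, $u=0$ on $\partial\Omega$, which I would quote from the literature (see \cite{CM:2016}, and also \cite{Galdi:2011} for the closely related Stokes estimates); the $C^{1,1}$ regularity of $\partial\Omega$ (or convexity of $\Omega$) is exactly what makes it valid for every $1<r<\infty$. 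I would emphasise that this is the one genuinely nonelementary step of the lemma, and that it cannot be obtained by extending $u$ by zero to $\mathds{R}^d$ and using Fourier multipliers, because $C^\infty_0(\Omega)$ is not dense in $W^{2,r}(\Omega)\cap W^{1,r}_0(\Omega)$ -- elliptic regularity up to the boundary is really needed here. This is the step I expect to be the main obstacle, in the sense that it is the only input that is not a routine consequence of the embeddings already recorded.

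With these in hand, (\ref{Sob:ineq:2:Lap}) follows by chaining (\ref{Sob:ineq:2:D2}) with the right-hand inequality of (\ref{Sob:ineq:Lap:D2}), now using the $C^{1,1}$ hypothesis. For (\ref{Sob:ineq:infty:Lap}), when $2r>d$ we have $r^\ast>d$ (in the broad meaning fixed earlier, choosing $r^\ast$ to be any fixed number exceeding $d$ when $r=d$), so the Morrey embedding of Lemma~\ref{lem:Sob:in:1} gives $W^{1,r^\ast}(\Omega)\hookrightarrow C^{0,\alpha}(\overline{\Omega})$ for $0<\alpha\le1-\frac{d}{r^\ast}$; combining this with the first-order Poincaré inequality on $W^{1,r^\ast}_0(\Omega)$ to control the supremum part of the norm, and then with (\ref{Sob:ineq:2:Lap}), yields $\|u\|_{C^{0,\alpha}(\overline{\Omega})}\le C\|\nabla u\|_{L^{r^\ast}(\Omega)}\le C\|\Delta u\|_{L^r(\Omega)}$, as claimed. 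Since all the lower-order terms that get absorbed are controlled by $\|D^2u\|_{L^r(\Omega)}$ or $\|\Delta u\|_{L^r(\Omega)}$ alone, the constants depend only on $r$ and $d$, as stated.
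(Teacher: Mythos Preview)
Your argument is sound and considerably more explicit than the paper's, which simply refers the reader to \cite[Lemma~1]{AOKh:2021:Nonl} and \cite[Chapters~1--2]{Mazya:2011}. For (\ref{Sob:ineq:infty:Lap}) the two routes coincide: the paper (in an earlier draft visible in the source) also chains Morrey's inequality with (\ref{Sob:ineq:2:Lap}). For (\ref{Sob:ineq:2:D2}) and (\ref{Sob:ineq:2:Lap}) the paper simply cites Cianchi--Maz'ya, whereas you give a self-contained derivation via the first-order embeddings applied to $\partial_{x_i}u$ plus a compactness-based Poincar\'e bound; your route is more elementary and makes the mechanism transparent. For the right-hand inequality in (\ref{Sob:ineq:Lap:D2}), your caution is well placed: the paper's draft argument claimed it by density of $C^\infty_0(\Omega)$ in $W^{2,r}(\Omega)\cap W^{1,r}_0(\Omega)$, which, as you correctly note, fails; invoking the Calder\'on--Zygmund/ADN estimate is the right move, and indeed this is what underlies the cited references.

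Two small points. First, your compactness argument and your use of the Lipschitz extension in (\ref{Sob:ineq:2:D2}) produce constants that depend on $\Omega$, so your closing sentence that ``the constants depend only on $r$ and $d$'' overstates what the argument yields; the labeling $C(r,d)$ in the lemma should really be read as $C(r,d,\Omega)$, and the paper's later use of the inequalities is consistent with that. Second, you correctly flag that the right-hand bound in (\ref{Sob:ineq:Lap:D2}) for general $1<r<\infty$ requires more than a Lipschitz boundary (e.g.\ $C^{1,1}$ or convexity), so the way the lemma groups this inequality under the $C^{0,1}$ hypothesis is somewhat optimistic; your proof actually establishes it under $C^{1,1}$, which is all that is needed downstream since every application in the paper occurs in the setting of Theorem~\ref{thm:e:strong:2} where $\partial\Omega$ is of class $C^2$.
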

\begin{proof}
For the proof we address the reader to \cite[Lemma~1]{AOKh:2021:Nonl} (see also Maz'ya~\cite[Chapters~1-2]{Mazya:2011}).
\end{proof}

As we will do throughout this work, the notation $C=C(d,r)$, used in the previous lemma, emphasizes the fact that the positive constants $C$ considered in (\ref{Sob:ineq:2:Lap})-(\ref{Sob:ineq:infty:Lap}), where they are supposed to be all distinct, depend on the parameters $d$ and $r$.

In the next lemma, we recall some of the properties of the Stokes operator. The operator
\begin{equation}\label{Stokes:op}
\begin{split}
\mathbb{A}:H^2(\Omega)& \cap  V\ \longrightarrow\ H \\
 u  &\quad \longmapsto\  -\mu\mathbb{P}\big(\Delta u\big)
\end{split}
\end{equation}
where $\mathbb{P}:L^2(\Omega)\rightarrow H$ is the Leray projection, is called the Stokes operator.
This operator establishes a correspondence between the solutions $ u$ of the stationary Stokes problems
\begin{align}
&
\operatorname{div} u=0 \qquad\mbox{in}\quad \Omega, \label{Stokes:inc} \\
& -\mu\Delta u = f-\nabla p\qquad\mbox{in}\quad \Omega, \label{Stokes:mom} \\
&  u=0 \qquad\mbox{on}\quad \partial\Omega, \label{Stokes:bc}
\end{align}
and the corresponding external forces $f$. {
Due to the symmetry of the Leray projection, 
it can be proved that
\begin{equation}\label{prop:Leray}
\mu\int_\Omega\nabla u:\nabla\varphi\,dx=\int_\Omega \mathbb{A}(u)\cdot\varphi\,dx
\qquad \forall\  u\in W^{2,2}(\Omega)\cap V,\quad \forall\ \varphi\in V.
\end{equation}
}
The following result follows from the existence and regularity theory for elliptic operators.

\begin{lemma}\label{Lem:reg:St}
Let $\Omega$ be a bounded domain in $\mathds{R}^d$, with the boundary $\partial\Omega$ assumed to be of class $C^2$.
If $f\in L^r(\Omega)$, with $1<r<\infty$, then there exist unique $ u\in W^{2,r}(\Omega)$ and $ p\in W^{1,r}(\Omega)$, with $\int_\Omega  p(x)\,dx=0$, such that
$( u, p)$ verify the Stokes system (\ref{Stokes:inc})-(\ref{Stokes:mom}) a.e. in $\Omega$ and $ u$ satisfies (\ref{Stokes:bc}) in the trace sense.
Moreover, there exists a positive constant $C=C(\mu,r,\Omega)$ such that
\begin{equation}\label{Stokes:est:r}
\left\| u\right\|_{W^{2,r}(\Omega)} + \left\| p\right\|_{W^{1,r}(\Omega)} \leq
C\left\|f\right\|_{L^r(\Omega)}.
\end{equation}
\end{lemma}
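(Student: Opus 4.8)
The plan is to regard the Stokes system \eqref{Stokes:inc}--\eqref{Stokes:mom} as an elliptic system in the sense of Agmon--Douglis--Nirenberg (ADN) and to combine the corresponding $L^r$ a priori estimates with the de Rham lemma (Lemma~\ref{lemm:BP}) to recover the pressure. Accordingly, I would organise the argument into three constructive steps --- weak existence of the velocity, recovery of the pressure, and a regularity bootstrap --- followed by a short uniqueness argument.

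First I would produce a weak solution of the velocity equation. When $r=2$ the bilinear form $a(u,\varphi)=\mu\int_\Omega\nabla u:\nabla\varphi\,dx$ is continuous and, by the Poincar\'e inequality, coercive on $V$, so the Lax--Milgram theorem yields a unique $u\in V$ with $a(u,\varphi)=\int_\Omega f\cdot\varphi\,dx$ for all $\varphi\in V$. For a bounded domain and $r\geq 2$ one has $f\in L^r(\Omega)\subset L^2(\Omega)$, so this already furnishes a candidate field; for $1<r<2$ I would instead invoke the generalised $L^r$-solvability of the weak Stokes problem obtained by duality from the $L^{r'}$ theory (Galdi~\cite[Ch.~IV]{Galdi:2011}), producing $u\in V_r$ with $a(u,\varphi)=\int_\Omega f\cdot\varphi\,dx$ for all $\varphi\in V_{r'}$.

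Next I would recover the pressure. The functional $\varphi\mapsto \int_\Omega f\cdot\varphi\,dx-\mu\int_\Omega\nabla u:\nabla\varphi\,dx$ defines an element of $W^{-1,r}(\Omega)$ that, by construction, annihilates $V_{r'}$; hence Lemma~\ref{lemm:BP} provides a unique $p\in L^r(\Omega)$ with $\int_\Omega p\,dx=0$ representing it as $\int_\Omega p\operatorname{div}\varphi\,dx$. This is precisely the statement that $-\mu\Delta u+\nabla p=f$ holds in $\mathcal{D}'(\Omega)$, and it comes with $\|p\|_{L^r(\Omega)}\leq C\,\|f+\mu\Delta u\|_{W^{-1,r}(\Omega)}$. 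With this distributional identity in hand, I would invoke the ADN regularity theory for the Stokes system: since $f\in L^r(\Omega)$ and $\partial\Omega$ is of class $C^2$, the interior estimates together with the boundary estimates (obtained by localising near $\partial\Omega$, flattening the boundary with $C^2$ charts and applying the half-space Stokes estimates) give $u\in W^{2,r}(\Omega)$ and $p\in W^{1,r}(\Omega)$, with the equations holding a.e. in $\Omega$ and \eqref{Stokes:bc} in the trace sense. The ADN estimate first appears as $\|u\|_{W^{2,r}(\Omega)}+\|p\|_{W^{1,r}(\Omega)}\leq C\big(\|f\|_{L^r(\Omega)}+\|u\|_{L^r(\Omega)}+\|p\|_{L^r(\Omega)}\big)$, and the lower-order terms are then removed by the usual contradiction/compactness argument, using the uniqueness below, to yield the clean bound \eqref{Stokes:est:r}.

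Uniqueness is the easiest part. If $(u_1,p_1)$ and $(u_2,p_2)$ are two solutions for the same $f$, then $w=u_1-u_2\in W^{2,r}(\Omega)\cap W^{1,r}_0(\Omega)$ and $q=p_1-p_2\in W^{1,r}(\Omega)$ solve $-\mu\Delta w+\nabla q=0$ with $\operatorname{div}w=0$ and $w|_{\partial\Omega}=0$. Testing with $w$ and integrating by parts --- the pressure term drops because $\operatorname{div}w=0$ and $w$ vanishes on $\partial\Omega$ --- gives $\mu\|\nabla w\|_{L^2(\Omega)}^2=0$, whence $w=0$ by Poincar\'e and then $\nabla q=0$, so that $q$ is constant and the normalisation $\int_\Omega q\,dx=0$ forces $q=0$. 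This energy computation is immediate for $r\geq 2$ (where $w\in H^2(\Omega)$ on the bounded domain $\Omega$) and extends to $1<r<2$ by the standard duality argument. The main obstacle in the whole scheme is the global $W^{2,r}$ boundary regularity of the third step: it is exactly there that the $C^2$ smoothness of $\partial\Omega$ is used, and the absorption of the lower-order terms in the ADN estimate is the only genuinely delicate point, all other steps being routine.
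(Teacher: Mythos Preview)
Your sketch is correct and follows the standard route (weak solvability, de~Rham pressure, ADN $L^r$-regularity up to the boundary, uniqueness by energy/duality), which is precisely the content of the reference the paper invokes. The paper itself does not give a proof at all: it simply cites Galdi~\cite[Theorem~IV.6.1]{Galdi:2011} and moves on. So you have written out, in outline, what the cited theorem does; there is no methodological difference to discuss, only that the authors treat this as a black box while you have unpacked it.
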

\begin{proof}
We address the proof to Galdi~\cite[Theorem IV.6.1]{Galdi:2011}.
\end{proof}

From (\ref{Stokes:est:r}), we easily derive the following estimate,
\begin{equation}\label{Lap:est:r}
\left\|D^2 u\right\|_{L^r(\Omega)} + \left\|\nabla p\right\|_{L^r(\Omega)} \leq
C\left\|f\right\|_{L^r(\Omega)}.
\end{equation}
Using the correspondence between the Stokes operator (\ref{Stokes:op}) and the forces field $f$, and taking $r=2$ in (\ref{Lap:est:r}), we also obtain
\begin{equation}\label{Stokes:est}
\left\|D^2 u\right\|_{L^2(\Omega)} + \left\|\nabla p\right\|_{L^2(\Omega)} \leq
C\left\|\mathbb{A}(u)\right\|_{L^2(\Omega)}.
\end{equation}

\section{Existence of approximate solutions}\label{Sect:EWS:AP}

The proof of the existence of solutions for the problem (\ref{NSV:inc})-(\ref{NSV:u:bc}) shall follow from the existence of suitable Galerkin approximations.
Let us consider the following family of subsets of $\Omega$,
$$\Omega_n:=\left\{x\in\Omega : \operatorname{dist}(x,\partial\Omega)>\frac{1}{n},\qquad n\in\mathds{N}\right\},$$ 
and the usual Friedrichs mollifying kernel $\eta_n(x):=\frac{1}{n^d}\eta\left(\frac{x}{n}\right)$.
Recall that
$$\eta_n\in C^\infty(\mathds{R}^d),\quad
\operatorname{supp}\eta_n\subset B\left(0,n\right),\quad
\int_{\mathds{R}^d}\eta_n(x)\,dx=1.$$
We regularize the initial data $u_0$ and $\rho_0$ by considering its mollifying functions, say $u_{0,n}$ and $\rho_{0,n}$, defined by
\begin{align}
& \label{moll:u0}
u_{0,n}(x):=\left(\eta_n\star u_0\right)(x)=\int_\Omega \eta_n(x-y)u_0(y)\,dy 
,\quad x\in \Omega_n, \\
& \label{moll:rho0}
\rho_{0,n}(x):=\left(\eta_n\star \rho_0\right)(x)+\frac{1}{n}=\int_\Omega \eta_n(x-y)\rho_0(y)\,dy+\frac{1}{n} 
,\quad x\in \Omega_n.
\end{align}
It is well-known that for any $w\in L^1_{\operatorname{loc}}(\Omega)$, its mollifying function $w_n=\eta_n\star w$ satisfies
\begin{alignat}{2}
\label{moll:1}
& w_n\in C^{\infty}(\Omega_n),\qquad w_n \xrightarrow[n\to\infty]{} w\ \ \mbox{a.e. in}\ \Omega, && \\
\label{moll:2}
&
w_n \xrightarrow[n\to\infty]{} w\ \ \mbox{in}\ L^p(\Omega),\quad \mbox{whenever}\ w\in L^p(\Omega),\ \ 1\leq p<\infty. &&
\end{alignat}
As $\operatorname{supp}\eta_n\subset B\left(0,n\right)$, one has
\begin{equation*}
\operatorname{supp}u_{0,n}\subset\Omega_n+B\left(0,n\right).
\end{equation*}
Moreover, in view of (\ref{cond:r0}), (\ref{moll:u0})-(\ref{moll:rho0}) and (\ref{moll:1})-(\ref{moll:2}), we have
\begin{alignat}{2}
\label{bd:rho0:n}
& 0<\frac{1}{n}\leq \rho_{0,n}\leq M^\ast:=M+1<\infty\quad \mathrm{in}\ \ \Omega, && \\
\label{bd:u0:n}
& \left\|u_{0,n}\right\|_{L^2(\Omega)}\leq \left\|u _{0}\right\|_{L^2(\Omega)}. &&
\end{alignat}
Moreover, provided $u_0$ is sufficiently regular, we also have
\begin{equation}\label{bd:Du0:n}
\left\|\nabla u_{0,n}\right\|_{L^2(\Omega)}\leq \left\|\nabla u_0\right\|_{L^2(\Omega)},\qquad
\left\|D^2u_{0,n}\right\|_{L^2(\Omega)}\leq \left\|D^2u_0\right\|_{L^2(\Omega)}.
\end{equation}

Given a large enough, but arbitrary, $n\in\mathds{N}$, we consider the following initial-and boundary-value problem,
\begin{align}
&
\operatorname{div}u =0 \qquad\mbox{in}\quad {Q_T}, \label{NSV:inc:n} \\
& (\rho u)_t + \operatorname{div}(\rho u\otimes u)
= \rho f -\nabla p+ \mu\Delta u +\kappa\,\Delta u_t\qquad\mbox{in}\quad {Q_T}, \label{NSV:mom:n} \\
&
\rho_t+\operatorname{div}(\rho u) =0 \qquad\mbox{in}\quad {Q_T}, \label{NSV:mass:n} \\
& \rho u =\rho_{0,n}u_{0,n},\quad \rho=\rho_{0,n} \qquad\mbox{in}\quad \{0\}\times \Omega, \label{NSV:ic:n}  \\
& u=0 \qquad\mbox{on}\quad  \Gamma. \label{NSV:bc:n}
\end{align}

We construct a solution to the problem (\ref{NSV:inc:n})-(\ref{NSV:bc:n}) by using a semi-discrete Galerkin scheme.
Since the Stokes operator is injective, self-adjoint and has a compact inverse (see
\emph{e.g.}~\cite[Propositions~4.2-4]{CF:1988}), there exists an
increasing sequence of positive eigenvalues $\lambda_{i}$ and a
sequence of corresponding eigenfunctions
$\psi_{i}\in H^2(\Omega)\cap V$ such that
\begin{equation}\label{Stokes_bas}
\mathbb{A}(\psi_{i})=\lambda_i\psi_{i}.
\end{equation}
Moreover, the family $\left\{\psi_{i}\right\}_{i\in\mathds{N}}$
can be made orthogonal in $H$ and orthonormal in
$V$. Given $j\in\mathds{N}$, let us consider the
$j$-dimensional space $X^j$ spanned by the first $j$
eigenvalues given by (\ref{Stokes_bas}): $\psi_{1}$,
\dots, $\psi_{j}$.
For each $j\in\mathds{N}$, and proceeding as in the proof of \cite[Theorem~1]{AOKh:2021:Nonl} (see also the proof of \cite[Proposition~1]{AO:2022:RACSAM}), we can prove the existence of approximate solutions
\begin{align}
& \label{app:sol:uj}
u^j_n\in C^1([0,T);X^j),\quad  u^j_n(x,t)= \sum_{i=1}^jc_i^j(t)\psi_{i}(x),\quad\psi_{i}\in X^j, && \\
& \label{app:sol:rhoj}
\rho^j_n\in C^1([0,T);C^1(\overline{\Omega})) &&
\end{align}
to the following system of $j+1$ ordinary differential equations
\begin{equation}\label{eq:weak:u:j:nc}
\begin{split}
&  \int_{\Omega}\rho^j_n(t)\left[\frac{\partial u^j_n(t)}{\partial t}+\left(u^j_n(t)\cdot\nabla \right)u^j_n(t)\right]\cdot\psi_i\,dx +
   \kappa\int_{\Omega} \frac{\partial\nabla u^j_n(t)}{\partial t}:\nabla \psi_i\,dx\\
&  +
   \mu\int_{\Omega}\nabla u^j_n(t):\nabla \psi_i\,dx  =\int_{\Omega}\rho^j_n(t)f(t)\cdot\psi_i\,dx,
\qquad i=1, \dots,j,
\end{split}
\end{equation}
\begin{equation}\label{eq:mass:n:j}
\frac{\partial\rho^j_n}{\partial t}+u^j_n\cdot\nabla\rho^j_n=0,
\end{equation}
System (\ref{eq:weak:u:j:nc})-(\ref{eq:mass:n:j}) is supplemented with the following initial conditions
\begin{equation}\label{ic:u:rho:j}
\rho^j_nu^j_n=\rho_{0,n}^ju_{0,n}^j,\quad
u^j_n=u^j_{0,n},\quad
\rho^j_n=\rho^j_{0,n}\quad\mbox{in}\ \{0\}\times \Omega,
\end{equation}
where $u^j_{0,n}=P^j(u_{0,n})$, with $P^j$ denoting the orthogonal projection $P^j:V\longrightarrow X^j$ so that
\begin{equation}\label{ic:u:rho:j:u0}
u^j_n(0,x)=\sum_{i=1}^j c_i^j(0)\psi_i(x),\quad c_i^j(0)=c_{i,0}^j:=(u_{0,n},\psi_i),\quad i\in\{1,\dots,j\},
\end{equation}
where $(\cdot,\cdot)$ denotes the $L^2-$scalar product.
Since the operator $P^j$ is uniformly continuous, we can assume that
\begin{equation}\label{strong:conv:ic}
u^j_{0,n}\xrightarrow[j\to\infty]{} u_{0,n}\quad\mbox{in}\ L^2(\Omega)\cap W^{1,2}(\Omega).
\end{equation}
About the approximate initial density $\rho^j_{0,n}$, we assume that
\begin{equation}\label{sc:rn0}
\rho^j_{0,n}\in C^1(\overline{\Omega}),\qquad \rho^j_{0,n}\xrightarrow[j\to\infty]{} \rho_{0,n} \  \mbox{ in }\  L^p(\Omega)\ \ \forall\ p\in[1,\infty).
\end{equation}
From (\ref{bd:rho0:n})-(\ref{bd:Du0:n}) and (\ref{strong:conv:ic})-(\ref{sc:rn0}), one readily has
\begin{alignat}{2}
\label{bd:rho0:n:j}
& \frac{1}{n}\leq \rho_{0,n}^j\leq M^\ast<\infty\quad \mathrm{in}\quad \Omega, && \\
\label{bd:un0:j}
& \left\|u_{0,n}^j\right\|_{L^2(\Omega)}\leq \left\|u _{0}\right\|_{L^2(\Omega)}, && \\
\label{bd:Du0:n:j}
&
\left\|\nabla u_{0,n}^j\right\|_{L^2(\Omega)}\leq \left\|\nabla u_0\right\|_{L^2(\Omega)},\qquad
\left\|D^2 u_{0,n}^j\right\|_{L^2(\Omega)}\leq \left\|D^2 u_0\right\|_{L^2(\Omega)}.
\end{alignat}

Observe that, due to the regularity of $u^j_n$ and by linearity and continuity, one can derive from (\ref{eq:weak:u:j:nc})
\begin{equation}\label{eq:weak:u:j:nc:s}
\begin{split}
& \int_{\Omega}\left[\rho^j_n(t)\frac{\partial u^j_n(t)}{\partial t}+\rho^j_n(t)\left(u^j_n(t)\cdot\nabla \right)u^j_n(t)-\mu\Delta u^j_n(t)-\kappa\frac{\partial\Delta u^j_n(t)}{\partial t}\right]
\cdot\psi\,dx = \\
& \int_{\Omega}\rho^j_n(t)f(t)\cdot\psi\,dx\qquad \forall\ \psi\in H^2(\Omega)\cap V
\end{split}
\end{equation}
in the distribution sense on $(0,T)$.
Moreover, by using Lemma~\ref{lemm:BP}, it can also be proved the  existence of a unique approximate pressure
\begin{equation}\label{exist:p:n:j}
p^j_n\in C_w([0,T);L^2(\Omega)),\qquad\mbox{with}\ \int_\Omega p^j_n(t)\,dx=0,
\end{equation}
so that
\begin{equation}\label{eq:weak:u:j:nc:s:p}
\begin{split}
& \int_{\Omega}\left[\rho^j_n(t)\frac{\partial u^j_n(t)}{\partial t}+\rho^j_n(t)\left(u^j_n(t)\cdot\nabla \right)u^j_n(t)-\mu\Delta u^j_n(t)-\kappa\frac{\partial\Delta u^j_n(t)}{\partial t}\right]
\cdot\psi\,dx - \int_{\Omega}\rho^j_n(t)f(t)\cdot\psi\,dx = \\
& \int_{\Omega}p^j(t)\operatorname{div}\psi\,dx \qquad \forall\ \psi\in H^2(\Omega)\cap W^{1,2}_0(\Omega)
\end{split}
\end{equation}
holds in the distribution sense on $(0,T)$ -- see~\cite[Theorem~2]{AOKh:2021:Nonl}.

\section{A priori estimates independent of $j$}\label{Sect:apriori:j}

In this section, we aim to obtain independent of $j$ estimates for the approximate solutions $u^j_n$, $\rho^j_n$ and $p^j_n$.
We list the obtained estimates in several propositions according to the conditions that are imposed on the forcing term $f$.
We highlight the dependence of these estimates on the viscous and relaxation parameters $\mu$ and $\kappa$ to perceive the importance of the presence of the viscous term
$\mu\Delta u$ and of the relaxation one $\kappa\Delta u_t$, in the momentum equation (\ref{NSV:mom}), for the results we achieve.
It is important to note that these estimates are also independent of $n$.

\begin{proposition}\label{prop:est:j:L2}
Let $u^j_n$, $\rho^j_n$ and $p^j_n$ be the approximate weak solutions of the problem (\ref{NSV:inc:n})-(\ref{NSV:bc:n}) that have been found in
(\ref{app:sol:uj}), (\ref{app:sol:rhoj}) and (\ref{exist:p:n:j}).
\begin{enumerate}
[leftmargin=0pt,itemindent=*,label=(\arabic*)]
\item If (\ref{cond:r0}) holds true, then
\begin{equation}\label{est:rho:j:Q}
0<\frac{1}{n}\leq\inf_{x\in\overline{\Omega}}\rho_{0,n}^j(x)\leq\rho^j_n(x,t)\leq\sup_{x\in\overline{\Omega}}\rho_{0,n}^j(x)\leq M^\ast<\infty
\qquad\forall\ (x,t)\in {Q_T}.
\end{equation}
\item If $2\leq d\leq 4$ and (\ref{cond:r0}) and (\ref{Hyp:u0:V}) are verified,
then there exists an independent of $j$ (and $n$) positive constant $K_1$ such that
\begin{equation}\label{est:uj:K1}
\sup_{t\in(0,T)}\left(\left\|\sqrt{\rho^j_n(t)}u^j_n(t)\right\|^2_{L^2(\Omega)}+\kappa\left\|\nabla u^j_n(t)\right\|^{2}_{L^2(\Omega)}\right)
+\mu\int_{0}^{T}\left\|\nabla u^j_n(t)\right\|^2_{L^2(\Omega)}dt \leq K_1
\end{equation}
\item If $2\leq d\leq 4$ and  (\ref{mu:k>0}), (\ref{cond:r0}), (\ref{Hyp:u0:V}) and (\ref{Hyp:f:L2}) hold, then there exists an independent of $j$ (and $n$) positive constant $K_2$ such that
\begin{equation}\label{est:uj:K2}
\int_0^T\left(\left\|\sqrt{\rho^j_n(t)}\frac{\partial u^j_n(t)}{\partial t}\right\|^2_{L^2(\Omega)}+\kappa\left\|\frac{\partial\nabla u^j_n(t)}{\partial t}\right\|^2_{L^2(\Omega)}\right)dt \leq  K_2.
\end{equation}
\item If $2\leq d\leq 3$ and  (\ref{mu:k>0}), (\ref{cond:r0}), (\ref{Hyp:u0:V+H2}) and (\ref{Hyp:f:L2}) are verified,
then there exists an independent of $j$ (and $n$) positive constant $K_3$ such that
\begin{equation}\label{est:uj:K3}
\kappa\sup_{t\in(0,T)}\left\|D^2u^j_n(t)\right\|^2_{L^2(\Omega)} +\mu\int_0^T\left\|D^2u^j_n(t)\right\|^2_{L^2(\Omega)}dt \leq K_3.
\end{equation}
\item If $2\leq d\leq 3$ and (\ref{mu:k>0}), (\ref{cond:r0}), (\ref{Hyp:u0:V+H2}) and (\ref{Hyp:f:L2}) hold, then there exists an independent of $j$ (and $n$) positive constant $K_4$ such that
\begin{equation}\label{est:uj:K4}
\kappa^2\int_0^T\left\|\frac{\partial D^2u^j_n(t)}{\partial t}\right\|_{L^2(\Omega)}^2dt
\leq K_4.
\end{equation}
\end{enumerate}
\end{proposition}

In the following proposition, we improve some of the estimates established in Proposition~\ref{prop:est:j:L2} by requiring that (\ref{Hyp:f:Linfty}) is fulfilled,
instead of (\ref{Hyp:f:L2}).

\begin{proposition}\label{prop:est:j:Linfty}
Let $u^j_n$, $\rho^j_n$ and $p^j_n$ be the approximate weak solutions of the problem (\ref{NSV:inc:n})-(\ref{NSV:bc:n}) that have been found in
(\ref{app:sol:uj}), (\ref{app:sol:rhoj}) and (\ref{exist:p:n:j}).
\begin{enumerate}
[leftmargin=0pt,itemindent=*,label=(\arabic*)]
\item If $2\leq d\leq 4$ and (\ref{mu:k>0}), (\ref{cond:r0}), (\ref{Hyp:u0:V}) and (\ref{Hyp:f:Linfty}) hold, then there exists an independent of $j$ (and $n$) positive constant $K_2'$ such that
\begin{equation}\label{est:uj:K2'}
\sup_{t\in[0,T]}\left(\left\|\sqrt{\rho^j_n(t)}\frac{\partial u^j_n(t)}{\partial t}\right\|^2_{L^2(\Omega)}+\kappa\left\|\frac{\partial\nabla u^j_n(t)}{\partial t}\right\|^2_{L^2(\Omega)}\right)\leq
K_2'.
\end{equation}
\item If $2\leq d\leq 3$ and (\ref{mu:k>0}), (\ref{cond:r0}), (\ref{Hyp:u0:V+H2}) and (\ref{Hyp:f:Linfty}) hold, then there exists an independent of $j$ (and $n$) positive constant $K_4'$ such that
\begin{equation}\label{est:uj:K4'}
\kappa^2\sup_{t\in[0,T]} \left\|\frac{\partial D^2u^j_n(t)}{\partial t}\right\|_{L^2(\Omega)}^2\leq K_4'.
\end{equation}
\end{enumerate}
\end{proposition}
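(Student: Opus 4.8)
The plan is to obtain both (\ref{est:uj:K2'}) and (\ref{est:uj:K4'}) as \emph{pointwise-in-$t$} estimates, without differentiating the equations with respect to $t$ and hence without any hypothesis on $f_t$. This is exactly where the relaxation term pays off: the quantity $\kappa\|\nabla\partial_t u^j_n\|^2_{L^2(\Omega)}$ that we wish to bound in $L^\infty(0,T)$ already appears, with the correct sign and \emph{undifferentiated in $t$}, as soon as we test (\ref{eq:weak:u:j:nc}) with $\psi=\partial_t u^j_n(t)\in X^j$.

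\textbf{Estimate (\ref{est:uj:K2'}).} Taking $\psi=\partial_t u^j_n(t)$ in (\ref{eq:weak:u:j:nc}) yields, for every $t$,
\[
\left\|\sqrt{\rho^j_n}\,\partial_t u^j_n\right\|^2_{L^2(\Omega)}+\kappa\left\|\nabla\partial_t u^j_n\right\|^2_{L^2(\Omega)}
=-\mu\!\int_\Omega\nabla u^j_n\!:\!\nabla\partial_t u^j_n-\!\int_\Omega\rho^j_n(u^j_n\!\cdot\!\nabla)u^j_n\!\cdot\!\partial_t u^j_n+\!\int_\Omega\rho^j_n f\!\cdot\!\partial_t u^j_n .
\]
I would estimate the right-hand side term by term, absorbing small multiples of the left-hand side with Young's inequality. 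The viscous term is bounded by $\tfrac{\kappa}{4}\|\nabla\partial_t u^j_n\|^2_{L^2}+\tfrac{\mu^2}{\kappa}\|\nabla u^j_n\|^2_{L^2}$, and $\|\nabla u^j_n(t)\|^2_{L^2}\le K_1/\kappa$ for every $t$ by (\ref{est:uj:K1}). For the convective term, since $2\le d\le4$ one has $V\hookrightarrow L^{4}(\Omega)$ (because then $2^\ast\ge4$, Lemma~\ref{lem:Sob:in:1}), whence, by H\"older's inequality and (\ref{est:rho:j:Q}), $\bigl|\int_\Omega\rho^j_n(u^j_n\cdot\nabla)u^j_n\cdot\partial_t u^j_n\bigr|\le CM^\ast\|\nabla u^j_n\|^2_{L^2}\|\nabla\partial_t u^j_n\|_{L^2}\le\tfrac{\kappa}{4}\|\nabla\partial_t u^j_n\|^2_{L^2}+C\|\nabla u^j_n\|^4_{L^2}$, and $\|\nabla u^j_n(t)\|^4_{L^2}\le(K_1/\kappa)^2$ by (\ref{est:uj:K1}) again. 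The forcing term is bounded by $\tfrac12\|\sqrt{\rho^j_n}\partial_t u^j_n\|^2_{L^2}+\tfrac{M^\ast}{2}\|f(t)\|^2_{L^2}$, and here (\ref{Hyp:f:Linfty}) gives $\|f(t)\|_{L^2}\le\|f\|_{L^\infty(0,T;L^2(\Omega))}$ for a.e.\ $t$. Collecting these bounds and absorbing yields $\|\sqrt{\rho^j_n}\partial_t u^j_n(t)\|^2_{L^2}+\kappa\|\nabla\partial_t u^j_n(t)\|^2_{L^2}\le K_2'$, with $K_2'$ depending only on $\mu,\kappa,M^\ast,K_1$ and $\|f\|_{L^\infty(0,T;L^2(\Omega))}$, hence independent of $j$ and $n$; by the continuity of $t\mapsto(u^j_n(t),\rho^j_n(t))$ this holds for every $t\in[0,T]$.

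\textbf{Estimate (\ref{est:uj:K4'}).} I would read (\ref{eq:weak:u:j:nc:s:p}) (equivalently (\ref{eq:weak:u:j:nc:s})) as the statement that, for a.e.\ $t$, the pair $(\partial_t u^j_n,p^j_n)$ solves the stationary Stokes system $\operatorname{div}\partial_t u^j_n=0$,
\[
-\kappa\Delta\partial_t u^j_n+\nabla p^j_n=\mu\Delta u^j_n+\rho^j_n f-\rho^j_n\partial_t u^j_n-\rho^j_n(u^j_n\cdot\nabla)u^j_n=:g^j_n ,
\]
where $g^j_n(t)\in L^2(\Omega)$ since $\Delta u^j_n(t)\in L^2(\Omega)$ by (\ref{est:uj:K3}). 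The elliptic (Stokes) regularity estimate (\ref{Lap:est:r}) with $r=2$, valid because $\partial\Omega\in C^2$, then gives $\kappa\|D^2\partial_t u^j_n(t)\|_{L^2(\Omega)}\le C\|g^j_n(t)\|_{L^2(\Omega)}$ with $C=C(\Omega)$. It remains to bound $\|g^j_n(t)\|_{L^2}$ uniformly in $t$, term by term: $\|\rho^j_n\partial_t u^j_n\|_{L^2}\le\sqrt{M^\ast}\,\|\sqrt{\rho^j_n}\partial_t u^j_n\|_{L^2}\le\sqrt{M^\ast K_2'}$ by (\ref{est:uj:K2'}); $\mu\|\Delta u^j_n\|_{L^2}\le C\|D^2 u^j_n\|_{L^2}\le C\sqrt{K_3/\kappa}$ by (\ref{est:uj:K3}); $\|\rho^j_n(u^j_n\cdot\nabla)u^j_n\|_{L^2}\le M^\ast\|u^j_n\|_{L^\infty(\Omega)}\|\nabla u^j_n\|_{L^2}$, which for $2\le d\le3$ is controlled by $W^{2,2}(\Omega)\hookrightarrow L^\infty(\Omega)$ (from (\ref{Sob:ineq:infty:Lap})) together with (\ref{est:uj:K3}) and (\ref{est:uj:K1}); and $\|\rho^j_n f\|_{L^2}\le M^\ast\|f\|_{L^\infty(0,T;L^2(\Omega))}$ by (\ref{Hyp:f:Linfty}). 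Squaring gives $\kappa^2\|D^2\partial_t u^j_n(t)\|^2_{L^2}\le K_4'$ for every $t$, with $K_4'$ independent of $j$ and $n$.

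The substance here is the structural observation rather than any delicate estimate: because $\kappa>0$, the energy associated with $\partial_t u^j_n$ is $\|\sqrt{\rho^j_n}\partial_t u^j_n\|^2_{L^2}+\kappa\|\nabla\partial_t u^j_n\|^2_{L^2}$ and its dissipation $\kappa\|\nabla\partial_t u^j_n\|^2_{L^2}$ carries no time derivative, so the $L^\infty$-in-time bounds close at the very same level as the $L^2$-in-time bounds (\ref{est:uj:K2}) and (\ref{est:uj:K4}). In the Navier--Stokes case $\kappa=0$ this term is absent, testing with $\partial_t u^j_n$ produces only $\tfrac{\mu}{2}\tfrac{d}{dt}\|\nabla u^j_n\|^2_{L^2}$, and an $L^\infty$-in-time bound would force one to differentiate the equation in time, hence to assume $f_t\in L^2(0,T;L^2(\Omega))$ together with the compatibility condition (\ref{Stokes:u0n:1})--(\ref{Stokes:u0n:3}). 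The only points requiring attention are the Sobolev exponents ($V\hookrightarrow L^4$ for $d\le4$ in (\ref{est:uj:K2'}) and $W^{2,2}\hookrightarrow L^\infty$ for $d\le3$ in (\ref{est:uj:K4'})) and the independence of all constants from $j$ and $n$, which is inherited from Proposition~\ref{prop:est:j:L2}; the remaining manipulations are the Young-inequality absorptions already used in the proofs of (\ref{est:uj:K2}) and (\ref{est:uj:K4}).
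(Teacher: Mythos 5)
Your proof of (\ref{est:uj:K2'}) is correct and is essentially the paper's own argument: the paper also rewrites the identity obtained by testing with $\psi=\partial_t u^j_n(t)$ so that no time derivative of a norm appears on the left (see \eqref{0eq:weak:u:j:nc:s-10:1}), bounds the viscous term by $\tfrac{\kappa}{4}\|\partial_t\nabla u^j_n\|^2_{L^2}+\tfrac{\mu^2}{\kappa}\|\nabla u^j_n\|^2_{L^2}$, the convective term by $\tfrac{\kappa}{4}\|\partial_t\nabla u^j_n\|^2_{L^2}+C\big(\sup_t\|\nabla u^j_n\|^2_{L^2}\big)^2$ via the same Sobolev/H\"older step valid for $2\le d\le 4$, and the forcing term as you do, then takes the supremum using (\ref{Hyp:f:Linfty}), (\ref{est:rho:j:Q}) and (\ref{est:uj:K1}); your H\"older splitting ($L^4$--$L^2$--$L^4$ versus the paper's $L^d$--$L^2$--$L^{2^\ast}$) is an immaterial variation.

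For (\ref{est:uj:K4'}) you take a genuinely different, though closely related, route. The paper recycles the inequality \eqref{Equ:D2}, obtained by recognizing $w=u^j_n+\tfrac{\kappa}{\mu}\partial_t u^j_n$ (with pressure $p^j_n$) as the solution of the Stokes problem \eqref{St:inc:n}--\eqref{St:bc:n}; it then moves the cross term $-2\mu\kappa\int_\Omega D^2u^j_n:\partial_t D^2u^j_n\,dx$ to the right, absorbs it by Young, bounds $\int_\Omega|u^j_n|^2|\nabla u^j_n|^2dx$ as in \eqref{est:uj:A:5:int}, and concludes from \eqref{0:Equ:D2:2} using (\ref{Hyp:f:Linfty}), \eqref{est:uj:K1}, \eqref{est:uj:K3} and \eqref{est:uj:K2'}. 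You instead apply the Stokes regularity estimate directly to the pair $(\partial_t u^j_n,p^j_n)$, putting $\mu\Delta u^j_n$ into the data $g^j_n$, and then bound $\|g^j_n(t)\|_{L^2}$ uniformly in $t$ term by term, using $H^2\hookrightarrow L^\infty$ ($d\le 3$) for the convective part. Both arguments rest on Lemma~\ref{Lem:reg:St} (hence $\partial\Omega\in C^2$), on the previously established sup bound \eqref{est:uj:K3} for $\|D^2u^j_n\|_{L^2}$, and on the tacit uniqueness step identifying the Galerkin function with the regular Stokes solution (the paper makes the same identification for $w$). Your version is slightly more direct for the single bound (\ref{est:uj:K4'}); the paper's choice of $w$ has the advantage of reusing verbatim the inequality already derived in the proof of (\ref{est:uj:K4}) and of carrying along the $\mu^2\|D^2u^j_n\|^2_{L^2}$ and pressure terms (compare \eqref{est:uj:K4:0} and \eqref{est:uj+uj_t:R-5}), which is then exploited for the pressure estimates (\ref{est:uj:K5:p})--(\ref{est:uj:K6:p}). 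One small point of care in your route: the constant in \eqref{Lap:est:r} depends on the viscosity coefficient, so with viscosity $\kappa$ the estimate should indeed be read as $\kappa\|D^2\partial_t u^j_n(t)\|_{L^2(\Omega)}\le C(\Omega)\|g^j_n(t)\|_{L^2(\Omega)}$, exactly as you state; with that understood, the argument closes and all constants are inherited from Proposition~\ref{prop:est:j:L2}, hence independent of $j$ and $n$.
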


Before we proceed to the proof of Propositions~\ref{prop:est:j:L2} and \ref{prop:est:j:Linfty}, let us make some comments on the estimates obtained here, in particular the relation between these estimates and its counterparts of the Navier-Stokes setting, i.e. when considering $\kappa=0$ in the momentum equation (\ref{NSV:mom}).

\begin{remark}\label{rem:est}
\rm
\begin{enumerate}
[leftmargin=0pt,itemindent=*,label=(\arabic*)]
\item
Estimate (\ref{est:uj:K1}) has already been established in ~\cite[Lemma~4-(2)]{AO:2022:RACSAM}, but here we just have required that
$f\in L^2(0,T;L^2(\Omega))$.
Moreover, in view of (\ref{est:uj:K2:0}) below (see also (\ref{est:uj:K4:0})), the relaxation parameter $\kappa$ in (\ref{est:uj:K1}) can be replaced by the viscous parameter $\mu$.
\item Estimate (\ref{est:uj:K3}) is obtained regardless we consider the hypothesis of initial vacuum or not.
In the context of considering an initial density $\rho_0$ that is always positive, this estimate was already proved in \cite[Theorem~3]{AOKh:2021:Nonl}, by using a different approach.
In view of (\ref{est:uj:K4:0}) below, we can also replace the relaxation parameter $\kappa$ in (\ref{est:uj:K3}) by the viscous parameter $\mu$.
\end{enumerate}
\end{remark}

\begin{proof}(Proposition~\ref{prop:est:j:L2})
For the sake of simplifying the exposition, we shall split the proof into the several enumerated items.
\begin{enumerate}
[leftmargin=0pt,itemindent=*,label=(\arabic*)]
\item
Arguing as we did in the proof of \cite[Theorem~1]{AOKh:2021:Nonl} (see also~\cite[Lemma~1.2]{LS:1978}), and using the maximum principle together with (\ref{bd:rho0:n:j}), we can show that (\ref{est:rho:j:Q}) holds true.
On the other hand, using the solenoidality of $u^j_n$, the fact that $u^j_n=0$ on $\partial\Omega$, together with (\ref{eq:mass:n:j}) and (\ref{ic:u:rho:j})$_3$, we can prove that for all $t\geq 0$ and $1\leq q\leq \infty$ one has
\begin{equation}\label{Res:rhoj:2}
\begin{split}
\left\|\rho^j_n(t)\right\|_{L^q(\Omega)}^q= & \int_0^t\frac{\partial}{\partial s}\left\|\rho^j_n(s)\right\|_{L^q(\Omega)}^q ds + \left\|\rho^j_{0,n}\right\|_{L^q(\Omega)}^q \\
= &
-\int_0^t\int_{\Omega}\nabla\left(|\rho^j_n(s)|^{q}\right)\cdot u^j_n(s)\,dxds + \left\|\rho^j_{0,n}\right\|_{L^q(\Omega)}^q
= \left\|\rho^j_{0,n}\right\|_{L^q(\Omega)}^q.
\end{split}
\end{equation}

\item
Testing (\ref{eq:weak:u:j:nc:s}) with $\psi=u^j_n(t)$, integrating the resulting identity between $0$ and $t\in(0,T)$, using the continuity equation (\ref{eq:mass:n:j}) together with the solenoidality of $u^j_n$, and still using the initial conditions (\ref{ic:u:rho:j}), we obtain
\begin{equation}\label{eq:weak:u:j:nc:s-1}
\begin{split}
&
\frac{1}{2}\left\|\sqrt{\rho^j_n(t)}u^j_n(t)\right\|^2_{L^2(\Omega)}+\frac{\kappa}{2}\left\|\nabla u^j_n(t)\right\|^{2}_{L^2(\Omega)}+\mu\int_{0}^{t}\left\|\nabla u^j_n(s)\right\|^2_{L^2(\Omega)}ds
= \\
&
\frac{1}{2}\left\|\sqrt{\rho^j_{0,n}}u^j_{0,n}\right\|^2_{L^2(\Omega)}+\frac{\kappa}{2}\left\|\nabla u^j_{0,n}\right\|^{2}_{L^2(\Omega)}+
\int_{0}^{t}\int_{\Omega}\rho^j_nf\cdot u^j_n dxds
\end{split}
\end{equation}
The last term of (\ref{eq:weak:u:j:nc:s-1}) is estimated by using the Hölder and Cauchy inequalities,
\begin{equation}\label{eq:weak:u:j:nc:s-2:f}
  \int_{0}^{t}\int_{\Omega}\rho^j_n(s)f(s)\cdot u^j_n(s) dxds \leq
  \frac{1}{4}\int_0^t\left\|\sqrt{\rho^j_n(s)}u^j_n(s)\right\|_{L^2(\Omega)}^2ds + \int_0^t\left\|\sqrt{\rho^j_n(s)}f(s)\right\|_{L^2(\Omega)}^2ds.
\end{equation}
Plugging (\ref{eq:weak:u:j:nc:s-2:f}) into (\ref{eq:weak:u:j:nc:s-1}) and using (\ref{bd:rho0:n:j})-(\ref{bd:un0:j}) and (\ref{bd:Du0:n:j})$_1$, together with (\ref{est:rho:j:Q}), one has
\begin{equation*}
\begin{split}
&
\frac{1}{4}\left\|\sqrt{\rho^j_n(t)}u^j_n(t)\right\|^2_{L^2(\Omega)}+\frac{\kappa}{2}\left\|\nabla u^j_n(t)\right\|^{2}_{L^2(\Omega)}+
\mu\int_{0}^{t}\left\|\nabla u^j_n(s)\right\|^2_{L^2(\Omega)}ds
\leq \\
&
\frac{M^\ast}{2}\left\|u_0\right\|^2_{2,\Omega}+\frac{\kappa}{2}\left\|\nabla u_0\right\|^{2}_{2,\Omega}
+
\frac{1}{2}\int_0^t\left\|\sqrt{\rho^j_n(s)}u^j_n(s)\right\|_{L^2(\Omega)}^2ds +
M^\ast\int_0^t\left\|f(s)\right\|_{L^2(\Omega)}^2ds.
\end{split}
\end{equation*}
Using the Grönwall inequality and taking the supremum in $(0,T)$ in the resulting inequality, we obtain (\ref{est:uj:K1})
for some positive constant $K_1=C\left(M^\ast,\kappa,\left\|u_0\right\|_{2,\Omega},\left\|\nabla u_0\right\|_{2,\Omega},\left\|f\right\|_{L^2(0,T;L^2(\Omega))},T\right)$.

\item
Testing (\ref{eq:weak:u:j:nc:s}) with $\psi=\frac{\partial u^j_n(t)}{\partial t}$, we obtain
\begin{equation}\label{eq:weak:u:j:nc:s-10-1}
\begin{split}
&
\frac{\mu}{2}\frac{d}{dt}\left\|\nabla u^j_n(t)\right\|^{2}_{L^2(\Omega)}+
\left\|\sqrt{\rho^j_n(t)}\frac{\partial u^j_n(t)}{\partial t}\right\|^2_{L^2(\Omega)}+
\kappa\left\|\frac{\partial\nabla u^j_n(t)}{\partial t}\right\|^2_{L^2(\Omega)} = \\
&
J(t)+\int_{\Omega}\rho^j_n(t)f(t)\cdot \frac{\partial u^j_n(t)}{\partial t}\,dx,
\end{split}
\end{equation}
where
\begin{equation*}
J(t):=-\int_{\Omega}\rho^j_n(t)\left(u^j_n(t)\cdot\nabla \right)u^j_n(t)\cdot \frac{\partial u^j_n(t)}{\partial t}dx.
\end{equation*}
To estimate the last term of (\ref{eq:weak:u:j:nc:s-10-1}), we proceed as in (\ref{eq:weak:u:j:nc:s-2:f}) so that
\begin{equation}\label{eq:weak:u:j:nc:s-11}
  \int_{\Omega}\rho^j_n(t)f(t)\cdot \frac{\partial u^j_n(t)}{\partial t}dx \leq
  \frac{1}{2}\left\|\sqrt{\rho^j_n(t)}\frac{\partial u^j_n(t)}{\partial t}\right\|_{L^2(\Omega)}^2 + \frac{1}{2}\left\|\sqrt{\rho^j_n(t)}f(t)\right\|_{L^2(\Omega)}^2.
\end{equation}
For the estimate of $J(t)$, we use (\ref{est:rho:j:Q}) together with the Hölder, Cauchy and Sobolev inequalities so that
\begin{equation}\label{est:J(t):1}
\begin{split}
|J(t)|\leq &
M^\ast\left\|u^j_n(t)\right\|_{L^{d}(\Omega)}\left\|\nabla u^j_n(t)\right\|_{L^2(\Omega)}\left\|\frac{\partial u^j_n(t)}{\partial t}\right\|_{L^{2^\ast}(\Omega)},\qquad 2\leq d\leq 4\\
\leq &
C_1\left\|\nabla u^j_n(t)\right\|^2_{L^2(\Omega)}\left\|\frac{\partial\nabla u^j_n(t)}{\partial t}\right\|_{L^2(\Omega)} \\
\leq &
\frac{\kappa}{2}\left\|\frac{\partial\nabla u^j_n(t)}{\partial t}\right\|_{L^2(\Omega)}^2 +
C_2\left(\sup_{t\in(0,T)}\left\|\nabla u^j_n(t)\right\|^2_{L^2(\Omega)}\right)^2
\end{split}
\end{equation}
for some positive constants $C_1=C(d,M^\ast,\Omega)$ and $C_2=C(M^\ast,\kappa,d,\Omega)$.
Plugging (\ref{eq:weak:u:j:nc:s-11}) and (\ref{est:J(t):1}) into (\ref{eq:weak:u:j:nc:s-10-1}), integrating the resulting inequality between $0$ and $t\in(0,T)$ and using (\ref{ic:u:rho:j}) and the estimate (\ref{est:uj:K1}), we achieve to
\begin{equation*}
\begin{split}
& \mu\left\|\nabla u^j_n(t)\right\|^{2}_{L^2(\Omega)}+\int_0^t\left(\left\|\sqrt{\rho^j_n(s)}\frac{\partial u^j_n}{\partial s}(s)\right\|^2_{L^2(\Omega)}
+\kappa\left\|\frac{\partial\nabla u^j_n(s)}{\partial s}\right\|^2_{L^2(\Omega)}\right)ds \leq \\
&
\mu\left\|\nabla u^j_{0,n}\right\|^{2}_{L^2(\Omega)} + \left\|f(t)\right\|_{L^2(\Omega)}^2 + C,
\end{split}
\end{equation*}
for some positive constant $C=C(M^\ast,\kappa,d,K_1,\Omega,T)$.
Taking the supremum in $(0,T)$ and using (\ref{bd:Du0:n:j})$_1$, there holds
\begin{equation}\label{est:uj:K2:0}
\mu\sup_{t\in(0,T)}\left\|\nabla u^j_n(t)\right\|^{2}_{L^2(\Omega)}+
\int_0^T\left(\left\|\sqrt{\rho^j_n(t)}\frac{\partial u^j_n(t)}{\partial t}\right\|^2_{L^2(\Omega)}+\kappa\left\|\frac{\partial\nabla u^j_n(t)}{\partial t}\right\|^2_{L^2(\Omega)}\right)dt \leq  K_2
\end{equation}
for some positive constant $K_2=C\left(M^\ast,\mu,\kappa,d,\left\|f\right\|_{L^2(0,T;L^2(\Omega))},\Omega,T,K_1\right)$.
In view of (\ref{est:uj:K1}), the relevant information to extract from the estimate (\ref{est:uj:K2:0}) is given by (\ref{est:uj:K2}).

\item
We start by testing (\ref{eq:weak:u:j:nc:s}) with $\psi=\mathbb{A}(u^j_n(t))$, where $\mathbb{A}$ is the Stokes operator considered in (\ref{Stokes:op}),
\begin{equation}\label{est:uj:A:1}
\begin{split}
&
-\kappa\int_{\Omega} \frac{\partial\Delta u^j_n(t)}{\partial t}\cdot\mathbb{A}(u^j_n(t))\,dx
-\mu\int_{\Omega}\Delta u^j_n(t)\cdot\mathbb{A}(u^j_n(t))\,dx  = \\
&
\int_{\Omega}\rho^j_n(t)f(t)\cdot\mathbb{A}(u^j_n(t))\,dx - \int_{\Omega}\rho^j_n(t)\left[\frac{\partial u^j_n(t)}{\partial t}+\left(u^j_n(t)\cdot\nabla\right)u^j_n(t)\right]\cdot \mathbb{A}(u^j_n(t))\,dx.
\end{split}
\end{equation}
Writing $u^j_n$ in the form (\ref{app:sol:uj}) and using the linearity of the Stokes operator (\ref{Stokes:op}), together with (\ref{prop:Leray}) and (\ref{Stokes_bas}), we can show that
\begin{alignat*}{2}
& -\kappa\int_{\Omega} \frac{\partial\Delta u^j_n(t)}{\partial t}\cdot\mathbb{A}(u^j_n(t))\,dx = \frac{\kappa}{2}\frac{d}{dt}\int_{\Omega} |\mathbb{A}(u^j_n(t))|^2\,dx, && \\
& -\mu\int_{\Omega} \Delta u^j_n(t)\cdot\mathbb{A}(u^j_n(t))\,dx = \mu\int_{\Omega} |\mathbb{A}(u^j_n(t))|^2\,dx. &&
\end{alignat*}
Replacing 
in (\ref{est:uj:A:1}), we get
\begin{equation*}
\frac{\kappa}{2}\frac{d}{dt}\left\|\mathbb{A}(u^j_n(t))\right\|^2_{L^2(\Omega)} +\mu \left\|\mathbb{A}(u^j_n(t))\right\|^2_{L^2(\Omega)} =
\int_{\Omega}\rho^j_n(t)\left[f(t)-\frac{\partial u^j_n(t)}{\partial t}-\left(u^j_n(t)\cdot\nabla \right)u^j_n(t)\right]\cdot\mathbb{A}(u^j_n(t))\,dx.
\end{equation*}
Using the Hölder, Cauchy and Minkovski inequalities, together with (\ref{est:rho:j:Q}), one has
\begin{equation*} 
\begin{split}
& \kappa\frac{d}{dt}\left\|\mathbb{A}(u^j_n(t))\right\|^2_{L^2(\Omega)} +\mu \left\|\mathbb{A}(u^j_n(t))\right\|^2_{L^2(\Omega)} \leq \\
& C\left(\left\|f(t)\right\|^2_{L^2(\Omega)} + \left\|\sqrt{\rho^j_n(t)}\frac{\partial u^j_n(t)}{\partial t}\right\|^2_{L^2(\Omega)} +
\int_\Omega|u^j_n(t)|^2|\nabla u^j_n(t)|^2dx\right)
\end{split}
\end{equation*}
for some positive constant $C=C(\mu,M^\ast)$.
Integrating between $0$ and $t\in(0,T)$ and then taking the supremum in $(0,T)$ in the resulting inequality,
\begin{equation}\label{est:uj:A:4}
\begin{split}
& \kappa\sup_{t\in(0,T)}\left\|\mathbb{A}(u^j_n(t))\right\|^2_{L^2(\Omega)} +\mu\int_0^T\left\|\mathbb{A}(u^j_n(t))\right\|^2_{L^2(\Omega)}dt \leq
\kappa\left\|\mathbb{A}(u_{0,n}^j)\right\|^2_{L^2(\Omega)} +
\\
&
 C\left(\int_0^T\left\|f(t)\right\|^2_{L^2(\Omega)}dt + \int_0^T\left\|\sqrt{\rho^j_n(t)}\frac{\partial u^j_n(t)}{\partial t}\right\|^2_{L^2(\Omega)}dt +
\int_0^T\int_\Omega|u^j_n|^2|\nabla u^j_n|^2dxdt\right).
\end{split}
\end{equation}
We now observe that, in view of \eqref{Stokes:est} from one hand, and \eqref{Sob:ineq:Lap:D2}, \eqref{Stokes:op} and (\ref{ic:u:rho:j:u0}) on the other, there holds
\begin{alignat}{2}
\label{Stokes:est:1:nj}
& \left\|D^2(u^j_n(t))\right\|_{L^2(\Omega)} \leq C_1\left\|\mathbb{A}(u^j_n(t))\right\|^2_{L^2(\Omega)}, && \\
\label{Stokes:est:2:nj}
& \left\|\mathbb{A}(u_{0,n}^j)\right\|^2_{L^2(\Omega)}\leq C_2\left\|\Delta u_{0,n}^j\right\|^2_{L^2(\Omega)}\leq C_3\left\|D^2 u_{0,n}^j\right\|^2_{L^2(\Omega)}  &&
\end{alignat}
for some positive constants $C_1=C(\mu,\Omega)$, $C_2=C(\mu)$ and $C_3=C(\mu,d)$.
Note that in the first inequality of (\ref{Stokes:est:2:nj}) we have used the fact that the Leray projection $\mathbb{P}$ commutes with the Laplacian for the eigenfunctions $\psi_{i}\in H^2(\Omega)\cap V$ (see~\eqref{Stokes_bas}), which in turn can be proved by using the symmetry of $\mathbb{P}$, similarly to \eqref{Stokes_bas}.
Hence, by the application of (\ref{Stokes:est:1:nj})-(\ref{Stokes:est:2:nj}) in (\ref{est:uj:A:4}), there holds
\begin{equation}\label{est:uj:A:5}
\begin{split}
& \kappa\sup_{t\in(0,T)}\left\|D^2u^j_n(t)\right\|^2_{L^2(\Omega)} +\mu\int_0^T\left\|D^2u^j_n(t)\right\|^2_{L^2(\Omega)}dt \leq
 C\left(\left\|D^2u_{0,n}^j\right\|^2_{L^2(\Omega)} +\right.
\\
&
\left.\int_0^T\left\|f(t)\right\|^2_{L^2(\Omega)}dt + \int_0^T\left\|\sqrt{\rho^j_n(t)}\frac{\partial u^j_n(t)}{\partial t}\right\|^2_{L^2(\Omega)}dt +
\int_0^T\int_\Omega|u^j_n|^2|\nabla u^j_n|^2dxdt\right)
\end{split}
\end{equation}
for some positive constant $C=C(\mu,\kappa,M^\ast,d,\Omega)$.
On the other hand, by combining the Hölder and Cauchy inequalities with the Sobolev inequalities (\ref{Sob:ineq:gen})-(\ref{Moser:ineq:gen}) and (\ref{Sob:ineq:2:D2}), one has
\begin{equation}\label{est:uj:A:5:int}
\begin{split}
\int_0^T\int_\Omega|u^j_n|^2|\nabla u^j_n|^2dxdt\leq &
\int_0^T\|u^j_n(t)\|_{L^{2^\ast}(\Omega)}\|u^j_n(t)\|_{L^q(\Omega)}\|\nabla u^j_n(t)\|_{L^{2}(\Omega)}\|\nabla u^j_n(t)\|_{L^{2^\ast}(\Omega)}dt \\
&
\left(\mbox{for}\ \
\frac{1}{2^\ast}+\frac{1}{q}+\frac{1}{2}+\frac{1}{2^\ast}= 1 \Leftrightarrow q=\frac{2d}{4-d},\ \ 2\leq d\leq 3\right)\\
& \leq
C_1\sup_{t\in(0,T)}\|\nabla u^j_n(t)\|_{L^2(\Omega)}^2\int_0^t\|\nabla u^j_n(t)\|_{L^2(\Omega)}\|D^2 u^j_n(t)\|_{L^2(\Omega)}dt \\
&
\leq
C_2\int_0^t\|\nabla u^j_n(t)\|_{L^2(\Omega)}\|D^2 u^j_n(t)\|_{L^2(\Omega)}dt\\
&
\leq
C_3\int_0^t\|\nabla u^j_n(t)\|_{L^2(\Omega)}^2dt + \frac{\mu}{2C}\int_0^t\|D^2 u^j_n(t)\|_{L^2(\Omega)}^2dt,
\end{split}
\end{equation}
for some positive constants $C_1=C(d,\Omega)$, $C_2=C(\kappa,d,\Omega,K_1)$ and $C_3=C(\mu,\kappa,M^\ast,d,\Omega,K_1)$, and where $C$ is the positive constant from (\ref{est:uj:A:5}).
Plugging \eqref{est:uj:A:5:int} into \eqref{est:uj:A:5} and using \eqref{bd:Du0:n:j}$_2$, together with the estimates \eqref{est:uj:K1} and \eqref{est:uj:K2}, we prove that (\ref{est:uj:K3}) is verified for some positive constant $K_3=C(\mu,\kappa,M^\ast,d,\Omega,\left\|D^2u_0\right\|_{L^2(\Omega)},\break \left\|f\right\|_{L^2(0,T;L^2(\Omega))},K_1,K_2)$.

\item
In this case, we use the Hölder and Cauchy inequalities, together with (\ref{est:rho:j:Q}), to estimate the r.h.s. terms of (\ref{eq:weak:u:j:nc:s-10-1}) as follows,
\begin{alignat}{2}
&
\label{eq:weak:u:j:nc:s-11-1}
  \int_{\Omega}\rho^j_n(t)f(t)\cdot \frac{\partial u^j_n(t)}{\partial t}dx \leq
  \frac{1}{4}\left\|\sqrt{\rho^j_n(t)}\frac{\partial u^j_n(t)}{\partial t}\right\|_{L^2(\Omega)}^2 + \left\|\sqrt{\rho^j_n(t)}f(t)\right\|_{L^2(\Omega)}^2, && \\
& \label{est:J(t):1-1}
|J(t)|\leq
\frac{1}{4}\left\|\sqrt{\rho^j_n(t)}\frac{\partial u^j_n(t)}{\partial t}\right\|^2_{L^2(\Omega)}+M^\ast\int_{\Omega}|u^j_n(t)|^2|\nabla u^j_n(t)|^2dx. &&
\end{alignat}
Plugging (\ref{eq:weak:u:j:nc:s-11-1}) and (\ref{est:J(t):1-1}) into (\ref{eq:weak:u:j:nc:s-10-1}), and integrating the resulting inequality between $0$ and $t\in(0,T)$, we obtain
\begin{equation}\label{eq:weak:u:j:nc:s-12}
\begin{split}
&
\mu\left\|\nabla u^j_n(t)\right\|^{2}_{L^2(\Omega)}+\int_0^t\left(\left\|\sqrt{\rho^j_n(s)}\frac{\partial u^j_n}{\partial s}(s)\right\|^2_{L^2(\Omega)}
+\kappa\left\|\frac{\partial\nabla u^j_n(s)}{\partial s}\right\|^2_{L^2(\Omega)}\right)ds \leq \\
&
\mu\left\|\nabla u^j_{0,n}\right\|^{2}_{L^2(\Omega)} + 2\int_0^t\left\|f(s)\right\|_{L^2(\Omega)}^2ds +
2M^\ast\int_0^t\int_\Omega|u^j_n|^2|\nabla u^j_n|^2dxds.
\end{split}
\end{equation}

\noindent On the other hand, observing that $\partial\Omega\in C^2$, we can use Lemma~\ref{Lem:reg:St} to prove the existence of a unique weak solution $(w,p):$ $w\in H^2(\Omega)$ and $p\in H^1(\Omega)$, with $\int_{\Omega}p(x)dx=0$, for the stationary Stokes problem
\begin{align}
&
\operatorname{div}w =0 \qquad\mbox{in}\quad \Omega, \label{St:inc:n} \\
& -\mu\Delta w + \nabla p = \rho^j_n(t) f(t) -
\left[\rho^j_n(t)\frac{\partial u^j_n(t)}{\partial t} - \rho^j_n(t)\left(u^j_n(t)\cdot\nabla\right)u^j_n(t)\right]
\qquad\mbox{in}\quad \Omega, \label{St:mom:n} \\
& w=0 \qquad\mbox{on}\quad  \partial\Omega. \label{St:bc:n}
\end{align}
Moreover, there exists a positive constant $C_2=C(\mu,\Omega)$ such that
\begin{equation}\label{Stokes:est:r:w}
\left\|w\right\|_{H^2(\Omega)} + \left\| p\right\|_{H^1(\Omega)} \leq
C_2\left\|\rho^j_n(t)f(t) -\left[\rho^j_n(t)\frac{\partial u^j_n(t)}{\partial t} - \rho^j_n(t)\left(u^j_n(t)\cdot\nabla\right)u^j_n(t)\right]\right\|_{L^2(\Omega)}.
\end{equation}
Note that for any $t\in(0,T)$, $w=u^j_n(t)+\sigma \frac{\partial u^j_n(t)}{\partial t}$ and $p=p^j_n(t)$, with $\sigma=\frac{\kappa}{\mu}$, satisfy the Stokes problem (\ref{St:inc:n})-(\ref{St:bc:n}) and therefore (\ref{Stokes:est:r:w}) implies
\begin{equation}\label{Equ:D2}
\begin{split}
&
\mu^2\left\|D^2u^j_n(t)\right\|_{L^2(\Omega)}^2 + \mu\kappa\frac{d}{dt}\left\|D^2u^j_n(t)\right\|_{L^2(\Omega)}^2 + \kappa^2\left\|\frac{\partial D^2u^j_n(t)}{\partial t}\right\|_{L^2(\Omega)}^2\leq \\
&
C_3\left(M^\ast\left\|f(t)\right\|_{L^2(\Omega)}^2 + \left\|\sqrt{\rho^j_n(t)}\frac{\partial u^j_n(t)}{\partial t}\right\|_{L^2(\Omega)}^2 +
M^\ast\int_{\Omega}|u^j_n(t)|^2|\nabla u^j_n(t)|^2dx\right),
\end{split}
\end{equation}
where $C_3=C(\mu,M^\ast,\Omega)$ is a positive constant.
Integrating between $0$ and $t\in(0,T)$, and using (\ref{ic:u:rho:j}), one has
\begin{equation}\label{est:uj+uj_t:St}
\begin{split}
&
\mu^2\int_0^t\left\|D^2u^j_n(s)\right\|_{L^2(\Omega)}^2ds + \mu\kappa\left\|D^2u^j_n(t)\right\|_{L^2(\Omega)}^2 +
\kappa^2\int_0^t\left\|\frac{\partial D^2u^j_n(s)}{\partial s}\right\|_{L^2(\Omega)}^2ds \leq \\
&
\mu\kappa\left\|D^2u^j_{0,n}\right\|_{L^2(\Omega)}^2 +
C_3\left(M^\ast\int_0^t\left\|f(s)\right\|_{L^2(\Omega)}^2ds + \int_0^t\left\|\sqrt{\rho^j_n(s)}\frac{\partial u^j_n}{\partial s}(s)\right\|_{L^2(\Omega)}^2ds \right. \\
& \left.+
M^\ast\int_0^t\int_{\Omega}|u^j_n|^2|\nabla u^j_n|^2dxds\right).
\end{split}
\end{equation}

\noindent Choosing $\delta>0$ so small that $C_3\delta<\frac{1}{2}$, we obtain from (\ref{eq:weak:u:j:nc:s-12}) and (\ref{est:uj+uj_t:St})
\begin{equation}\label{eq:weak:u:j:nc:s-13}
\begin{split}
&
\mu\left\|\nabla u^j_n(t)\right\|^{2}_{L^2(\Omega)}+
\int_0^t\left(\frac{1}{2}\left\|\sqrt{\rho^j_n(s)}\frac{\partial u^j_n}{\partial s}(s)\right\|^2_{L^2(\Omega)}+
2\kappa\left\|\frac{\partial\nabla u^j_n(s)}{\partial s}\right\|^2_{L^2(\Omega)}\right)ds  + \\
&
\delta\left(\mu^2\int_0^t\left\|D^2u^j_n(s)\right\|_{L^2(\Omega)}^2ds + \mu\kappa\left\|D^2u^j_n(t)\right\|_{L^2(\Omega)}^2 + \kappa^2\int_0^t\left\|\frac{\partial D^2u^j_n(s)}{\partial s}\right\|_{L^2(\Omega)}^2ds\right)
\leq \\
&
\mu\left\|\nabla u^j_{0,n}\right\|^{2}_{L^2(\Omega)} +
\delta\mu\kappa\left\|D^2u^j_{0,n}\right\|_{L^2(\Omega)}^2 +
C_4\int_0^t\left\|f(s)\right\|_{L^2(\Omega)}^2ds
+ C_5\int_0^t\int_{\Omega}|u^j_n(s)|^2|\nabla u^j_n(s)|^2dxds
\end{split}
\end{equation}
where $C_4=C(M^\ast)$ and $C_5=C(M^\ast)$ are distinct positive constants.
To estimate the last term, we proceed as in (\ref{est:uj:A:5:int}), but using in the final part the Cauchy inequality with $\delta$.
Hence, we get
\begin{equation}\label{est:uj:A:5:int:0}
\begin{split}
\int_0^t\int_\Omega|u^j_n(s)|^2|\nabla u^j_n(s)|^2dxds\leq &
C_6\int_0^t\|\nabla u^j_n(s)\|_{L^2(\Omega)}\|D^2 u^j_n(s)\|_{L^2(\Omega)}ds\\
\leq
&
C_6\left(\frac{2C_7\mu^2}{\delta}\int_0^t\left\|\nabla u^j_n(s)\right\|_{L^2(\Omega)}^2ds +
\frac{\delta\mu^2}{2C_7}\int_0^t\left\|D^2 u^j_n(s)\right\|_{L^2(\Omega)}^2ds\right),
\end{split}
\end{equation}
where $C_6=\frac{C_2}{\mu}$ and $C_7=C_6C_5$, being $C_2$ given in (\ref{est:uj:A:5:int}) and $C_5$ in (\ref{eq:weak:u:j:nc:s-13}).
Plugging (\ref{est:uj:A:5:int:0}) into (\ref{eq:weak:u:j:nc:s-13}), choosing $\delta$ in the above conditions, taking the supremum in $(0,T)$ of the resulting inequality and using (\ref{bd:Du0:n:j}) and, again, (\ref{est:uj:K1}), we obtain
\begin{equation}\label{est:uj:K4:0}
\begin{split}
&
\mu\sup_{t\in(0,T)}\left(\left\|\nabla u^j_n(t)\right\|^{2}_{L^2(\Omega)}+\kappa\left\|D^2u^j_n(t)\right\|^{2}_{L^2(\Omega)}\right) \\
&
+\int_0^T\left(\left\|\sqrt{\rho^j_n(t)}\frac{\partial u^j_n(t)}{\partial t}\right\|^2_{L^2(\Omega)}+\kappa\left\|\frac{\partial\nabla u^j_n(t)}{\partial t}\right\|^2_{L^2(\Omega)}\right)dt  \\
&
+ \mu^2\int_0^T\left\|D^2u^j_n(t)\right\|_{L^2(\Omega)}^2dt + \kappa^2\int_0^T\left\|\frac{\partial D^2u^j_n(t)}{\partial t}\right\|_{L^2(\Omega)}^2dt
\leq K_4,
\end{split}
\end{equation}
for some positive constant $K_4=C\left(M^\ast,\mu,\kappa,d,\Omega,\left\|u_0\right\|_{2,\Omega},\left\|\nabla u_0\right\|_{2,\Omega},\left\|D^2u _{0}\right\|_{L^2(\Omega)},\left\|f\right\|_{L^2(0,T;L^2(\Omega))},K_1\right)$.
\end{enumerate}
In view of (\ref{est:uj:K1}), (\ref{est:uj:K2}) and (\ref{est:uj:K3}), the relevant information to extract from the estimate (\ref{est:uj:K4:0}) is written in (\ref{est:uj:K4}).
\end{proof}

\begin{remark}\label{rem:K4'}
\rm
Estimate \eqref{est:uj:K4} and part of (\ref{est:uj:K3}) can be obtained by using a slightly different approach.
In fact, testing (\ref{eq:weak:u:j:nc:s}) with $\psi=\mathbb{A}\left(\frac{\partial u_n^j(t)}{\partial t}\right)$, where $\mathbb{A}$ is the Stokes operator considered in (\ref{Stokes:op}), we obtain
\begin{equation*}
\begin{split}
&  \frac{\mu}{2}\frac{d}{dt}\left\|\mathbb{A}(u^j_n(t))\right\|^2_{L^2(\Omega)} +\kappa \left\|\mathbb{A}\left(\frac{\partial u_n^j(t)}{\partial t}\right)\right\|^2_{L^2(\Omega)} = \\
&
\int_{\Omega}\rho^j_n(t)\left[f(t)-\frac{\partial u^j_n(t)}{\partial t}-\left(u^j_n(t)\cdot\nabla \right)u^j_n(t)\right]\cdot\mathbb{A}\left(\frac{\partial u_n^j(t)}{\partial t}\right)\,dx.
\end{split}
\end{equation*}
Arguing as we did for (\ref{est:uj:A:4}), we have
\begin{equation*}
\begin{split}
& \frac{\mu}{2}\sup_{t\in(0,T)}\left\|\mathbb{A}(u^j_n(t))\right\|^2_{L^2(\Omega)}
+\frac{\kappa}{2}\int_0^T\left\|\mathbb{A}\left(\frac{\partial u_n^j(t)}{\partial t}\right)\right\|^2_{L^2(\Omega)}dt \leq
 C\left(\left\|A(u_{0,n}^j)\right\|^2_{L^2(\Omega)} +\right.
\\
&
\left.\int_0^T\left\|f(t)\right\|^2_{L^2(\Omega)}dt + \int_0^T\left\|\sqrt{\rho^j_n(t)}\frac{\partial u^j_n(t)}{\partial t}\right\|^2_{L^2(\Omega)}dt +
\int_0^T\int_\Omega|u^j_n|^2|\nabla u^j_n|^2dxdt\right),
\end{split}
\end{equation*}
for some positive constant $C=C(\kappa)$.
Next, by using (\ref{Stokes:est:1:nj})-(\ref{Stokes:est:2:nj}) and
\begin{equation*}
\left\|D^2\left(\frac{\partial u_n^j(t)}{\partial t}\right)\right\|_{L^2(\Omega)} \leq C\left\|\mathbb{A}\left(\frac{\partial u_n^j(t)}{\partial t}\right)\right\|^2_{L^2(\Omega)},
\end{equation*}
where $C=C(\mu,\Omega)$ is a positive constant, we obtain
\begin{equation}\label{0:est:uj:A:4}
\begin{split}
& \mu\sup_{t\in(0,T)}\left\|D^2\left(u^j_n(t)\right)\right\|^2_{L^2(\Omega)} +
\kappa\int_0^T\left\|\frac{\partial D^2\left(u^j_n(t)\right)}{\partial t}\right\|^2_{L^2(\Omega)}dt \leq
C\left(\left\|D^2(u_{0,n}^j)\right\|^2_{L^2(\Omega)} +\right. \\
&
\left.
\int_0^T\left\|f(t)\right\|^2_{L^2(\Omega)}dt + \int_0^T\left\|\sqrt{\rho^j_n(t)}\frac{\partial u^j_n(t)}{\partial t}\right\|^2_{L^2(\Omega)}dt + \int_0^T\int_\Omega|u^j_n|^2|\nabla u^j_n|^2dxdt\right).
\end{split}
\end{equation}
To estimate the last term, we proceed in a slightly different way than in (\ref{est:uj:A:5:int}),
\begin{equation}\label{est:uj:A:5:int:1}
\begin{split}
\int_0^T\int_\Omega|u^j_n|^2|\nabla u^j_n|^2dxdt\leq &
C_1\sup_{t\in(0,T)}\|\nabla u^j_n(t)\|_{L^2(\Omega)}^2\int_0^T\|\nabla u^j_n(t)\|_{L^2(\Omega)}\|D^2 u^j_n(t)\|_{L^2(\Omega)}dt\\
\leq &
C_2\left(\int_0^T\|\nabla u^j_n(t)\|_{L^2(\Omega)}^2dt+\frac{\mu}{2CC_2T}\int_0^T\|D^2 u^j_n(t)\|_{L^2(\Omega)}^2dt\right) \\
\leq &
C_2\int_0^T\|\nabla u^j_n(t)\|_{L^2(\Omega)}^2dt+\frac{\mu}{2C}\sup_{t\in(0,T)}\|D^2 u^j_n(t)\|_{L^2(\Omega)}^2,
\end{split}
\end{equation}
for some positive constants $C_1=C(d,\Omega)$, $C_2=C(\mu,d,\Omega,T,K_1)$, and where $C$ is the constant from (\ref{0:est:uj:A:4}).
Plugging (\ref{est:uj:A:5:int:1}) into (\ref{0:est:uj:A:4}), and using (\ref{bd:Du0:n:j})$_2$, together with the estimates \eqref{est:uj:K1} and \eqref{est:uj:K2}, we prove that
\begin{equation}\label{est:uj:K4:00}
\mu \sup_{t\in(0,T)}\left\|D^2u^j_n(t)\right\|^2_{L^2(\Omega)} +\kappa\int_0^T\left\|\frac{\partial D^2\left(u^j_n(t)\right)}{\partial t}\right\|^2_{L^2(\Omega)}dt \leq K
\end{equation}
for some positive constant $K=C(\mu,M^\ast,d,\Omega,T,\left\|D^2u_0\right\|_{L^2(\Omega)}, \left\|f\right\|_{L^2(0,T;L^2(\Omega))}, K_1,K_2)$.
Finally, that (\ref{est:uj:K4}) follows from (\ref{est:uj:K4:00}) is immediate.
\end{remark}

Before proceeding with the proof of the estimates (\ref{est:uj:K2'}) and (\ref{est:uj:K4'}), it should be stressed that, despite the reasoning is more direct when the estimate
(\ref{est:uj:K4}) is obtained by the approach described in Remark~\ref{rem:K4'}, we lose some information, when we compare the estimate (\ref{est:uj:K4:00}), which gives rise to it by this method, with the estimate (\ref{est:uj:K4:0}), from which (\ref{est:uj:K4}) follows by the approach used in the proof of Proposition~\ref{prop:est:j:L2}.
More importantly, the method used in the proof of Proposition~\ref{prop:est:j:L2} will be of the utmost importance to prove the estimate (\ref{est:uj:K4'}) below.

\begin{proof}(Proposition~\ref{prop:est:j:Linfty})
Here we also shall split the proof into the two enumerated items.
\begin{enumerate}
[leftmargin=0pt,itemindent=*,label=(\arabic*)]
\item
We rewrite the identity \eqref{eq:weak:u:j:nc:s-10-1} as follows,
\begin{equation}\label{0eq:weak:u:j:nc:s-10:1}
\begin{split}
& \left\|\sqrt{\rho^j_n(t)}\frac{\partial u^j_n(t)}{\partial t}\right\|^2_{L^2(\Omega)}+\kappa\left\|\frac{\partial\nabla u^j_n(t)}{\partial t}\right\|^2_{L^2(\Omega)} = \\
&
-\mu\int_{\Omega}\nabla u^j_n(t):\frac{\partial \nabla u_n^j(t)}{\partial t}\,dx + J(t) + \int_{\Omega}\rho^j_n(t)f(t)\cdot \frac{\partial u^j_n(t)}{\partial t}\,dx.
\end{split}
\end{equation}
Proceeding as we did for \eqref{est:J(t):1}, we get
\begin{equation}\label{0est:J(t):1}
\begin{split}
|J(t)|\leq
\frac{\kappa}{4}\left\|\frac{\partial\nabla u^j_n(t)}{\partial t}\right\|_{L^2(\Omega)}^2 +
C\left(\sup_{t\in(0,T)}\left\|\nabla u^j_n(t)\right\|^2_{L^2(\Omega)}\right)^2,
\end{split}
\end{equation}
for some positive constant $C=C(M^\ast,\kappa,d,\Omega)$.
Using the Cauchy inequality, one has
\begin{equation}\label{0est:J1(t):1}
\left|-\mu\int_{\Omega}\nabla u^j_n(t):\frac{\partial \nabla u_n^j(t)}{\partial t}\,dx\right|\leq
\frac{\kappa}{4}\left\|\frac{\partial\nabla u^j_n(t)}{\partial t}\right\|_{L^2(\Omega)}^2 +
\frac{\mu^2}{\kappa}\left\|\nabla u^j_n(t)\right\|^2_{L^2(\Omega)}.
\end{equation}
Plugging \eqref{eq:weak:u:j:nc:s-11} and \eqref{0est:J(t):1}-\eqref{0est:J1(t):1} into \eqref{0eq:weak:u:j:nc:s-10:1}, we get
\begin{equation}\label{est:uj:K2':1}
\begin{split}
& \frac{1}{2}\left\|\sqrt{\rho^j_n(t)}\frac{\partial u^j_n(t)}{\partial t}\right\|_{L^2(\Omega)}^2 +
\frac{\kappa}{2}\left\|\frac{\partial\nabla u^j_n(t)}{\partial t}\right\|^2_{L^2(\Omega)} \leq \\
&
\frac{\mu^2}{\kappa}\left\|\nabla u^j_n(t)\right\|^2_{L^2(\Omega)} +
C\left(\sup_{t\in(0,T)}\left\|\nabla u^j_n(t)\right\|^2_{L^2(\Omega)}\right)^2 +
\frac{1}{2}\left\|\sqrt{\rho^j_n(t)}f(t)\right\|_{L^2(\Omega)}^2.
\end{split}
\end{equation}
Taking the supremum  in $(0,T)$ of (\ref{est:uj:K2':1}) and using the hypothesis (\ref{Hyp:f:Linfty}), together with the estimates (\ref{est:rho:j:Q}) and (\ref{est:uj:K1}), we show that (\ref{est:uj:K2'}) holds true for some positive constant
$K_2'=C\left(M^\ast,\mu,\kappa,d,\Omega,\left\|f\right\|_{L^\infty(0,T;L^2(\Omega))},K_1\right)$.

\item
To prove this case, we first observe that \eqref{Equ:D2} can be written as follows,
\begin{equation}\label{0:Equ:D2}
\begin{split}
&
\mu^2\left\|D^2u^j_n(t)\right\|_{L^2(\Omega)}^2  + \kappa^2\left\|\frac{\partial D^2u^j_n(t)}{\partial t}\right\|_{L^2(\Omega)}^2\leq
-2\mu\kappa\int_\Omega D^2u^j_n(t):\frac{\partial D^2u^j_n(t)}{\partial t}\,dx  \\
&
+
C_3\left(M^\ast\left\|f(t)\right\|_{L^2(\Omega)}^2 + \left\|\sqrt{\rho^j_n(t)}\frac{\partial u^j_n(t)}{\partial t}\right\|_{L^2(\Omega)}^2 +
M^\ast\int_{\Omega}|u^j_n(t)|^2|\nabla u^j_n(t)|^2dx\right).
\end{split}
\end{equation}
Proceeding as we did for (\ref{est:uj:A:5:int}), we get
\begin{equation}\label{0:Equ:D2:2}
\begin{split}
&
\kappa^2\left\|\frac{\partial D^2u^j_n(t)}{\partial t}\right\|_{L^2(\Omega)}^2\leq \\
&
C_4\left(
\left\|f(t)\right\|_{L^2(\Omega)}^2 +
\left\|\sqrt{\rho^j_n(t)}\frac{\partial u^j_n(t)}{\partial t}\right\|_{L^2(\Omega)}^2 +
\left\|\nabla u^j_n(s)\right\|_{L^2(\Omega)}^2 + \left\|D^2u^j_n(t)\right\|_{L^2(\Omega)}^2\right)
\end{split}
\end{equation}
for some positive constant $C_4=C(\mu,\kappa,M^\ast,d,\Omega,K_1)$.
\end{enumerate}
Taking the supremum of \eqref{0:Equ:D2:2} in $(0,T)$, and using the assumption (\ref{Hyp:f:Linfty}), together with the estimates (\ref{est:rho:j:Q}), \eqref{est:uj:K1}, \eqref{est:uj:K3} and (\ref{est:uj:K2'}), we prove that (\ref{est:uj:K4'}) holds for some positive constant $K_4'=C(\mu,\kappa,M^\ast,d,\Omega,K_1,K_3,K_2'))$.
\end{proof}

\begin{proposition}
Assume $2\leq d\leq 3$ and let $u^j_n$, $\rho^j_n$ and $p^j_n$ be the approximate weak solutions of the problem (\ref{NSV:inc:n})-(\ref{NSV:bc:n}) that have been found in
(\ref{app:sol:uj}), (\ref{app:sol:rhoj}) and (\ref{exist:p:n:j}).
\begin{enumerate}
[leftmargin=0pt,itemindent=*,label=(\arabic*)]
\item If (\ref{mu:k>0}), (\ref{cond:r0}), (\ref{Hyp:u0:V}) and (\ref{Hyp:f:L2}) hold,
then there exists an independent of $j$ (and $n$) positive constant $K_5$ such that
\begin{equation}\label{est:uj:K5:p}
\int_0^T\left\|\nabla p^j_n(t)\right\|_{L^2(\Omega)}^2dt \leq K_5
\end{equation}
\item If (\ref{mu:k>0}), (\ref{cond:r0}), (\ref{Hyp:u0:V}) and (\ref{Hyp:f:Linfty}) hold, then there exists an independent of $j$ (and $n$) positive constant $K_6$ such that
\begin{equation}\label{est:uj:K6:p}
\sup_{t\in[0,T]}\left\|\nabla p^j_n(t)\right\|_{L^2(\Omega)}^2 \leq  K_6.
\end{equation}
\end{enumerate}
\end{proposition}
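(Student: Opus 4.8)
The plan is to recover $\nabla p^j_n$ from the momentum equation by the stationary Stokes estimate of Lemma~\ref{Lem:reg:St}, following exactly the route already used to derive \eqref{Equ:D2}, and then to control the resulting right-hand side by the $j$- and $n$-independent bounds of Propositions~\ref{prop:est:j:L2} and~\ref{prop:est:j:Linfty}.

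\emph{Step 1: a pointwise-in-time bound for $\nabla p^j_n$.} Fix $t\in(0,T)$. As was observed in the proof of Proposition~\ref{prop:est:j:L2}(5), the pair $(w,p^j_n(t))$ with $w=u^j_n(t)+\frac{\kappa}{\mu}\frac{\partial u^j_n(t)}{\partial t}$ solves the Stokes system \eqref{St:inc:n}--\eqref{St:bc:n}, so Lemma~\ref{Lem:reg:St} (where the hypothesis $\partial\Omega\in C^2$ is used) applies and gives \eqref{Stokes:est:r:w}; the decisive point is that the viscous and relaxation contributions $\mu\Delta u^j_n+\kappa\,\partial_t\Delta u^j_n$ to $\nabla p^j_n$ have been absorbed into $-\mu\Delta w$, so \emph{no} second-order derivatives of $u^j_n$ appear on the right-hand side. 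Since $\|\nabla p^j_n(t)\|_{L^2(\Omega)}\le\|p^j_n(t)\|_{H^1(\Omega)}$ and $0\le\rho^j_n\le M^\ast$ by \eqref{est:rho:j:Q}, bounding each term on the right of \eqref{Stokes:est:r:w} — in particular $\|\rho^j_n(u^j_n\cdot\nabla)u^j_n\|_{L^2(\Omega)}\le M^\ast\big(\int_\Omega|u^j_n|^2|\nabla u^j_n|^2\,dx\big)^{1/2}$ — I would reach
\begin{equation*}
\left\|\nabla p^j_n(t)\right\|_{L^2(\Omega)}^2\le C\left(\left\|f(t)\right\|_{L^2(\Omega)}^2+\left\|\sqrt{\rho^j_n(t)}\,\frac{\partial u^j_n(t)}{\partial t}\right\|_{L^2(\Omega)}^2+\int_\Omega|u^j_n(t)|^2|\nabla u^j_n(t)|^2\,dx\right),
\end{equation*}
with $C=C(\mu,\kappa,M^\ast,\Omega)$ --- that is, precisely the right-hand side of \eqref{Equ:D2}, but with the simpler left-hand side $\|\nabla p^j_n(t)\|_{L^2(\Omega)}^2$. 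Measurability of $t\mapsto\|\nabla p^j_n(t)\|_{L^2(\Omega)}$ follows from the continuity of the Stokes solution map together with the $C^1$-in-$t$ regularity recorded in \eqref{app:sol:uj}--\eqref{app:sol:rhoj}.

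\emph{Step 2: proof of \eqref{est:uj:K5:p}.} Integrating the pointwise bound of Step~1 over $(0,T)$, the first term is controlled by hypothesis \eqref{Hyp:f:L2}, the second by \eqref{est:uj:K2}, and the third, $\int_0^T\int_\Omega|u^j_n|^2|\nabla u^j_n|^2\,dx\,dt$, is estimated exactly as in \eqref{est:uj:A:5:int} --- this is where the restriction $2\le d\le 3$ enters, through $V\hookrightarrow L^{2^\ast}(\Omega)$, $V\hookrightarrow L^{2d/(4-d)}(\Omega)$ and \eqref{Sob:ineq:2:D2} --- which bounds it in terms of $K_1$ and $\int_0^T\|D^2u^j_n\|_{L^2(\Omega)}^2\,dt\le K_3/\mu$ by \eqref{est:uj:K3}. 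All three contributions being independent of $j$ and $n$, this yields \eqref{est:uj:K5:p}.

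\emph{Step 3: proof of \eqref{est:uj:K6:p}.} Taking the supremum over $t\in[0,T]$ in the pointwise bound of Step~1, the first term is now controlled by \eqref{Hyp:f:Linfty} and the second by \eqref{est:uj:K2'}. For the last term I would use the interpolation of \eqref{est:uj:A:5:int} pointwise in $t$, namely $\int_\Omega|u^j_n(t)|^2|\nabla u^j_n(t)|^2\,dx\le C\|\nabla u^j_n(t)\|_{L^2(\Omega)}^3\|D^2u^j_n(t)\|_{L^2(\Omega)}\le C\big(\|\nabla u^j_n(t)\|_{L^2(\Omega)}^6+\|D^2u^j_n(t)\|_{L^2(\Omega)}^2\big)$ by Young's inequality, whose supremum over $t$ is bounded by a constant depending on $K_1$ and $K_3$ via \eqref{est:uj:K1} and \eqref{est:uj:K3}. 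This gives \eqref{est:uj:K6:p}.

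\emph{Expected difficulty.} Thanks to the relaxation term $\kappa\Delta u_t$ there is in fact no real obstacle: the term $\mu\Delta u^j_n+\kappa\,\partial_t\Delta u^j_n$ contributing to $\nabla p^j_n$ is exactly $-\mu\Delta w$ with $w=u^j_n+\frac{\kappa}{\mu}\partial_t u^j_n$, so the Stokes right-hand side involves only quantities already under control, and no a~priori estimate beyond those of Propositions~\ref{prop:est:j:L2} and~\ref{prop:est:j:Linfty} is needed. The only points requiring care are the justification that this right-hand side belongs to $L^2(\Omega)$ for a.e.\ $t$, so that Lemma~\ref{Lem:reg:St} may be invoked --- guaranteed by the finite-dimensionality and $C^1$-in-$t$ regularity of $u^j_n$ and by $\rho^j_n\in C^1([0,T);C^1(\overline{\Omega}))$ --- and the fact that controlling the convective term in \eqref{est:uj:K5:p} rests on \eqref{est:uj:K3}, which is available under the standing hypotheses of this section.
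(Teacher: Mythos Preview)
Your proof is correct and follows the same route as the paper: recover $\nabla p^j_n$ from the Stokes estimate applied to $w=u^j_n+\frac{\kappa}{\mu}\partial_t u^j_n$, then control the right-hand side via the previously established bounds. The only cosmetic difference is that for part~(1) the paper re-runs the full derivation leading to \eqref{est:uj:K4:0} with the pressure term kept on the left (absorbing the $\|D^2u^j_n\|^2$ contribution from the convective term via a small-$\delta$ argument), whereas you isolate $\|\nabla p^j_n\|^2$ directly and invoke the already-proved bound \eqref{est:uj:K3}; your version is a mild streamlining of the same argument. One caveat worth flagging: your Step~2 relies on \eqref{est:uj:K3}, which in Proposition~\ref{prop:est:j:L2}(4) requires the stronger hypothesis \eqref{Hyp:u0:V+H2}, while the present statement only lists \eqref{Hyp:u0:V}---but the paper's own proof has exactly the same dependence (it sets $K_5=K_4$, and $K_4$ involves $\|D^2u_0\|_{L^2(\Omega)}$), so this is an issue with the statement rather than with your argument.
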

\begin{proof}
\begin{enumerate}
[leftmargin=0pt,itemindent=*,label=(\arabic*)]
\item
Going back a little bit, we recall the Stokes problem (\ref{St:inc:n})-(\ref{St:bc:n}) and the associated regularity result (\ref{Stokes:est:r:w}).
Similarly to (\ref{Equ:D2}), (\ref{Stokes:est:r:w}) also implies
\begin{equation}\label{Equ:D2:R}
\begin{split}
&
\mu^2\left\|D^2u^j_n(t)\right\|_{L^2(\Omega)}^2 + \mu\kappa\frac{d}{dt}\left\|D^2u^j_n(t)\right\|_{L^2(\Omega)}^2 +
\kappa^2\left\|\frac{\partial D^2u^j_n(t)}{\partial t}\right\|_{L^2(\Omega)}^2
+ \left\|\nabla p^j_n(t)\right\|_{L^2(\Omega)}^2
\leq \\
&
C_3\left(\left\|f(t)\right\|_{L^2(\Omega)}^2 + \left\|\sqrt{\rho^j_n(t)}\frac{\partial u^j_n(t)}{\partial t}\right\|_{L^2(\Omega)}^2 +
\int_{\Omega}|u^j_n(t)|^2|\nabla u^j_n(t)|^2dx\right)
\end{split}
\end{equation}
for some positive constant $C_3=C(\mu,M^\ast,\Omega)$.
Departing from (\ref{Equ:D2:R}), and proceeding exactly in the same way as we did for (\ref{est:uj:K4:0}), we can also show that
\begin{equation}\label{est:uj:K4:0:p}
\begin{split}
&
\mu\sup_{t\in(0,T)}\left(\left\|\nabla u^j_n(t)\right\|^{2}_{L^2(\Omega)}+\kappa\left\|D^2u^j_n(t)\right\|^{2}_{L^2(\Omega)}\right) \\
&
+\int_0^T\left(\left\|\sqrt{\rho^j_n(t)}\frac{\partial u^j_n(t)}{\partial t}\right\|^2_{L^2(\Omega)}+\kappa\left\|\frac{\partial\nabla u^j_n(t)}{\partial t}\right\|^2_{L^2(\Omega)}\right)dt  \\
&
+ \mu^2\int_0^T\left\|D^2u^j_n(t)\right\|_{L^2(\Omega)}^2dt + \kappa^2\int_0^T\left\|\frac{\partial D^2u^j_n(t)}{\partial t}\right\|_{L^2(\Omega)}^2dt
+ \int_0^T\left\|\nabla p^j_n(t)\right\|_{L^2(\Omega)}^2dt
\leq K_4,
\end{split}
\end{equation}
for the same positive constant $K_4$.
Hence, (\ref{est:uj:K5:p}) follows immediately from (\ref{est:uj:K4:0:p}), with $K_5=K_4$ and for $K4$ given in (\ref{est:uj:K4}).

\item
In this case, and similarly to (\ref{0:Equ:D2}), we can write (\ref{Equ:D2:R}) as follows,
\begin{equation}\label{est:uj+uj_t:R-5}
\begin{split}
&
\mu^2\left\|D^2u^j_n(t)\right\|_{L^2(\Omega)}^2  + \kappa^2\left\|\frac{\partial D^2u^j_n(t)}{\partial t}\right\|_{L^2(\Omega)}^2
+
\left\|\nabla p^j_n(t)\right\|_{L^2(\Omega)}^2\leq
-2\mu\kappa\int_\Omega D^2u^j_n(t):\frac{\partial D^2u^j_n(t)}{\partial t}\,dx  \\
&
+
C_3\left(\left\|f(t)\right\|_{L^2(\Omega)}^2 + \left\|\sqrt{\rho^j_n(t)}\frac{\partial u^j_n(t)}{\partial t}\right\|_{L^2(\Omega)}^2 +
\int_{\Omega}|u^j_n(t)|^2|\nabla u^j_n(t)|^2dx\right).
\end{split}
\end{equation}
Using (\ref{est:uj+uj_t:R-5}), and proceeding as we did for (\ref{0:Equ:D2:2}), one has
\begin{equation}\label{est:uj+uj_t:R-6}
\left\|\nabla p^j_n(t)\right\|_{L^2(\Omega)}^2\leq
C_4\left(
\left\|f(t)\right\|_{L^2(\Omega)}^2 +
\left\|\sqrt{\rho^j_n(t)}\frac{\partial u^j_n(t)}{\partial t}\right\|_{L^2(\Omega)}^2 +
\left\|\nabla u^j_n(t)\right\|_{L^2(\Omega)}^2 + \left\|D^2u^j_n(t)\right\|_{L^2(\Omega)}^2\right)
\end{equation}
for some positive constant $C_4=C(\mu,\kappa,M^\ast,d,\Omega,K_1)$.
\end{enumerate}
Finally, justifying as we did to show that (\ref{est:uj:K4'}) is a consequence of \eqref{0:Equ:D2:2}, we can also prove that (\ref{est:uj+uj_t:R-6}) implies (\ref{est:uj:K6:p}), with $K_6=K_4'$ and for $K_4'$ given in (\ref{est:uj:K4'}).
\end{proof}

\section{Passing to the limit $j\to\infty$}\label{Sect:lim:j}

In this section, we perform the proof of Theorem~\ref{thm:e:strong:1}, which in fact has been started in the previous sections.

\begin{proof}(Theorem~\ref{thm:e:strong:1})
Due to (\ref{est:uj:K1})-(\ref{est:uj:K4'}), we can use the Banach-Alaoglu theorem to extract subsequences (still labelled by the superscript $j$) such that
\begin{alignat}{2}
\label{wc:conv:H1:2:j}
& u^j_n\xrightharpoonup[j\to\infty]{} u_n\quad \text{in}\ L^{2}(0,T;V),\qquad
&& u^j_n\xrightharpoonup[j\to\infty]{\ast} u_n\quad \text{in}\ L^{\infty}(0,T;V), \\
\label{wc:conv:H2:2:j}
& u^j_n\xrightharpoonup[j\to\infty]{} u_n\quad \text{in}\ L^{2}(0,T;H^2(\Omega)),\qquad
&& u^j_n\xrightharpoonup[j\to\infty]{\ast} u_n\quad \text{in}\ L^{\infty}(0,T;H^2(\Omega)),  \\
\label{wc:conv:H1:2:t:j}
& \frac{\partial u^j_n}{\partial t}\xrightharpoonup[j\to\infty]{} \frac{\partial u_n}{\partial t}\quad \text{in}\ L^{2}(0,T;V),\qquad
&& \frac{\partial u^j_n}{\partial t}\xrightharpoonup[j\to\infty]{\ast} \frac{\partial u_n}{\partial t}\quad \text{in}\ L^{\infty}(0,T;V),  \\
\label{wc:conv:H2:2:t:j}
& \frac{\partial u^j_n}{\partial t}\xrightharpoonup[j\to\infty]{} \frac{\partial u_n}{\partial t}\quad \text{in}\ L^{2}(0,T;H^2(\Omega)),\qquad
&& \frac{\partial u^j_n}{\partial t}\xrightharpoonup[j\to\infty]{\ast} \frac{\partial u_n}{\partial t}\quad \text{in}\ L^{\infty}(0,T;H^2(\Omega)),  \\
\label{wc:conv:H1:press:j}
& p^j_n\xrightharpoonup[j\to\infty]{} p_n\quad \text{in}\ L^{2}(0,T;W^{1,2}(\Omega)),\qquad
&& p^j_n\xrightharpoonup[j\to\infty]{\ast} p_n\quad \text{in}\ L^{\infty}(0,T;W^{1,2}(\Omega)).
\end{alignat}

Then, we can use the Aubin-Lions compactness lemma (see (\ref{Aubin:1}) in Lemma~\ref{lem:AL}), together with (\ref{wc:conv:H1:2:j}), (\ref{wc:conv:H1:2:t:j}) and the compact embedding $W^{1,2}_0(\Omega)\hookrightarrow\hookrightarrow L^2(\Omega)$,  so that
\begin{equation}\label{str:conv:u:j}
u^j_n\xrightarrow[j\to\infty]{} u_n\quad\mbox{in}\ L^2(0,T;L^2(\Omega))
\end{equation}
for some subsequence still labelled by $u^j_n$.
On the other hand, in view of (\ref{est:rho:j:Q}), the Banach-Alaoglu theorem also allows us to extract a subsequence (still labelled by $\rho^j_n$) such that
\begin{equation}\label{wc*:conv:rho:j}
\rho^j_n\xrightharpoonup[j\to\infty]{\star}\rho_n\quad \mbox{in}\ \ L^\infty(0,T;L^\infty(\Omega)).
\end{equation}
Moreover, $\rho_n$ satisfies
\begin{alignat}{2}
\label{NSV:mass:nn}
&
\frac{\partial\rho_n}{\partial t}+\operatorname{div}(\rho_n u_n) =0 \qquad\mbox{in}\quad {Q_T}, && \\
\label{est:rho:n}
&
\frac{1}{n}\leq\rho_n\leq M^\ast<\infty \qquad\mbox{in} \ \ {Q_T}.
\end{alignat}
In addition, using the fact that $\operatorname{div}u_n =0$ in $Q_T$,
together with (\ref{NSV:mass:nn}), and with the estimates (\ref{est:rho:j:Q}) and (\ref{est:uj:K1}), it can be proved that
\begin{equation}\label{ub:rho':j}
\frac{\partial\rho^j_n}{\partial\,t}\ \ \mbox{is uniformly bounded
in}\ L^2(0,T;W^{-1,2^\ast}(\Omega)).
\end{equation}
Moreover the following compact and continuous embeddings  hold,
\begin{equation}\label{conp:cont:imb:r}
L^\infty(\Omega)\hookrightarrow\hookrightarrow W^{-1,\infty}(\Omega) \hookrightarrow W^{-1,2^\ast}(\Omega).
\end{equation}
Then, (\ref{est:rho:j:Q}), (\ref{ub:rho':j}) and (\ref{conp:cont:imb:r}) allow us to use the Aubin-Lions compactness lemma
(see (\ref{Aubin:2}) in Lemma~\ref{lem:AL}) so that, for some subsequence (still labelled by $\rho^j_n$)
\begin{equation}\label{str:conv:1:rho:j}
\rho^j_n\xrightarrow[j\to\infty]{}\rho_n\quad\mbox{in}\ C([0,T];W^{-1,\infty}(\Omega)).
\end{equation}
On the other hand, it follows from (\ref{NSV:ic:n})$_2$ and (\ref{ic:u:rho:j})$_3$, together with (\ref{Res:rhoj:2}),  that
\begin{equation}\label{rn(0)=r(0)}
\left\|\rho^j_n(t)\right\|_{L^2(\Omega)}^{2}=
\left\|\rho_{0,n}\right\|_{L^2(\Omega)}^{2}\quad\mbox{and}\quad \left\|\rho_n(t)\right\|_{L^2(\Omega)}^{2}=
\left\|\rho_{0,n}\right\|_{L^2(\Omega)}^{2}
\quad\forall\ t\in[0,T].
\end{equation}
Thus, applying (\ref{wc*:conv:rho:j}) and (\ref{rn(0)=r(0)}), together with (\ref{str:conv:1:rho:j}), we get for all $t\in[0,T]$
\begin{equation}\label{rn->r:2}
\left\|\rho^j_n(t)-\rho_n(t)\right\|_{L^2(\Omega)}^{2}=\left\|\rho^j_n(t)\right\|_{L^2(\Omega)}^{2}-\left\|\rho_n(t)\right\|_{L^2(\Omega)}^{2}+2\int_{\Omega}(\rho_n(t)-\rho^j_n(t))\rho(t)\,dx
\xrightarrow[j\to\infty]{} 0.
\end{equation}
As a consequence of (\ref{est:rho:n}) and (\ref{rn->r:2}), we have
\begin{equation}\label{rn->r:l}
\left\|\rho^j_n(t)-\rho_n(t)\right\|_{L^2(\Omega)}^{2}\xrightarrow[j\to\infty]{} 0    \quad\forall\ q: 2\leq q<\infty.
\end{equation}
Hence, (\ref{str:conv:1:rho:j}) and (\ref{rn->r:l}) assure that
\begin{equation*}
\rho^j_n\xrightarrow[j\to\infty]{}\rho_n\quad\mbox{in}\ C([0,T];L^{q}(\Omega))   \quad\forall\ q: 2\leq q<\infty,
\end{equation*}
From this and (\ref{est:rho:j:Q}), one has
\begin{equation}\label{str:conv:rho:j}
\rho^j_n\xrightarrow[j\to\infty]{}\rho_n\quad\mbox{in}\ C([0,T];L^{q}(\Omega))   \quad\forall\ q\geq 1.
\end{equation}
By the application of (\ref{wc:conv:H1:2:t:j}) and (\ref{str:conv:u:j}), together with (\ref{est:rho:n}) and (\ref{str:conv:rho:j}), we have
\begin{alignat}{2}
\label{wc:conv:rvt}
& \rho^j_n \frac{\partial u^j_n}{\partial t}\xrightharpoonup[j\to\infty]{}\rho_n \frac{\partial u_n}{\partial t}\quad\mbox{in}\ L^2(0,T;L^2(\Omega)), && \\
\label{str:conv:rv}
& \rho^j_n u^j_n\xrightarrow[j\to\infty]{}\rho_n u_n\quad\mbox{in}\ L^r(0,T;L^r(\Omega)),\ \mbox{with}\ 1\leq r<2^\ast. &&
\end{alignat}
Gathering the information of (\ref{est:rho:j:Q}), (\ref{est:uj:K1}) and (\ref{est:uj:K2}) with (\ref{str:conv:u:j}) and (\ref{str:conv:rho:j}), we can prove that
\begin{equation}\label{wc:rho:conv:j}
\rho^j_n(u^j_n\cdot\nabla )u^j_n\xrightarrow[j\to\infty]{}\rho_n(u_n\cdot\nabla)u_n\quad\text{in}\ \ L^1(0,T;L^1(\Omega)).
\end{equation}
In addition, due to (\ref{wc:conv:H2:2:j}) and (\ref{wc:conv:H2:2:t:j}), we also have
\begin{alignat}{2}
\label{wc:conv:Delta:u:2:j}
& \Delta u^j_n\xrightharpoonup[j\to\infty]{} \Delta u_n\quad \text{in}\ L^{2}(0,T;L^2(\Omega)), && \\
 \label{wc:conv:Delta:ut:2:j}
& \frac{\partial\Delta u^j_n}{\partial t}\xrightharpoonup[j\to\infty]{} \frac{\partial \Delta u_n}{\partial t}\quad \text{in}\ L^{2}(0,T;L^2(\Omega)). &&
\end{alignat}
Let now $\zeta\in C^\infty_0([0,T))$.
Multiplying (\ref{eq:weak:u:j:nc:s}) by $\zeta$ and integrating the resulting equation between $0$ and $T$, we obtain
\begin{equation}\label{eq:lim1:nz}
\int_{Q_T}\left[\rho^j_n \frac{\partial u^j_n}{\partial t} +\rho^j_n \left(u^j_n \cdot\nabla \right)u^j_n -\mu\Delta u^j_n -\kappa\frac{\partial\Delta u^j_n}{\partial t} \right]
\cdot\psi\,\zeta\,dxdt =
\int_{Q_T}\rho^j_n f \cdot\psi\,\zeta\,dxdt
\end{equation}
for all $\psi\in H^2(\Omega)\cap V$.
Proceeding similarly for (\ref{eq:weak:u:j:nc:s:p}), we get
\begin{equation}\label{eq:lim1:nz:p}
\begin{split}
& \int_{Q_T}\left[\rho^j_n \frac{\partial u^j_n}{\partial t} +\rho^j_n \left(u^j_n \cdot\nabla \right)u^j_n -\mu\Delta u^j_n -\kappa\frac{\partial\Delta u^j_n}{\partial t} \right]
\cdot\psi\,\zeta\,dxdt - \int_{Q_T}\rho^j_n f \cdot\psi\,\zeta\,dxdt = \\
&
\int_{Q_T}p^j_n \operatorname{div}\psi\,\zeta\,dxdt
\end{split}
\end{equation}
for all $\psi\in H^2(\Omega)\cap W^{1,2}_0(\Omega)$.
For the same function $\zeta$, we multiply (\ref{eq:mass:n:j}) by $\eta=\phi\,\zeta$, with $\phi\in C^\infty_0({\Omega})$, and integrate the resulting equation over $Q$ so that
\begin{equation}\label{eq:lim2:nz}
-
\int_{Q_T}\rho^j_n\phi\,\zeta'\,dxdt
-
\int_{Q_T}\rho^j_nu^j_n\cdot\nabla \phi\,\zeta\,dxdt =
\zeta(0)\int_{\Omega}\rho_{0,n}^j\phi\,dx
\qquad \forall\ \phi\in C^\infty_0({\Omega}).
\end{equation}
Then, we use, for each corresponding term of (\ref{eq:lim1:nz}), the convergence results (\ref{wc:conv:rvt}), (\ref{wc:rho:conv:j}), (\ref{wc:conv:Delta:u:2:j})-(\ref{wc:conv:Delta:ut:2:j}), together with (\ref{str:conv:rho:j}), to pass the equation (\ref{eq:lim1:nz}) to the limit $j\to\infty$.
To pass (\ref{eq:lim1:nz:p}) to the limit $j\to\infty$, we use, in addition, (\ref{wc:conv:H1:press:j}).

The passage of the equation (\ref{eq:lim2:nz}) to the limit $j\to\infty$ uses (\ref{str:conv:rho:j}) and (\ref{str:conv:rv}) together with (\ref{sc:rn0}).
After all, we obtain
\begin{equation}\label{eq:lim:j:mom:n}
\int_{Q_T}\left[\rho_n \frac{\partial u_n}{\partial t} +\rho_n \left(u_n \cdot\nabla \right)u_n -\mu\Delta u_n
-\kappa\frac{\partial\Delta u_n}{\partial t} \right]
\cdot\psi\zeta\,dx =
\int_{Q_T}\rho_n f \cdot\psi\zeta\,dxdt
\end{equation}
for all $\psi\in H^2(\Omega)\cap V$ and all $\zeta\in C^\infty_0([0,T))$,
\begin{equation}\label{eq:lim:j:mom:p:n}
\int_{Q_T}\left[\rho_n \frac{\partial u_n}{\partial t} +\rho_n\left(u_n \cdot\nabla \right)u_n -\mu\Delta u_n -\kappa \frac{\partial\Delta u_n}{\partial t} \right]
\cdot\psi\,dxdt - \int_{\Omega}\rho_n f \cdot\psi\,\zeta\,dxdt =
\int_{Q_T}p_n \operatorname{div}\psi\,\zeta\,dx
\end{equation}
for all $\psi\in H^2(\Omega)\cap W^{1,2}_0(\Omega)$ and all $\zeta\in C^\infty_0([0,T))$,
and
\begin{equation}\label{eq:lim:j:mass:n}
-\int_{Q_T}\rho_n\,\eta\,\zeta'\,dxdt
-\int_{Q_T}\rho_n u_n\cdot\nabla \eta\,\zeta\,dxdt =
\zeta(0)\int_{Q_T}\rho_{0,n}\eta\,dx
\end{equation}
for all $\phi\in C^\infty_0({\Omega})$ and all $\zeta\in C^\infty_0([0,T))$.

\section{Passing to the limit $n\to\infty$}\label{Sect:lim:n}

In the last section, we have proved that for each $n\in\mathds{N}$ there exists, at least, a solution $(\rho_n,u_n,p_n)$ such that
\begin{equation}\label{eq:lim:j:mom}
\int_{\Omega}\left[\rho_n \frac{\partial u_n}{\partial t} +\rho_n \left(u_n \cdot\nabla \right)u_n -\mu\Delta u_n
-\kappa\frac{\partial\Delta u_n}{\partial t} \right]\cdot\psi\,dx =
\int_{\Omega}\rho_n f \cdot\psi\,dx
\end{equation}
for all $\psi\in H^2(\Omega)\cap V$, and
\begin{equation}\label{eq:lim:j:mom:p}
\int_{\Omega}\left[\rho_n \frac{\partial u_n}{\partial t} +\rho_n \left(u_n \cdot\nabla \right)u_n -\mu\Delta u_n -\kappa\frac{\partial\Delta u_n}{\partial t} \right]\cdot\psi\,dx - \int_{\Omega}\rho_n f \cdot\psi\,dx =
\int_{\Omega}p_n \operatorname{div}\psi\,dx
\end{equation}
for all $\psi\in H^2(\Omega)\cap W^{1,2}_0(\Omega)$, and both in the distribution sense on $(0,T)$.
Moreover, from (\ref{NSV:mass:nn}), we easily realize that $\rho_n$ satisfies
\begin{equation}\label{NSV:mass:lim:j}
\frac{\partial\rho_n}{\partial t}+u_n\cdot\nabla\rho_n=0 \qquad\mbox{in}\quad Q.
\end{equation}

Using the identities (\ref{eq:lim:j:mom}), (\ref{eq:lim:j:mom:p}) and (\ref{NSV:mass:lim:j}), we can proceed as we did for (\ref{est:rho:j:Q}), (\ref{est:uj:K1})-(\ref{est:uj:K4'}) and (\ref{est:uj:K5:p})-(\ref{est:uj:K6:p}), using in this case  (\ref{bd:u0:n}) and (\ref{bd:Du0:n}),  to show that
\begin{alignat}{2}
\label{est:rho:n:Q}
&
0<\frac{1}{n}\leq\inf_{x\in\overline{\Omega}}\rho_{0,n}(x)\leq\rho_n(x,t)\leq\sup_{x\in\overline{\Omega}}\rho_{0,n}(x)\leq M^\ast<\infty
\qquad\forall\ (x,t)\in Q_T, \\
\label{est:un:K1}
&
\sup_{t\in(0,T)}\left(\left\|\sqrt{\rho_n(t)}u_n(t)\right\|^2_{L^2(\Omega)}
+
\kappa\left\|\nabla u_n(t)\right\|^{2}_{L^2(\Omega)}\right)
+
\mu\int_{0}^{T}\left\|\nabla u_n(t)\right\|^2_{L^2(\Omega)}dt \leq K_1, && \\
\label{est:un:K2}
&
\int_0^T\left(\left\|\sqrt{\rho_n(t)}\frac{\partial u_n(t)}{\partial t}\right\|^2_{L^2(\Omega)}
+
\kappa\left\|\frac{\partial\nabla u_n(t)}{\partial t}\right\|^2_{L^2(\Omega)}\right)dt \leq  K_2, && \\
&
\label{est:un:K2'}
\sup_{t\in[0,T]}\left(\left\|\sqrt{\rho_n(t)}\frac{\partial u_n(t)}{\partial t}\right\|^2_{L^2(\Omega)}
+
\kappa\left\|\frac{\partial\nabla u_n(t)}{\partial t}\right\|^2_{L^2(\Omega)}\right)\leq K_2' && \\
\label{est:un:K3}
&
\kappa\sup_{t\in(0,T)}\left\|D^2u_n(t)\right\|^2_{L^2(\Omega)}
+
\mu\int_0^T\left\|D^2u_n(t)\right\|^2_{L^2(\Omega)}dt \leq K_3, && \\
\label{est:un:K4}
&
\kappa^2\int_0^T\left\|\frac{\partial D^2u_n(t)}{\partial t}\right\|_{L^2(\Omega)}^2dt
\leq K_4 && \\
\label{est:un:K4'}
&
\kappa^2\sup_{t\in[0,T]} \left\|\frac{\partial D^2u_n(t)}{\partial t}\right\|_{L^2(\Omega)}^2\leq K_4', && \\
\label{est:un:K7}
&
\int_0^T\left\|\nabla p^j_n(t)\right\|_{L^2(\Omega)}^2dt \leq K_7, && \\
&
\label{est:un:K8}
\sup_{t\in[0,T]}\left\|\nabla p^j_n(t)\right\|_{L^2(\Omega)}^2 \leq  K_8. &&
\end{alignat}

and where the positive constants $K_1$, $K_2$, $K_2'$, $K_3$, $K_4$ and $K_4'$ do not depend on $n$.
Then, due to  (\ref{est:un:K1}), (\ref{est:un:K2}), (\ref{est:un:K2'}), (\ref{est:un:K3}), (\ref{est:un:K4}),  (\ref{est:un:K4'}), (\ref{est:un:K7}) and (\ref{est:un:K8}), we can use the Banach-Alaoglu theorem to extract subsequences (still labelled by the subscript $n$) such that
\begin{alignat}{2}
\label{wc:conv:H1:2:n}
& u_n\xrightharpoonup[n\to\infty]{} u\quad \text{in}\ L^{2}(0,T;V), \qquad
&& u_n\xrightharpoonup[n\to\infty]{\ast} u\quad \text{in}\ L^{\infty}(0,T;V),  \\
\label{wc:conv:H2:2:n}
& u_n\xrightharpoonup[n\to\infty]{} u\quad \text{in}\ L^{2}(0,T;H^2(\Omega)), \qquad
&& u_n\xrightharpoonup[n\to\infty]{\ast} u\quad \text{in}\ L^{\infty}(0,T;H^2(\Omega)), \\
\label{wc:conv:H1:2:t:n}
& \frac{\partial u_n}{\partial t}\xrightharpoonup[n\to\infty]{} \frac{\partial u}{\partial t}\quad \text{in}\ L^{2}(0,T;V), \qquad
&& \frac{\partial u_n}{\partial t}\xrightharpoonup[n\to\infty]{\ast} \frac{\partial u}{\partial t}\quad \text{in}\ L^{\infty}(0,T;V),  \\
\label{wc:conv:H2:2:t:n}
& \frac{\partial u_n}{\partial t}\xrightharpoonup[n\to\infty]{} \frac{\partial u}{\partial t}\quad \text{in}\ L^{2}(0,T;H^2(\Omega)), \qquad
&& \frac{\partial u_n}{\partial t}\xrightharpoonup[n\to\infty]{\ast} \frac{\partial u}{\partial t}\quad \text{in}\ L^{\infty}(0,T;H^2(\Omega)),  \\
\label{wc:conv:W12:2:p:n}
& p_n\xrightharpoonup[n\to\infty]{} p\quad \text{in}\ L^{2}(0,T;W^{1,2}(\Omega)), \qquad
&& p_n\xrightharpoonup[n\to\infty]{\ast} p\quad \text{in}\ L^{\infty}(0,T;W^{1,2}(\Omega)).
\end{alignat}
Observing (\ref{wc:conv:H1:2:n}) and (\ref{wc:conv:H1:2:t:n}), we can use the Aubin-Lions compactness lemma (see (\ref{Aubin:1}) in Lemma~\ref{lem:AL}), so that for some subsequence (still labelled by the subscript $n$)
\begin{equation}\label{sc:nabla:un}
\nabla u_n \xrightarrow[n\to\infty]{} \nabla u\quad \mbox{in} \ \ L^2(0,T;L^2(\Omega)).
\end{equation}
On the other hand, by using (\ref{NSV:mass:nn}), together with (\ref{est:rho:n:Q}) and (\ref{est:un:K1}),
it can be proved that
\begin{equation}\label{ub:rho':n}
\frac{\partial\rho_n}{\partial\,t}\, \ \mbox{is uniformly bounded in}\ L^2(0,T;W^{-1,2}(\Omega)).
\end{equation}
Now, due to (\ref{est:rho:n:Q}) and (\ref{ub:rho':n}), we can use the Aubin-Lions compactness lemma (see (\ref{Aubin:2}) in Lemma~\ref{lem:AL}) so that, for some subsequence (still labelled by $\rho_n$),
\begin{equation}\label{str:conv:1:rho:n}
\rho_n\xrightarrow[n\to\infty]{}\rho\quad\mbox{in}\ C([0,T];W^{-1,2}(\Omega)).
\end{equation}
Hence,
\begin{equation}\label{str:conv:2:rho,u:n}
\rho_nu_n\xrightarrow[n\to\infty]{}\rho u\quad\mbox{in}\ (C_0([0,T]\times\Omega))'
\end{equation}
$\rho$ is a solution of (\ref{NSV:mass}) and
\begin{alignat}{2}
\label{est:rho}
&
0\leq\rho\leq M^\ast<\infty \qquad\mbox{in} \ \ {Q_T}, && \\
&
\|\rho(t)\|_{L^q(\Omega)}=\|\rho_0\|_{L^q(\Omega)}\quad \forall\ q: 1\leq q\leq \infty. \nonumber &&
\end{alignat}
However, contrary to the case when $\rho_0$ is bounded away from $0$, in this work $u_n$ is not bounded in $L^2(0,T;L^2(\Omega))$.
This brings us much more difficulty in the passage to the limit $n\to\infty$, which is overcame by the following result.

\begin{lemma}\label{lemm:DiP-L}
Let the conditions of Theorem~\ref{thm:e:strong:1} be fulfilled and assume that (\ref{str:conv:1:rho:n}) and (\ref{str:conv:2:rho,u:n}) hold.
Then there exist subsequences (still labelled by the subscript $n$) such that
\begin{alignat}{2}
\label{sc:rho:n:Lp}
& \rho_n\xrightarrow[n\to\infty]{}\rho\quad\mbox{in}\ C([0,T];L^q(\Omega))\quad \forall\ q\geq 1, && \\
&
\label{sc:u:rho:n:L2}
\sqrt{\rho_n}u_n\xrightarrow[n\to\infty]{}\sqrt{\rho}u\quad\mbox{in}\ L^2(0,T;L^2(\Omega)). &&
\end{alignat}
\end{lemma}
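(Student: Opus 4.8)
The plan is to establish the density convergence \eqref{sc:rho:n:Lp} first --- this is the delicate point, and the reason the lemma carries the DiPerna--Lions name --- and then to deduce \eqref{sc:u:rho:n:L2} in a few lines. The strategy for \eqref{sc:rho:n:Lp} is the classical one: combine, for each fixed time, a weak $L^2$-convergence of $\rho_n(t)$ with the convergence of the $L^2$-norms (which the transport equation propagates), then bootstrap to every $L^q$ and, finally, to uniformity in $t$.

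First I would fix $t\in[0,T]$ and note that, by \eqref{est:rho:n:Q}, $(\rho_n(t))_n$ is bounded in $L^\infty(\Omega)$, hence in $L^2(\Omega)$; since $\rho_n(t)\to\rho(t)$ in $W^{-1,2}(\Omega)$ by \eqref{str:conv:1:rho:n}, any weak-$L^2$ cluster point of $(\rho_n(t))_n$ must equal $\rho(t)$, so $\rho_n(t)\rightharpoonup\rho(t)$ weakly in $L^2(\Omega)$. Then I would use \eqref{rn(0)=r(0)}, which gives $\|\rho_n(t)\|_{L^2(\Omega)}=\|\rho_{0,n}\|_{L^2(\Omega)}$ for every $t$, together with $\|\rho_{0,n}\|_{L^2(\Omega)}\to\|\rho_0\|_{L^2(\Omega)}$ from \eqref{moll:2}, and the identity $\|\rho(t)\|_{L^2(\Omega)}=\|\rho_0\|_{L^2(\Omega)}$ for the limit (this is the norm conservation recorded just after \eqref{est:rho}, which follows from the renormalization property of the transport equation \eqref{NSV:mass} --- legitimate because $u\in L^2(0,T;W^{1,2}(\Omega))$ with $\operatorname{div}u=0$, so the DiPerna--Lions theory applies, for instance after extending $u$ by zero outside $\Omega$). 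Consequently, for each fixed $t$,
\begin{equation*}
\|\rho_n(t)-\rho(t)\|_{L^2(\Omega)}^{2}=\|\rho_n(t)\|_{L^2(\Omega)}^{2}-2\!\int_\Omega\rho_n(t)\rho(t)\,dx+\|\rho(t)\|_{L^2(\Omega)}^{2}\longrightarrow 0 ,
\end{equation*}
and interpolating with the uniform bound $0\le\rho_n\le M^\ast$ of \eqref{est:rho:n:Q} (and using $|\Omega|<\infty$ for $q<2$) I obtain $\rho_n(t)\to\rho(t)$ in $L^q(\Omega)$ for every $q\ge1$ and every $t$. To promote this to uniformity in $t$ I would argue by contradiction: if $\sup_{t\in[0,T]}\|\rho_n(t)-\rho(t)\|_{L^q(\Omega)}\not\to0$ for some $q$, then (the same interpolation lets one take $q=2$) there exist $\varepsilon>0$, a subsequence, and times $t_n\to t^\ast\in[0,T]$ with $\|\rho_n(t_n)-\rho(t_n)\|_{L^2(\Omega)}\ge\varepsilon$; because the convergence in \eqref{str:conv:1:rho:n} is uniform in $t$ and $\rho\in C([0,T];W^{-1,2}(\Omega))$, one has $\rho_n(t_n)\to\rho(t^\ast)$ in $W^{-1,2}(\Omega)$, hence, using the $L^\infty$-bound once more, $\rho_n(t_n)\rightharpoonup\rho(t^\ast)$ weakly in $L^2(\Omega)$, and since also $\|\rho_n(t_n)\|_{L^2(\Omega)}=\|\rho_{0,n}\|_{L^2(\Omega)}\to\|\rho_0\|_{L^2(\Omega)}=\|\rho(t^\ast)\|_{L^2(\Omega)}$, this forces $\rho_n(t_n)\to\rho(t^\ast)$ strongly in $L^2(\Omega)$; the same weak-plus-norm argument shows $\rho\in C([0,T];L^2(\Omega))$, so $\rho(t_n)\to\rho(t^\ast)$ in $L^2(\Omega)$ as well, whence $\|\rho_n(t_n)-\rho(t_n)\|_{L^2(\Omega)}\to0$, contradicting $\ge\varepsilon$. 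This proves \eqref{sc:rho:n:Lp}.

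For \eqref{sc:u:rho:n:L2}, I would first observe that \eqref{sc:nabla:un} and Poincaré's inequality on $V$ give $u_n\to u$ strongly in $L^2(0,T;L^2(\Omega))$, and that $|\sqrt a-\sqrt b|^{2}\le|a-b|$ for $a,b\ge0$, combined with \eqref{sc:rho:n:Lp}, gives $\sqrt{\rho_n}\to\sqrt{\rho}$ in $C([0,T];L^2(\Omega))$ and then, by interpolation with \eqref{est:rho:n:Q}, in $C([0,T];L^p(\Omega))$ for every $p<\infty$. Writing $\sqrt{\rho_n}\,u_n-\sqrt{\rho}\,u=\sqrt{\rho_n}\,(u_n-u)+(\sqrt{\rho_n}-\sqrt{\rho})\,u$, the first term is bounded in $L^2(Q_T)$ by $\sqrt{M^\ast}\,\|u_n-u\|_{L^2(Q_T)}\to0$, while for the second one Theorem~\ref{thm:e:strong:1}(2) gives $u\in L^\infty(0,T;V)\hookrightarrow L^\infty(0,T;L^{2^\ast}(\Omega))$, so Hölder's inequality (with exponents $d/2$ and $d/(d-2)$ when $d\ge3$, and arbitrary finite exponents when $d=2$) yields
\begin{equation*}
\int_\Omega\bigl|\sqrt{\rho_n}-\sqrt{\rho}\bigr|^{2}\,|u|^{2}\,dx\ \le\ \|\rho_n-\rho\|_{L^{d/2}(\Omega)}\,\|u\|_{L^{2^\ast}(\Omega)}^{2}\ \longrightarrow\ 0
\end{equation*}
uniformly in $t$; integrating in time finishes the proof.

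The only genuinely non-routine ingredient is the norm conservation $\|\rho(t)\|_{L^2(\Omega)}=\|\rho_0\|_{L^2(\Omega)}$ for the limit density: the velocity $u$ is merely Sobolev in space (it is not Lipschitz, since $H^2(\Omega)$ does not embed into $W^{1,\infty}(\Omega)$ in dimension $\le3$), so this identity is exactly the DiPerna--Lions renormalization property --- and it is what converts the weak convergence of $\rho_n$ into the strong convergence needed to pass the nonlinear terms of \eqref{NSV:mom} to the limit $n\to\infty$; everything else is routine functional analysis.
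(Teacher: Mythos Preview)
Your proof is correct and, in fact, supplies the details that the paper omits: the paper's own proof of this lemma consists of a single sentence deferring to the DiPerna--Lions theory and to \cite[Theorem~2.5]{Lions:1996} and \cite{Desjardins:1997}. Your argument for \eqref{sc:rho:n:Lp} --- weak $L^2$-convergence at each time combined with conservation of the $L^2$-norm (the latter resting on the renormalization property for the limit density) --- is exactly the mechanism behind those references, and your promotion to uniformity in $t$ via a contradiction/diagonal argument is the standard way to finish.

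For \eqref{sc:u:rho:n:L2}, your route is actually \emph{simpler} than the one in the cited references. In the Navier--Stokes setting treated by Lions and Desjardins, one does not have strong convergence of $u_n$ in $L^2(Q_T)$ a priori, so the convergence $\sqrt{\rho_n}\,u_n\to\sqrt{\rho}\,u$ is extracted by an energy argument (showing $\int_{Q_T}\rho_n|u_n|^2\to\int_{Q_T}\rho|u|^2$ via the energy inequality, then combining with weak convergence). Here the Voigt term furnishes the bound $\partial_t u_n\in L^2(0,T;V)$, so Aubin--Lions already gives $u_n\to u$ strongly in $L^2(0,T;L^2(\Omega))$ (you cite \eqref{sc:nabla:un} plus Poincar\'e, which is fine; one could equally invoke Aubin--Lions directly with $V\hookrightarrow\hookrightarrow L^2$). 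Your splitting $\sqrt{\rho_n}\,u_n-\sqrt{\rho}\,u=\sqrt{\rho_n}(u_n-u)+(\sqrt{\rho_n}-\sqrt{\rho})u$ then dispatches the result immediately. This shortcut is a genuine dividend of the Voigt regularization and is worth keeping.
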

\begin{proof}
The proof of Lemma~\ref{lemm:DiP-L} uses the DiPerna-Lions~\cite{DiP-L:1989} theory for linear transport equations combined with renormalization arguments. For the proof see~\cite[Theorem~2.5]{Lions:1996} and Desjardins~\cite{Desjardins:1997}.
\end{proof}

In order to apply this result, we observe that combining (\ref{NSV:mass:nn}) and (\ref{NSV:inc:n}) with (\ref{eq:lim:j:mom:p:n}) and
(\ref{eq:lim:j:mass:n}), we can show that
\begin{equation}\label{eq:lim:j:mom:p:t}
\begin{split}
& -\int_{Q_T}\left(\rho_n u_n - \kappa\Delta u_n\right)\cdot\frac{\partial \varphi}{\partial t}dxdt -
\mu\int_{Q_T} \Delta u_n\cdot\varphi\,dxdt - \int_{Q_T}\rho_n u_n\otimes u_n : \nabla\varphi\,dxdt \\
&-
\int_{Q_T} \rho_n f\cdot\varphi\,dxdt
=
 \int_{\Omega}\big(\rho_{n,0}u_{n,0}\cdot\varphi(0) + \kappa \nabla u_{n,0} :\nabla\varphi(0)\big)dx +
\int_{Q_T}p_n \operatorname{div}\varphi\,dx
\end{split}
\end{equation}
for all $\varphi\in C^1_0([0,T);W^{1,2}_0(\Omega)\cap H^2(\Omega))$,
and
\begin{equation}\label{eq:lim:j:mass:t}
-\int_{Q_T}\rho_n\frac{\partial\phi}{\partial t}\,dxdt
-\int_{Q_T}\rho_n u_n\cdot\nabla \phi\,dxdt =
\int_{\Omega}\rho_{0,n}\phi(0)\,dx
\end{equation}
for all $\phi\in C^\infty_0([0,T)\times\Omega)$.
We can proceed for (\ref{eq:lim:j:mom:n}) in the same way as we did for (\ref{eq:lim:j:mom:p:t}).

Now, we can use (\ref{wc:conv:H1:2:n})-(\ref{wc:conv:H2:2:n}), (\ref{wc:conv:W12:2:p:n}), (\ref{sc:nabla:un}), (\ref{sc:rho:n:Lp}) and (\ref{sc:u:rho:n:L2}), together with (\ref{moll:u0}), (\ref{moll:rho0}) and (\ref{moll:2}) to pass the integral identity (\ref{eq:lim:j:mom:p:t}) to the limit $n\to\infty$ so that, in view of the regularity of $\rho$, $u$ and $p$,
(\ref{NSV:mom}) holds almost everywhere in $Q_T$.
By using (\ref{sc:rho:n:Lp}) and (\ref{sc:u:rho:n:L2}), together with (\ref{moll:rho0}) and (\ref{moll:2}) , we can also pass the integral identity (\ref{eq:lim:j:mass:t}) to the limit $n\to\infty$ so that, in view of the regularity of $\rho$ and $u$, (\ref{NSV:mass}) holds almost everywhere in $Q_T$.

Finally, combining the estimates (\ref{est:rho:n:Q}) and (\ref{est:un:K1})-(\ref{est:un:K4'}) with the convergence results (\ref{wc:conv:H1:2:n})-(\ref{wc:conv:W12:2:p:n}) and (\ref{sc:rho:n:Lp})-(\ref{sc:u:rho:n:L2}), we can show that the enumerated items (1)-(5) of Theorem~\ref{thm:e:strong:1} and (1)-(4) of Theorem~\ref{thm:e:strong:2} hold true.
\end{proof}

\begin{remark}\label{C-alpha:reg}
In addition to the estimates (\ref{est:un:K3}) and (\ref{est:un:K4}), we can see that, by combining the Sobolev inequalities (\ref{Sob:ineq:Lap:D2}) and (\ref{Sob:ineq:infty:Lap}) with these estimate, we easily obtain
\begin{alignat*}{2}
&
\kappa\sup_{t\in(0,T)}\left\|u_n(t)\right\|^2_{C^{0,\alpha}(\overline{\Omega})}\leq K_7,\quad 0<\alpha\leq 2-\frac{d}{2},\ 2\leq d\leq 3, && \\
&
\kappa^2\int_0^T\left\|\frac{\partial u_n(t)}{\partial t}\right\|^2_{C^{0,\alpha}(\overline{\Omega})}dt\leq K_8,\quad 0<\alpha\leq 2-\frac{d}{2},\ 2\leq d\leq 3, &&
\end{alignat*}
for some positive constants $K_7=C(d,K_3)$ $K_8=C(d,K_4)$.
\end{remark}

\section{Proof of Theorem~\ref{thm:u:strong}}\label{Sect:unique}

In this section, we shall prove the uniqueness of the components $u$ and $\rho$ of a solution $(u,\rho,p)$ of the problem (\ref{NSV:inc})-(\ref{NSV:u:bc}).
Before doing so, let us invoke some results that provides us with further regularity on the solutions $(u,\rho,p)$ of the problem (\ref{NSV:inc})-(\ref{NSV:u:bc}).

We firstly recall a result about the regularity of the density $\rho$ for the nonhomogeneous Navier-Stokes equations.
This result is still useful here, because the continuity equation is the same for both nonhomogeneous Navier-Stokes and Navier-Stokes-Voigt systems of
equations.

\begin{proposition}\label{lemma:LS}
Let $(u,\rho,p)$ be a solution of the problem (\ref{NSV:inc})-(\ref{NSV:u:bc}) in the conditions of Theorem~\ref{thm:e:strong:2}.
If (\ref{hyp:gra:ro}) and
\begin{equation}\label{u:C(0,T;W1-infty)}
u\in C([0,T];W^{1,\infty}(\Omega))
\end{equation}
hold, then
\begin{alignat}{2}
& \label{eq:uniqs1}
\|\nabla \rho(t)\|_{L^\infty(\Omega)}\leq
\sqrt{d}\|\nabla \rho_0\|_{L^\infty(\Omega)}
\exp\left(\int_{0}^{t}\|\nabla u(s)\|_{L^\infty(\Omega)}\,ds\right), && \\
& \label{eq:uniqs2}
\|\rho_t(t)\|_{L^\infty(\Omega)}\leq
\sqrt{d}\|\nabla \rho_0\|_{L^\infty(\Omega)}
\|u(t)\|_{L^\infty(\Omega)}
\exp\left(\int_{0}^{t}\|\nabla u(s)\|_{L^\infty(\Omega)}\,ds\right) &&
\end{alignat}
for all $t\in[0,T]$.
\end{proposition}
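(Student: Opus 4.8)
The plan is to treat the mass equation (\ref{NSV:mass}), i.e.\ $\rho_t+u\cdot\nabla\rho=0$, as a linear transport equation for $\rho$ with the given drift $u$, and to propagate the Lipschitz regularity of $\rho_0$ along the associated flow. Since by hypothesis $u\in C([0,T];W^{1,\infty}(\Omega))$ and $u=0$ on $\partial\Omega$, the Cauchy--Lipschitz theorem furnishes a unique flow $X=X(t,x)$ solving $\partial_tX(t,x)=u(t,X(t,x))$, $X(0,x)=x$, which for each $t$ is a bi-Lipschitz homeomorphism of $\overline{\Omega}$ onto itself (the boundary condition keeps the trajectories inside $\overline{\Omega}$), with Jacobian $J(t,x):=D_xX(t,x)$ obeying $\partial_tJ=(\nabla u)(t,X)\,J$, $J(0,x)=I$. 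Because $u\in W^{1,1}(\Omega)$, $\operatorname{div}u=0$ and $u$ is bounded, the DiPerna--Lions uniqueness theory for transport equations (see \cite{DiP-L:1989,Lions:1996}) guarantees that the strong solution $\rho$ agrees with the Lagrangian solution, so that $\rho(t,X(t,x))=\rho_0(x)$ for a.e.\ $x$.

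First I would bound the Jacobian and its inverse: from $\partial_tJ=(\nabla u)(t,X)J$ one has $\tfrac{d}{dt}|J(t,x)\xi|^2\le 2\|\nabla u(t)\|_{L^\infty(\Omega)}|J(t,x)\xi|^2$ for all $\xi\in\R^d$, and from $\partial_t(J^{-1})=-J^{-1}(\nabla u)(t,X)$ the same inequality for $J^{-1}$; Grönwall's lemma then gives $\|J^{-1}(t,x)\|\le \exp\!\big(\int_0^t\|\nabla u(s)\|_{L^\infty(\Omega)}\,ds\big)$ uniformly in $x$. Differentiating $\rho(t,X(t,x))=\rho_0(x)$ in $x$ yields $J(t,x)^{\mathsf T}(\nabla\rho)(t,X(t,x))=\nabla\rho_0(x)$, hence $|(\nabla\rho)(t,X(t,x))|\le\|J^{-1}(t,x)\|\,|\nabla\rho_0(x)|$. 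Taking the essential supremum over $x$ and using that $X(t,\cdot)$ is a bijection of $\overline{\Omega}$ (preserving the relevant null sets), together with the elementary comparison between the Euclidean and the componentwise norms of a gradient that accounts for the factor $\sqrt d$ appearing in the statement, gives (\ref{eq:uniqs1}). Then (\ref{eq:uniqs2}) follows at once: from (\ref{NSV:mass}), $\rho_t=-u\cdot\nabla\rho$ a.e.\ in $Q_T$, so $\|\rho_t(t)\|_{L^\infty(\Omega)}\le\|u(t)\|_{L^\infty(\Omega)}\|\nabla\rho(t)\|_{L^\infty(\Omega)}$, and one substitutes (\ref{eq:uniqs1}).

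A slightly more computational alternative, avoiding the flow, is to differentiate the mass equation in space: $w:=\nabla\rho$ solves $\partial_tw+u\cdot\nabla w=-(\nabla u)^{\mathsf T}w$, whence $\tfrac12\partial_t|w|^2+\tfrac12\, u\cdot\nabla|w|^2\le\|\nabla u(t)\|_{L^\infty(\Omega)}|w|^2$; multiplying by $|w|^{p-2}$, integrating over $\Omega$ (the transport term drops since $\operatorname{div}u=0$ and $u=0$ on $\partial\Omega$), and applying Grönwall gives $\|\nabla\rho(t)\|_{L^p(\Omega)}\le\|\nabla\rho_0\|_{L^p(\Omega)}\exp(\int_0^t\|\nabla u\|_{L^\infty(\Omega)})$, and one lets $p\to\infty$. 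The main obstacle, common to both routes, is that at the a priori regularity available ($\rho$ merely a strong solution, with $\rho_t$ only in $L^2(0,T;W^{-1,2}(\Omega))$ before the extra hypothesis (\ref{hyp:gra:ro}) is exploited) these manipulations are not literally justified and must be carried out on a regularization: mollifying $u$, solving for a smooth $\rho^{\E}$, deriving the bounds with constants independent of $\E$, and passing to the limit, or equivalently invoking the DiPerna--Lions commutator estimate to handle the error produced by mollifying the equation in space. Since the continuity equation is identical for the nonhomogeneous Navier--Stokes and Navier--Stokes--Voigt systems, the corresponding results of Ladyzhenskaya--Solonnikov, Lions and Desjardins (see \cite{LS:1978,Lions:1996,Desjardins:1997}) apply without change, and the role of hypothesis (\ref{u:C(0,T;W1-infty)}) is precisely to make the right-hand sides of (\ref{eq:uniqs1})--(\ref{eq:uniqs2}) finite.
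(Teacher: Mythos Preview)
Your proposal is correct and in fact more detailed than what the paper provides: the paper does not give its own proof but simply refers the reader to Ladyzhenskaya and Solonnikov~\cite[Lemma~1.3]{LS:1978}, noting (as you do) that the continuity equation is the same for the nonhomogeneous Navier--Stokes and Navier--Stokes--Voigt systems. Your Lagrangian argument via the flow $X(t,x)$ and the Jacobian bound is precisely the classical method used in~\cite{LS:1978}, and your alternative $L^p$-to-$L^\infty$ route, together with the remark on regularization via DiPerna--Lions, is a standard and equally valid variant; either recovers the stated estimates with the $\sqrt{d}$ factor arising exactly as you indicate.
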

\begin{proof}
We address the proof of Proposition~\ref{lemma:LS} to Ladyzhenskaya and Solonnikov~\cite[Lemma~1.3]{LS:1978}.
\end{proof}

In the Navier-Stokes-Voigt setting, note that, in view of (\ref{eq:uniq}) below, assumption (\ref{u:C(0,T;W1-infty)}) can be shown to be satisfied if one assume that, in addition to (\ref{hyp:gra:ro}), (\ref{eq:th1})-(\ref{eq:th11}) and (\ref{main:a:uniq}) are also verified.
By assuming these hypotheses are satisfied altogether, one can show the boundedness of $\|\nabla\rho(t)\|_{L^\infty(\Omega)}$ and $\|\rho_t(t)\|_{L^\infty(\Omega)}$, which in a certain sense can replace (\ref{eq:uniqs1}) and (\ref{eq:uniqs2}) in the Navier-Stokes-Voigt case
(see (\ref{eq:uniq+}) below).

The next result shows us how a higher regularity of the solutions depends on the smoothness of the problem data $\rho_0$, $u_0$ and $f$.

\begin{proposition}\label{th:A1}
Let $(u,\rho,p)$ be a solution of the problem (\ref{NSV:inc})-(\ref{NSV:u:bc}) in the conditions of Theorem~\ref{thm:e:strong:2}.
If, in addition to (\ref{hyp:gra:ro}), is verified (\ref{eq:th1})-(\ref{eq:th11}) and (\ref{main:a:uniq}),
then there exist positive constants $K_9$ and $K_{10}$ such that
\begin{alignat}{2}
&
\label{eq:uniq}
\sup_{t\in\left[0,T\right]}\left(\left\Vert D^2u(t)\right\Vert_{L^r(\Omega)}^2 +
\left\Vert\nabla u(t)\right\Vert_{L^\infty(\Omega)}^2\right)+
\left\Vert\nabla p\right\Vert_{L^{2}\left(0,T;L^{r}(\Omega)\right)}^2 \leq K_9, && \\
&
\label{eq:uniq+}
\sup_{t\in[0,T]}\left(\|\nabla \rho\|_{L^\infty(\Omega)}^2+\|\rho_{t}\|_{L^\infty(\Omega)}^2\right)\leq K_{10}. &&
\end{alignat}
\end{proposition}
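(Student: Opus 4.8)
\emph{Proof plan.} The plan is to recast the momentum equation (\ref{NSV:mom}) as a stationary Stokes problem for the combination $w:=u+\sigma u_t$, with $\sigma:=\kappa/\mu$, to apply the elliptic $L^r$-theory of Lemma~\ref{Lem:reg:St}, to recover $D^2u$, $\nabla u$ and $\nabla p$ from it, and finally to obtain the density bound (\ref{eq:uniq+}) by inserting (\ref{eq:uniq}) into Proposition~\ref{lemma:LS}.

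First I would use (\ref{NSV:mass}) together with $\operatorname{div}u=0$ to write $(\rho u)_t+\operatorname{div}(\rho u\otimes u)=\rho u_t+\rho(u\cdot\nabla)u$, so that (\ref{NSV:mom}) becomes $-\mu\Delta w+\nabla p=F$ a.e.\ in $Q_T$, with $F:=\rho f-\rho u_t-\rho(u\cdot\nabla)u$. Since $u$ and $u_t$ are divergence-free and vanish on $\Gamma_T$, for a.e.\ $t$ the pair $(w(t),p(t))$ solves the Stokes system (\ref{Stokes:inc})-(\ref{Stokes:bc}) with datum $F(t)$; as $\partial\Omega\in C^2$ and $1<r<\infty$ by (\ref{main:a:uniq}), Lemma~\ref{Lem:reg:St} and the estimate (\ref{Lap:est:r}) give
\[
\|D^2w(t)\|_{L^r(\Omega)}+\|\nabla p(t)\|_{L^r(\Omega)}\leq C\,\|F(t)\|_{L^r(\Omega)}.
\]

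The key step is to show $F\in L^2(0,T;L^r(\Omega))$ using only the regularity already furnished by Theorem~\ref{thm:e:strong:2}. Since $\rho\leq M^\ast$ by (\ref{est:rho}), I would split $\|F\|_{L^r(\Omega)}\leq M^\ast\big(\|f\|_{L^r(\Omega)}+\|u_t\|_{L^r(\Omega)}+\|u\|_{L^\infty(\Omega)}\|\nabla u\|_{L^r(\Omega)}\big)$ and then handle each piece: $\|f\|_{L^r(\Omega)}\in L^2(0,T)$ by (\ref{eq:th11}); because $r\leq 2^\ast$, $\|u_t(t)\|_{L^r(\Omega)}\leq C\|\nabla u_t(t)\|_{L^2(\Omega)}$, which lies in $L^2(0,T)$ by item~(3) of Theorem~\ref{thm:e:strong:1}; because $d\leq 3$, $\|u\|_{L^\infty(\Omega)}\leq C\|u\|_{H^2(\Omega)}$, uniformly bounded in $t$ by Theorems~\ref{thm:e:strong:1} and \ref{thm:e:strong:2}; and, again because $r\leq 2^\ast$, $\|\nabla u\|_{L^r(\Omega)}\leq C(\|\nabla u\|_{L^2(\Omega)}+\|D^2u\|_{L^2(\Omega)})$, likewise uniformly bounded in $t$. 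This already yields $\|\nabla p\|_{L^2(0,T;L^r(\Omega))}^2\leq K$. To isolate $D^2u$ from $D^2w$, I would observe that $u$ solves the $L^r(\Omega)$-valued linear ODE $u_t+\tfrac1\sigma u=\tfrac1\sigma w$, $u(0)=u_0$, hence $D^2u(t)=e^{-t/\sigma}D^2u_0+\tfrac1\sigma\int_0^te^{-(t-s)/\sigma}D^2w(s)\,ds$ and so $\|D^2u(t)\|_{L^r(\Omega)}\leq\|D^2u_0\|_{L^r(\Omega)}+C\sqrt{T}\,\|F\|_{L^2(0,T;L^r(\Omega))}$, finite by (\ref{eq:th1}); then (\ref{Sob:ineq:2:D2}) with $r>d$ (so that $r^\ast=\infty$) gives $\|\nabla u(t)\|_{L^\infty(\Omega)}\leq C\|D^2u(t)\|_{L^r(\Omega)}$. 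Taking suprema over $t\in[0,T]$ yields (\ref{eq:uniq}).

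For (\ref{eq:uniq+}) I would first deduce from the Stokes bound that $\sigma\|D^2u_t\|_{L^r(\Omega)}\leq\|D^2w\|_{L^r(\Omega)}+\|D^2u\|_{L^r(\Omega)}\leq C\|F\|_{L^r(\Omega)}+\|D^2u\|_{L^r(\Omega)}\in L^2(0,T)$, so that $u\in L^\infty(0,T;W^{2,r}(\Omega))$ and $u_t\in L^2(0,T;W^{2,r}(\Omega)\cap W^{1,r}_0(\Omega))$; consequently $u\in C([0,T];W^{2,r}(\Omega))\hookrightarrow C([0,T];W^{1,\infty}(\Omega))$ since $r>d$, i.e.\ (\ref{u:C(0,T;W1-infty)}) holds. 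Proposition~\ref{lemma:LS} then applies, and since $\int_0^t\|\nabla u(s)\|_{L^\infty(\Omega)}\,ds\leq T\sqrt{K_9}$ while $\|u(t)\|_{L^\infty(\Omega)}\leq C\|u(t)\|_{H^2(\Omega)}$ is uniformly bounded, estimates (\ref{eq:uniqs1})-(\ref{eq:uniqs2}) become (\ref{eq:uniq+}); here $K_9$ depends on $M^\ast$, $\mu$, $\kappa$, $d$, $\Omega$, $T$, the norms of $u_0$ and $f$ entering (\ref{eq:th1})-(\ref{eq:th11}) and the constants $K_1,\dots,K_4$, while $K_{10}$ depends in addition on $\|\nabla\rho_0\|_{L^\infty(\Omega)}$ and $K_9$. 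I expect the main obstacle to be exactly the verification $F\in L^2(0,T;L^r(\Omega))$: it is the terms $\rho u_t$ and $\rho(u\cdot\nabla)u$ that dictate the restriction $d<r\leq 2^\ast$ in (\ref{main:a:uniq}), and a secondary subtlety is that one must recover $D^2u$ itself, and not merely $w=u+\sigma u_t$, through the integrating-factor argument. No Grönwall loop is needed, because every term on the right-hand side is already controlled by Theorem~\ref{thm:e:strong:2} — which is precisely the gain provided by the relaxation term $\kappa\Delta u_t$ relative to the nonhomogeneous Navier--Stokes case.
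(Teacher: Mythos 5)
Your proposal is correct, and it is worth saying how it sits relative to the paper's own argument. The paper proves this proposition essentially by citation: the $L^r$ bounds (\ref{eq:uniq:0})--(\ref{eq:uniq+:0}) are quoted from \cite[Theorem~4]{AOKh:2021:Nonl}, and the statement then follows by converting $\Delta u$ into $D^2u$ and $\nabla u$ into $\|\nabla u\|_{L^\infty}$ via (\ref{Sob:ineq:Lap:D2}), (\ref{Sob:ineq:2:D2}) and (\ref{Sob:ineq:gen}), which is exactly where the restriction $d<r\leq 2^\ast$ enters. What you do is reconstruct the content of that cited theorem directly: your device of reading (\ref{NSV:mom}) as a stationary Stokes problem for $w=u+\sigma u_t$, $\sigma=\kappa/\mu$, with datum $F=\rho f-\rho u_t-\rho(u\cdot\nabla)u$, and invoking Lemma~\ref{Lem:reg:St}, is the same mechanism the paper itself uses at the Galerkin level in (\ref{St:inc:n})--(\ref{St:bc:n}) and (\ref{Stokes:est:r:w}), only carried out in $L^r$ rather than $L^2$; your verification that $F\in L^2(0,T;L^r(\Omega))$ from items (1)--(3) of Theorems~\ref{thm:e:strong:1}--\ref{thm:e:strong:2} correctly isolates where (\ref{eq:th11}) and $r\leq 2^\ast$ are needed, and the integrating-factor step $D^2u(t)=e^{-t/\sigma}D^2u_0+\tfrac1\sigma\int_0^te^{-(t-s)/\sigma}D^2w(s)\,ds$ (which is where (\ref{eq:th1}), i.e. $u_0\in W^{2,r}(\Omega)$, is consumed) is a clean way to pass from $w$ back to $u$ that the paper leaves implicit in the cited reference. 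Your route to (\ref{eq:uniq+}) — establish (\ref{u:C(0,T;W1-infty)}) from $u\in L^\infty(0,T;W^{2,r}(\Omega))$, $u_t\in L^2(0,T;W^{2,r}(\Omega))$ with $r>d$, then apply Proposition~\ref{lemma:LS} — is precisely what the paper's remark after Proposition~\ref{lemma:LS} sketches. What your version buys is self-containedness (the cited theorem was stated for strictly positive initial density, though its proof does not use positivity, a point the paper passes over); what the paper's version buys is brevity. Two small points you should make explicit if you write this up: (i) the identification of $(w(t),p(t))$ with the unique $W^{2,r}\times W^{1,r}$ solution furnished by Lemma~\ref{Lem:reg:St} uses uniqueness of the Stokes problem in the class $H^2(\Omega)\times H^1(\Omega)$ (legitimate since $F(t)\in L^r(\Omega)\subset L^2(\Omega)$ for $r>d\geq 2$, $d\leq 3$, and $p$ is only recovered up to a constant, which is harmless for $\nabla p$); and (ii) the interchange of $D^2$ with the time integral in the Duhamel formula should be justified by Minkowski's integral inequality, which is immediate given $D^2w\in L^2(0,T;L^r(\Omega))$.
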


\begin{proof}
Using the assumptions (\ref{hyp:gra:ro}) and (\ref{eq:th1})-(\ref{eq:th11}), we have proved in \cite[Theorem~4]{AOKh:2021:Nonl} that
\begin{alignat}{2}
&
\label{eq:uniq:0}
\sup_{t\in\left[0,T\right]}\left(\left\Vert {\Delta  }u(t)\right\Vert_{L^r(\Omega)}^2 +
\left\Vert\nabla u(t)\right\Vert_{C^{0,\alpha}(\overline{\Omega})}^2\right)+
\left\Vert\nabla p\right\Vert_{L^{2}\left(0,T;L^{r}(\Omega)\right)}^2 \leq C_1, && \\
&
\label{eq:uniq+:0}
\sup_{t\in[0,T]}\left(\|\nabla \rho\|_{L^\infty(\Omega)}^2+\|\rho_{t}\|_{L^\infty(\Omega)}^2\right)\leq C_2 &&
\end{alignat}
for some positive constants
$C_1=C\left(\mu,\kappa,M^\ast,d,\Omega,T,\|\nabla u_{0}\|_{L^2(\Omega)},\|\Delta u_{0}\|_{L^r(\Omega)},
\|f\| _{L^{2}(0,T;L^{r}(\Omega))}\right)$
and
$C_2=C\left(\mu,\kappa,M^\ast,d,\Omega,T,\|\nabla \rho_0\|^2_{L^\infty(\Omega)},\|\nabla u_0\|_{L^2(\Omega)},
\|\Delta u_0\|_{L^2(\Omega)},\|f\| _{L^{2}(0,T;L^{r}(\Omega))}\right)$.
The restriction (\ref{main:a:uniq}) results by the application of the Sobolev inequalities (\ref{Sob:ineq:gen}) and (\ref{Sob:ineq:2:D2}) as follows,
\begin{alignat*}{2}
&
\left\|u_t(t)\right\|_{L^{r}(\Omega)}\leq C(r,d,\Omega)\left\|\nabla u_t(t)\right\|_{L^{2}(\Omega)},\quad r\leq 2^\ast, && \\
&
\left\|\nabla u(t)\right\|_{L^{\infty}(\Omega)}\leq C(r,d)\left\|D^2 u(t)\right\|_{L^{r}(\Omega)},\quad r>d. &&
\end{alignat*}
It is immediate now that (\ref{eq:uniq}) and (\ref{eq:uniq+}) is a consequence of the estimates (\ref{eq:uniq:0})-(\ref{eq:uniq+:0}) by the application of (\ref{Sob:ineq:Lap:D2}).
\end{proof}

We are now in conditions to prove Theorem~\ref{thm:u:strong}.
In~\cite[Theorems~4-5]{{AOKh:2021:Nonl}} we already have proved a uniqueness result in the case of a strictly positive initial density and under the same assumptions (\ref{hyp:gra:ro}), (\ref{eq:th1})-(\ref{eq:th11}) and (\ref{main:a:uniq}).
The issue of wether or not the initial density may vanish in a subdomain of $\Omega$ does not matter for this result.
Therefore, we can prove Theorem~\ref{thm:u:strong} telegraphically, addressing the details to \cite{{AOKh:2021:Nonl}}.

\begin{proof} (Theorem~\ref{thm:u:strong})
Let $(\hat{u},\hat{p},\hat{\rho})$ and $(\overline{u},\overline{p},\overline{\rho})$  be two solutions of the problem (\ref{NSV:inc})-(\ref{NSV:u:bc}) with the same data.
By algebraic manipulations of the equations (\ref{NSV:inc})-(\ref{NSV:u:bc}), satisfied by each of these two couple of solutions, one has
\begin{alignat}{2}
& \label{uniq-3}%
\hat{\rho} u_t + \hat{\rho}\left(\hat{u}\cdot\nabla\right)u - \mu\Delta u- \kappa\Delta u_t+\nabla p =
\rho \left[f-\overline{u}_t-\left(\overline{u}\cdot\nabla\right)\overline{u}\right]-\hat{\rho}\left(u\cdot\nabla\right)\overline{u}
&& \\
& \label{uniq-5}
\rho_t + \nabla{\hat{\rho}}\cdot u + \nabla\rho\cdot\hat{u} = 0, && \\
& \label{eq:diver}%
\operatorname{div} u=0, &&
\end{alignat}
where $u=\hat{u}-\overline{u}$, $p=\hat{p}-\overline{p}$ and $\rho=\hat{\rho}-\overline{\rho}$.
Multiplying \eqref{uniq-3} by $u$, next integrating the resulting identity over $\Omega$ and using \eqref{eq:diver}, we have
\begin{equation}\label{uniq-4}
\begin{split}
&
\frac{1}{2}\frac{d}{dt}\left(\left\Vert\sqrt{\hat{\rho}(t)} u(t)\right\Vert^2_{L^2(\Omega)}
+
\kappa \left\Vert\nabla u(t) \right\Vert^2_{L^2(\Omega)}\right)
+
\mu\left\Vert\nabla u(t) \right\Vert^2_{L^2(\Omega)} = \\
&
\int_{\Omega}\Big(\rho(t)\left[f(t)-\overline{u}_t(t)-\left(\overline{u}(t)\cdot\nabla\right)\overline{u}(t)\right]
-
\hat{\rho}(t)\left(u(t)\cdot\nabla\right)\overline{u}(t)\Big)\cdot u(t) dx
\end{split}
\end{equation}
Multiplying now (\ref{uniq-5}) by $\rho$, integrating the resulting equation over $\Omega$, and using (\ref{eq:diver}), we also have
\begin{equation}\label{uniq-6}
\begin{split}
\frac{1}{2}\frac{d}{dt}\|\rho(t)\|^2_{L^2(\Omega)}=
-\int_{\Omega}\rho(t)(u(t)\cdot \nabla)\overline{\rho}(t)dx.
\end{split}\end{equation}
Adding up (\ref{uniq-4}) and (\ref{uniq-6}), we have
\begin{equation}\label{uniq-7}
\begin{split}
&\frac{1}{2}\frac{d}{dt}\left(\left\Vert \sqrt{\hat{\rho}(t)}u(t)\right\Vert^2_{L^2(\Omega)}+\kappa\left\Vert \nabla u(t)\right\Vert^2_{L^2(\Omega)}+\|\rho(t)\|^2_{L^2(\Omega)}\right)+\mu\left\Vert \nabla u(t)\right\Vert^2_{L^2(\Omega)}=\\
&
\int_{\Omega}\Big(\rho(t)\left[f(t)-\overline{u}_t(t)-\left(\overline{u}(t)\cdot\nabla\right)\overline{u}(t)\right]
-
\hat{\rho}(t)\left(u(t)\cdot\nabla\right)\overline{u}(t)\Big)\cdot u(t) dx
- \int_{\Omega}\rho(t)(u(t)\cdot \nabla)\overline{\rho}(t)dx
\end{split}\end{equation}
for all $t\in[0,T]$.
To estimate the r.h.s. terms of (\ref{uniq-7}), we proceed as we did in the proof of \cite[Theorem~5]{AOKh:2021:Nonl}, using in particular
(\ref{hyp:gra:ro}), (\ref{est:rho}) and (\ref{eq:uniq})-(\ref{eq:uniq+}).
Proceeding so, we obtain
\begin{equation}\label{uniq-8}
\begin{split}
&
\frac{d}{dt}\left(\left\Vert \sqrt{\hat{\rho}(t)}u(t)\right\Vert^2_{L^2(\Omega)}+\kappa\left\Vert \nabla u(t)\right\Vert^2_{L^2(\Omega)}+\|\rho(t)\|^2_{L^2(\Omega)}\right) \leq \\
&
A(t)\left(\left\Vert \sqrt{\hat{\rho}(t)}u(t)\right\Vert^2_{L^2(\Omega)}+\kappa\left\Vert \nabla u(t)\right\Vert^2_{L^2(\Omega)}+\|\rho(t)\|^2_{L^2(\Omega)}\right)
\end{split}
\end{equation}
for all $t\in[0,T]$ and for some function $A$ that, in view of Theorem~\ref{thm:e:strong:2}, (\ref{hyp:gra:ro}), (\ref{est:rho}), and (\ref{eq:uniq})-(\ref{eq:uniq+}), can be proven to belong to $L^1(0,T)$.
Therefore we can apply the Grönwall inequality to (\ref{uniq-8}), which will finally allow us to conclude that $\hat{\rho}=\overline{\rho}$ and $\hat{u}=\overline{u}$.
\end{proof}

 \section*{Acknowledgments}
All authors were supported by the Grants no. AP19676624 of
the Ministry of Education and Science of the Republic of Kazakhstan (Kazakhstan).
The first author was also partially supported by the Portuguese Foundation for Science and Technology (FCT) under the project no. UIDB/04561/2020 (Portugal).

\end{document}